\documentclass[12pt]{article}

\usepackage{layout, graphicx, amsmath, amsthm, amssymb, euscript, enumerate, bbold, bbm, graphicx, fancyhdr,xcolor}

\usepackage[pdftex,breaklinks,pdfpagemode={None},pdfstartview={FitH},
pdfview={FitH},hypertexnames={false},pdfborder={0 0 0},colorlinks={false},
linkcolor={blue},citecolor={blue},urlcolor={blue}]{hyperref}


\setlength{\textwidth}{17.4cm} 
\hoffset-1cm
\setlength{\oddsidemargin}{0.6cm} 
\setlength{\evensidemargin}{0.6cm}
\setlength{\textheight}{23cm}
\voffset-1cm
\setlength{\topmargin}{-0.54cm}
\setlength{\headsep}{1.5cm}	
\setlength{\footnotesep}{0.6cm} 


\newcommand{\footremember}[2]{
   \footnote{#2}
    \newcounter{#1}
    \setcounter{#1}{\value{footnote}}
}
\newcommand{\footrecall}[1]{
    \footnotemark[\value{#1}]
} 
\setlength{\footnotesep}{0.35cm}  

\title{On branching process with rare neutral mutation\thanks{This research has been supported by CONACyT and CONACyT-CNRS Laboratorio Internacional Solomon Lefschetz.}}
\author{Airam Blancas Ben\'itez \footremember{cimat}{Centro de Investigaci\'on en Matem\'aticas (CIMAT A.C.). Calle Jalisco s/n, Col. Valenciana, 36240 Guanajuato, Gto. M\'exico. E-mail: airam@cimat.mx (Airam Blancas Ben\'itez), rivero@cimat.mx (V\'ictor Rivero).} \and V\'ictor Rivero\footrecall{cimat}}
\date{}

\newtheorem{theorem}{Theorem}[section]
\newtheorem{corollary}[theorem]{Corollary}

\newtheorem{lemma}[theorem]{Lemma}
\newtheorem{proposition}[theorem]{Proposition}
\makeatletter                                         
\def\th@newremark{\th@remark\thm@headfont{\bfseries}}  
\makeatletter                                          

\theoremstyle{newremark}                               
\newtheorem{remark}[theorem]{Remark}

\begin{document}

\numberwithin{equation}{section}

\maketitle

\begin{abstract}
In this paper we study the genealogical structure of a Galton-Watson process with neutral mutations, where the initial population is large and mutation rate is small \cite{B2}. Namely, we extend in two directions the results obtained in Bertoin's work. In the critical case, we construct the version of Bertoin's model conditioned not to be extinct, and in the case with finite variance we show convergence after normalization, of allelic sub-populations towards a tree indexed CSBP with immigration. 
Besides, we establish the version of the limit theorems in \cite{B2}, been for the unconditioned process and for the process conditioned to non-extinction, in the case where the reproduction law has infinite variance and it is in the domain of attraction of an $\alpha$-stable distribution. \end{abstract}
\smallskip
\noindent  \textbf{Keywords.} Branching process; Neutral mutations; Q-processes; Regular variation; Domain of attraction of $\alpha$-stable laws. 

\section{Introduction models and main results}
A Galton-Watson process models a population where at every generation each individual reproduces according to the same distribution, independently of the others and then dies. A number of variants, involving different types of conditioning and limit theorems, are core of branching processes theory. For instance, when the process dies with probability one, Yaglom (1947) proved that the distribution of the process conditioned to non-extinction exists, under some assumptions on the moments of the reproduction law. The proof was simplified and the moment assumptions removed by Joffe (1967),  Seneta and Vere-Jones (1967). More generally, Lamperti and Ney (1968) introduced the $Q$-process, conditioning on not being extinct in the distant future and being extinct in the even more distant future. For background about branching processes we refer to \cite{Athreya}, \cite{Amaurynotes} and \cite{Z.Li}.

As an extension of the Galton-Watson model, Bertoin \cite{B2} studied the so called Galton-Watson process with neutral mutations; this emerges assuming that the mutations modify the genotype of individuals but not the dynamic of a population modeled by a standard Galton-Watson. Since mutations appear in the ancestral lines of the population, each individual begets children that do not necessarily inherit its genetic type (allele). In addition, we suppose that the population has infinity alleles, that is, each mutation event originates a different allele. We denote the size of a typical family size by $\xi^{(+)} := \xi^{(c)}+\xi^{(m)}$, where $\xi^{(c)}$, $\xi^{(m)}$ are non-negative random variables which determine the number of clones and mutants children of a typical individual, respectively. We exclude the degenerate cases $\xi^{(c)}\equiv0$ or $\xi^{(m)}\equiv0$. 

In \cite{B2}, Bertoin established asymptotic features on the genealogy of allelic sub-families in a Galton-Watson process with neutral mutations. In his development, the genealogy of the population is described by a planar rooted tree where the mutations are represented by marks in the edges between parents and mutant childrens. The vertices with $n$ marks in their ancestral line are associated with the called $n$-type individuals; those individuals with parent of $(n-1)$-type are known mutants of the $n$-type. We denote by $T_n$ the total population of individuals of the $n$-th type and by $M_n$ the total number of mutants of $n$-th type, with the convention that mutants of the $0$-th type are the ancestors, that is $M_0=a$, $\mathbb{P}_a$-c.s. Observe that for every $n$ each mutant of the $n$-type begets a sub-family, which is independent of the others and has the same behavior that the original tree. In fact, this is a consequence of the general branching property, which states that conditionally on the set of children of a stopping line, the families that those beget are independent copies of the initial tree. The concept of stopping line was introduced formally by Chauvin \cite{Chauvin}. Namely, a line is a family of edges such that every branch from the root contains at most one edge in that family. One may think that a stopping line is a random line such that the event, ``an edge is in the line'', only depends on the marks found on their ancestral line. An important consequence from the general branching property is the following lemma.
\begin{lemma}[Bertoin \cite{B2}, Lemma 1] \label{L1B2}
Under $\mathbb{P}_a$ 
$$\{ M_n: n\in\mathbb{Z}_+  \},$$
is a Galton-Watson process with reproduction law $\mathbb{P}_1(M_1\in \cdot)$. More generally, $$\{ (T_n,M_{n+1}): n\in\mathbb{Z}_+  \},$$ is a Markov chain, with transition probabilities
\begin{equation}\label{probatranTM}
\mathbb{P}_a(T_n=k,M_{n+1}=l\left| \right. T_{n-1}=i,M_{n}=j ) = \mathbb{P}_j (T_0=k,M_1=l), \quad i,j, k, l\in\mathbb{Z}_+ \text{ and } j\leq k.
\end{equation}
\end{lemma}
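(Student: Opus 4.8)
The plan is to realise the collection of type-$n$ mutants as a stopping line in the sense of Chauvin and then invoke the general branching property recalled above. First I would check that the set $\mathcal{L}_n$ of edges whose lower endpoint is a mutant of the $n$-th type is indeed a stopping line: along any branch issued from the root the marks accumulate, so the $n$-th marked edge (when it exists) is unique, whence each branch meets $\mathcal{L}_n$ in at most one edge; moreover membership of an edge in $\mathcal{L}_n$ is decided solely by the number of marks found on its ancestral line, which is exactly the measurability required of a stopping line.

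The key structural observation is that every individual of type $n$ is a clonal descendant of exactly one mutant of the $n$-th type (its most recent marked ancestor at level $n$), and likewise every mutant of type $n+1$ descends from exactly one mutant of type $n$. Consequently, if $\tau^{(1)},\dots,\tau^{(M_n)}$ denote the subtrees rooted at the $M_n$ mutants of type $n$, then $T_n=\sum_i \theta_0(\tau^{(i)})$ and $M_{n+1}=\sum_i \mu_1(\tau^{(i)})$, where $\theta_0(\tau)$ and $\mu_1(\tau)$ stand, respectively, for the size of the type-$0$ population and the number of type-$1$ mutants of a tree $\tau$.

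Next I would introduce the $\sigma$-field $\mathcal{F}_n$ generated by the portion of the tree lying above $\mathcal{L}_n$. Since all individuals of type at most $n-1$, together with the marked edges defining the type-$n$ mutants, lie above $\mathcal{L}_n$, the variables $T_0,M_1,\dots,T_{n-1},M_n$ are all $\mathcal{F}_n$-measurable; in particular the cardinality $M_n$ of the line is $\mathcal{F}_n$-measurable. The general branching property then asserts that, conditionally on $\mathcal{F}_n$ (which records in particular the set of type-$n$ mutants spanned by $\mathcal{L}_n$), the subtrees $\tau^{(1)},\dots,\tau^{(M_n)}$ are independent copies of the original tree issued from a single individual. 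Combining this with the above representation of $(T_n,M_{n+1})$ as sums over the subtrees and with the ordinary branching property of the tree started from $M_n$ ancestors, I obtain
$$\mathbb{P}_a\big((T_n,M_{n+1})=(k,l)\,\big|\,\mathcal{F}_n\big)=\mathbb{P}_{M_n}(T_0=k,M_1=l),$$
which depends on the past only through $M_n$. As $(T_{n-1},M_n)$ and the whole history of the chain are $\mathcal{F}_n$-measurable, this is precisely the Markov property together with the transition kernel \eqref{probatranTM}. Projecting this identity onto its second coordinate (equivalently, reading $M_{n+1}=\sum_{i=1}^{M_n}\mu_1(\tau^{(i)})$ as a Galton-Watson recursion with offspring law $\mathbb{P}_1(M_1\in\cdot)$ and recalling $M_0=a$) yields the first assertion.

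I expect the main obstacle to be the careful bookkeeping needed to certify that $\mathcal{F}_n$, the above-the-line information, really does contain all of $T_0,M_1,\dots,T_{n-1},M_n$ while being independent of the subtrees below the line; equivalently, to match the intuitive instruction ``condition on everything that has happened up to the appearance of the type-$n$ mutants'' with the formal hypotheses of the general branching property for the stopping line $\mathcal{L}_n$. Once that identification is secured, the remaining steps are a direct translation of the branching property into the language of the pairs $(T_n,M_{n+1})$.
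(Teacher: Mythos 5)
Your proposal is correct and follows exactly the route the paper indicates for this lemma: identifying the type-$n$ mutants as the children of a stopping line in the sense of Chauvin, applying the general branching property to get that the subtrees they beget are i.i.d.\ copies of the original tree, and reading off $(T_n,M_{n+1})$ as the corresponding sums over those subtrees. The paper itself does not reproduce the proof (it cites Bertoin), but its stated derivation via stopping lines and the general branching property is precisely your argument, including the key bookkeeping that $T_0,M_1,\dots,T_{n-1},M_n$ are measurable with respect to the information above the line while $M_n$ counts its edges.
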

\begin{remark}
Since the mutants of the $n$-th type are also individuals of the $n$-th type, the transition probability in (\ref{probatranTM}) is zero when $j > k$. 
\end{remark}
For notational convenience $\{ P^n_{(i,j),(k,l)} : i,j,k,l\in\mathbb{Z}_+  \}$ denotes the $n$-step transition probabilities of $\{ (T_n,M_{n+1}),n\in\mathbb{Z}_+ \}$, that is
\begin{equation} \label{nprobatranTM}
P^n_{(i,j),(k,l)}  = \mathbb{P}_a(T_{r+n}=k,M_{r+n+1}=l|T_{r}=i,M_{r+1}=j),  \quad i,j,k,l\in\mathbb{Z}_+, n\in\mathbb{N}. 
\end{equation}
\begin{remark}
For $ i,j,k,l\in\mathbb{Z}_+$,  $ P^n_{(i,j),(k,l)}$ depends only on the second coordinate, actually  is not difficult to prove using induction that 
\begin{equation}\label{probatranTMexp}
P^n_{(i,j),(k,l)} 
 = \sum_{j_{n-1}=1}^\infty \mathbf{P}^{n-1}_{(j,j_{n-1})} \mathbb{P}_{j_{n-1}}(T_0=k,M_1=l),
\end{equation}
where $j_0=j$ and $\{ \mathbf{P}^{n} _ {(i,j)} : i,j \in\mathbb{Z}_+ \}$ denotes the $n$-step transition probabilities of $\{ M_{n},n\in\mathbb{Z}_+ \}$. 
\end{remark}
Let $\mathbb{U}$ be the finite sequence of integers
$$\mathbb{U}:=\bigcup_{k\in\mathbb{Z}_+}\mathbb{N}^k,$$ 
where $\mathbb{N}=\{ 1,2,...\}$ and $\mathbb{N}^0= \{ \varnothing \}$. We consider a locally finite rooted tree $\mathcal{A}=\{ \mathcal{A}_u:u\in\mathbb{U} \}$ called tree of alleles. According to Ulam-Harris-Neveu labeling the root is $\varnothing$, a vertex at its level $k>0$ is $u=(u_1,...,u_k)$ and $uj=(u_1,...,u_k,j)$ is the $j$-th children. We denote by $|u|$ the level of the vertex $u$. A tree $\mathcal{A}$, is constructed recursively in \cite{B2} taking $\mathcal{A}_\varnothing=T_0$ and $\mathcal{A}_{uj}$ as the size of the $j$-th allelic sub-population of the type $|u|+1$ which descend from the allelic sub-family indexed by the vertex $u$, with the convention that in the case of ties, sub-families are ordered uniformly at random. Since the transition probabilities of the chain $\{ (T_n,M_{n+1}): n\in\mathbb{Z} \}$ depend only on the second coordinate, the tree of alleles enjoy a kind of branching property, the formal statement is given in the following result, where the notation $(d_u\downarrow)$ means that the $d_u$-tuple has been rearranged in the decreasing order of the first coordinate, by convention, in the case of ties, the coordinates are ranked uniformly at random. 

\begin{lemma}[Bertoin \cite{B2}, Lemma 2] \label{L2B2} 
For every integer $a\geq1$ and $k\geq 0$, under $\mathbb{P}_{a}$ conditionally on $( (\mathcal{A}_u,d_u): |u|\leq k)$, for each vertex $u$ at level $k$ with $\mathcal{A}_u>0$, the family of variables $( (\mathcal{A}_{uj},d_{uj}) : 1\leq j \leq d_u )$ are independent with distribution $(T_0,M_1)^{(d_u\downarrow)}$ under $\mathbb{P}_1$. 
\end{lemma}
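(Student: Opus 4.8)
The plan is to deduce Lemma~\ref{L2B2} from the branching property encoded in Lemma~\ref{L1B2}, exploiting the crucial observation already stressed in the excerpt that the transition probabilities $P^n_{(i,j),(k,l)}$ depend only on the second coordinate. The statement asserts that, conditionally on the first $k$ levels of the allelic tree, the sub-populations hanging from a fixed level-$k$ vertex $u$ with $\mathcal{A}_u>0$ are independent and each is distributed as the decreasing rearrangement $(T_0,M_1)^{(d_u\downarrow)}$ under $\mathbb{P}_1$. Here $\mathcal{A}_u$ records the size $T_{|u|}$ of the allelic sub-family rooted at $u$, while $d_u$ is the number of its mutant children, i.e.\ the number $M_{|u|+1}$ of type-$(|u|+1)$ mutants it begets.

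\begin{proof}[Proof sketch]
I would argue by induction on $k$, the base case $k=0$ being the definition $\mathcal{A}_\varnothing=T_0$ together with the fact that the ancestral family begets $M_1=d_\varnothing$ mutants, each founding a sub-family. For the inductive step, fix a level-$k$ vertex $u$ with $\mathcal{A}_u>0$. By construction $\mathcal{A}_u$ is the total population $T_{|u|}$ of the allelic sub-family indexed by $u$, and this sub-family originates from a single type-$|u|$ mutant. The heart of the argument is the general branching property along the stopping line consisting of the type-$|u|$ mutants: conditionally on the configuration of the first $k$ levels, this mutant founds a fresh family distributed as the whole tree started from a single individual, i.e.\ under $\mathbb{P}_1$. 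Within that fresh family the pair (total clonal population, number of first-generation mutants) is exactly $(T_0,M_1)$ under $\mathbb{P}_1$, and the $d_u$ mutant children are themselves founders of the $d_u$ independent level-$(k+1)$ sub-families attached to $u$.

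The key technical point, and where I expect the main obstacle to lie, is to convert the Markov-chain statement of Lemma~\ref{L1B2}---which concerns the \emph{aggregate} quantities $(T_n,M_{n+1})$ summed over all type-$n$ individuals---into a statement about the \emph{individual} allelic sub-families indexed by the vertices of $\mathcal{A}$. One must verify that, given $d_u$ mutants of type $|u|+1$ descending from $u$, the $d_u$ sub-families they generate are mutually independent and each has the law of $(T_0,M_1)$ under $\mathbb{P}_1$; this is precisely the content of the branching property applied to the stopping line of these mutants, each of which is a $(|u|+1)$-type founder and hence behaves as an independent copy of the root. The dependence of the transition probabilities on the second coordinate only, recorded in \eqref{probatranTMexp}, guarantees that conditioning on the full history $(\,(\mathcal{A}_v,d_v):|v|\le k\,)$ reduces to conditioning on the relevant mutant counts, so no extra information leaks from the first coordinates $\mathcal{A}_v$.

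Finally I would account for the decreasing rearrangement and the uniform tie-breaking: the children $\mathcal{A}_{uj}$ are by definition the allelic sub-population sizes listed in decreasing order of the first coordinate, with ties broken uniformly at random, which is exactly the operation $(\,\cdot\,)^{(d_u\downarrow)}$ applied to the unordered family $((\mathcal{A}_{uj},d_{uj}))_{1\le j\le d_u}$. Since the unordered sub-families are i.i.d.\ copies of $(T_0,M_1)$ under $\mathbb{P}_1$ and the reordering is a measurable, history-independent map, applying it preserves the conditional independence across distinct level-$k$ vertices $u$ (the sub-families rooted at different $u$ arise from disjoint portions of the tree) and yields the claimed distribution $(T_0,M_1)^{(d_u\downarrow)}$. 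Collecting these observations completes the inductive step and hence the proof.
\end{proof}
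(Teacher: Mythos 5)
The paper itself gives no proof of this lemma --- it is quoted verbatim from Bertoin \cite{B2}, where it is derived exactly as you propose: the general branching property applied to the stopping line of the type-$(k+1)$ mutants (each of which founds an independent copy of the tree under $\mathbb{P}_1$, whose clonal population and next-generation mutant count give an independent copy of $(T_0,M_1)$), followed by the decreasing rearrangement with uniform tie-breaking that defines the labels $\mathcal{A}_{uj}$. Your sketch is correct in outline; the only imprecisions are the passing mention of the stopping line of type-$|u|$ mutants (the relevant line for the inductive step is that of the type-$(|u|+1)$ mutants, as you correctly state later) and the suggestion that Lemma \ref{L1B2} must be ``converted'' into the individual-family statement --- in fact the argument runs directly from the branching property, of which Lemma \ref{L1B2} is itself a corollary rather than an input.
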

\noindent It is important to observe that if $d_u:=\max\{ j\geq 1: \mathcal{A}_{uj}>0 \}$, agreeing that $\max \varnothing =0$, then
\begin{equation} \label{obsnivcsbptree}
T_k=\sum_{|u|=k} \mathcal{A}_u \quad \text{and} \quad M_{k+1}=\sum_{|u|=k} d_u.
\end{equation}
Hence given a population with neutral mutations, the tree of alleles records the genealogy of allelic sub-families together with their sizes. Also, the size of their generations is a Galton-Watson process. 

Besides, we consider for every integer $n$, a Galton-Watson process started from $n$ ancestors, with a fixed reproduction law, which is critical with finite variance and mutations occur at random with rate $1/n$. The main result of \cite{B2} states that as $n$ tends to infinity, the rescaled tree of alleles $n^{-2}\mathcal{A}^{(n)}$ converges in the sense of finite dimensional distributions towards a process $\{ \mathcal{Z}_u: u\in \mathbb{U} \}$, called tree-indexed CSBP with values in $(0,\infty)$, such that the vertices on a level are distributed as the family of the atoms of a Poisson random measure conditionally on the preceding  level. This result gives another point of view to the convergence of a Galton-Watson process rescaled towards a discrete time continuous state branching process, CSBP for short.

A first goal in this paper is to construct the chain $\{ (T_n,M_{n+1}): n\in\mathbb{Z}_+ \}$ conditioned on non-extinction of the mutants, so we are interested in the situation where
\begin{equation}\label{timext}
T=\inf\{ n\geq1 :M_n=0 \}<\infty, \quad \mathbb{P}_a\text{-}c.s.
\end{equation}
That is the purpose of the following Theorem. 
\begin{theorem}\label{Tpcondextdistfut}
Let $a\in\mathbb{Z}_+$ and $\mathcal{F}_n$ the natural filtration of the process $\{ (T_{n-1}, M_n) : n\in\mathbb{N} \}$. There exits a probability measure $\mathbb{P}^\uparrow_a$ that can be expressed as a $h$-transform of $\mathbb{P}_a$ using the $(\mathcal{F}_n)$-martingale
$$Y_n=\frac{ M_n q^{M_n-a} }{ ( f'(q) )^n }\mathbf{1}_{\{n<T\}},$$
where  
$f(y)=\mathbb{E}_1( y^{M_1} )$ and $q=\mathbb{P}(0<T<\infty)$.

That is
$$d\mathbb{P}^\uparrow_a |_{\mathcal{F}_n} = \frac{Y_n}{a}\,d\mathbb{P}_a |_{\mathcal{F}_n}, \quad n\in \mathbb{N}.$$
Furthermore, $\mathbb{P}^\uparrow_a$  is the law of a Markov chain $\{ (T^\uparrow_n,M^\uparrow_{n+1}),n\in\mathbb{Z}_+ \}$ with $n$-step transition probabilities, 
\begin{equation} \label{qtrans}
Q^n_{(i,j),(k,l)}= \frac{ l q^{l-j} }{ j( f'(q) )^n }P^n_{(i,j),(k,l)}, \quad j,l\geq 1,
\end{equation}
where $\{ P^n_{(i,j),(k,l)} : i,j,k,l\in\mathbb{Z}_+  \}$ denotes the $n$-step transition probabilities of $\{ (T_n,M_{n+1}),n\in\mathbb{Z}_+ \}$.
\end{theorem}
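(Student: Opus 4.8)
The statement is a discrete-time Doob $h$-transform, so the plan is to read $Y_n$ as a space--time harmonic function of the Markov chain $\{(T_{n-1},M_n)\}$ and to verify in turn the three ingredients such a transform requires: that $Y_n/a$ is a nonnegative mean-one $(\mathcal{F}_n)$-martingale, that the resulting family of laws is consistent and extends to $\mathcal{F}_\infty$, and that the transformed dynamics are Markovian with kernel \eqref{qtrans}. Every computation will rest on Lemma~\ref{L1B2}: the conditional law of $(T_n,M_{n+1})$ given $\mathcal{F}_n$ depends on the past only through $M_n$, and $\{M_n\}$ is a Galton--Watson process with offspring generating function $f$, so that given $M_n=j$ the variable $M_{n+1}$ has generating function $s\mapsto f(s)^{j}$. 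Before anything else I would record the algebraic fact that drives the whole argument, namely $f(q)=q$: conditioning on the first generation of mutants and using the branching property (each first-type mutant starts an independent copy of the whole line), extinction occurs iff every such sub-line dies out, whence $q=\mathbb{E}_1(q^{M_1})=f(q)$ and $f'(q)$ is the relevant slope.

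The martingale check is the technical heart. On $\{n\geq T\}$ one has $M_n=0$, so $Y_n=Y_{n+1}=0$ and there is nothing to prove; thus it suffices to work on the $\mathcal{F}_n$-measurable event $\{n<T\}=\{M_1>0,\dots,M_n>0\}$, where $M_n\geq1$. Writing $\mathbf{1}_{\{n+1<T\}}=\mathbf{1}_{\{n<T\}}\mathbf{1}_{\{M_{n+1}>0\}}$ and noting that the prefactor $M_{n+1}$ already annihilates the contribution of $\{M_{n+1}=0\}$, I would compute, via Lemma~\ref{L1B2},
$$\mathbb{E}_a\!\left[M_{n+1}\,q^{M_{n+1}}\mid\mathcal{F}_n\right]=\Big(s\frac{d}{ds}f(s)^{M_n}\Big)\Big|_{s=q}=M_n\,q\,f(q)^{M_n-1}f'(q)=M_n\,q^{M_n}f'(q),$$
the last equality using $f(q)=q$. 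Dividing by $q^{a}(f'(q))^{n+1}$ and reinstating the indicator yields $\mathbb{E}_a[Y_{n+1}\mid\mathcal{F}_n]=Y_n$; since $M_0=a$ and $T\geq1$ give $Y_0=a$, one gets $\mathbb{E}_a(Y_n)=a$ for every $n$. (Integrability is immediate when $q<1$, as $x\mapsto xq^{x}$ is bounded, and follows from $f'(1)<\infty$ when $q=1$.) I expect the only real friction here to be bookkeeping: tracking the indicator $\mathbf{1}_{\{n<T\}}$ together with the absorption of $\{M_n\}$ at $0$ (so that a strictly positive coordinate forces the whole past to have stayed positive), and the one-step offset between the chain index and the martingale index.

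With the martingale established, set $d\mathbb{P}^\uparrow_a|_{\mathcal{F}_n}=(Y_n/a)\,d\mathbb{P}_a|_{\mathcal{F}_n}$. The martingale identity gives, for $A\in\mathcal{F}_n\subset\mathcal{F}_{n+1}$, the equality $\int_A (Y_{n+1}/a)\,d\mathbb{P}_a=\int_A (Y_n/a)\,d\mathbb{P}_a$, so the probability measures defined on the $\mathcal{F}_n$ are projective; as the path space is Polish, Kolmogorov's extension theorem furnishes a unique $\mathbb{P}^\uparrow_a$ on $\mathcal{F}_\infty$.

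Finally, to read off the kernel I would apply the Radon--Nikodym formula at level $\mathcal{F}_{r+n+1}$ in the numerator and $\mathcal{F}_{r+1}$ in the denominator: writing $X_r=(T_r,M_{r+1})$, for $j,l\geq1$,
$$Q^n_{(i,j),(k,l)}=\frac{\frac{1}{a}\,\mathbb{E}_a\!\left[Y_{r+n+1}\mathbf{1}_{\{X_{r+n}=(k,l),\,X_r=(i,j)\}}\right]}{\frac{1}{a}\,\mathbb{E}_a\!\left[Y_{r+1}\mathbf{1}_{\{X_r=(i,j)\}}\right]}.$$
Because $l\geq1$ (resp. $j\geq1$) forces the indicator inside $Y_{r+n+1}$ (resp. $Y_{r+1}$) to equal $1$ and replaces $Y$ by the deterministic values $lq^{l-a}/(f'(q))^{r+n+1}$ and $jq^{j-a}/(f'(q))^{r+1}$, the Markov property under $\mathbb{P}_a$ collapses the quotient to $\tfrac{lq^{l-j}}{j(f'(q))^{n}}\,P^n_{(i,j),(k,l)}$, which is \eqref{qtrans}. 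Since this kernel depends on the conditioning only through the current state, the same computation shows that $\{(T^\uparrow_n,M^\uparrow_{n+1})\}$ is Markov under $\mathbb{P}^\uparrow_a$; alternatively one may simply invoke the general principle that the $h$-transform of a Markov chain by a space--time harmonic function is again Markov.
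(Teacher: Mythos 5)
Your proposal is correct and follows essentially the same route as the paper: both establish that $Y_n$ is a martingale by differentiating the generating function of $M_{n+1}$ given $\mathcal{F}_n$ at the fixed point $q$ of $f$, and then invoke the Doob $h$-transform to obtain the Markov chain with kernel \eqref{qtrans}. The only cosmetic differences are that you verify the one-step conditional expectation directly (where the paper uses the $k$-step identity $f'_k(q)=[f'(q)]^k$) and that you spell out the bookkeeping for the indicator $\mathbf{1}_{\{n<T\}}$ and the Radon--Nikodym ratio giving \eqref{qtrans}, details the paper leaves implicit.
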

Next we ensure that the process defined in the above Theorem is distributed as $\{ (T_n,M_{n+1}): n\in\mathbb{Z} \}$ conditionally on non-extinction of mutants. Actually, thanks to a relation that we establish in Section \ref{pregf}, between the generating functions of $ (T_n,M_{n+1})$ and $M_{n}$, we can use classical methods and prove the following result. 
\begin{theorem}\label{Yaglomlimt}
Suppose that $\mathbb{E}(\xi^{(c)}) < 1$ and $\mathbb{E}(\xi^{(+)})\leq1$.
\begin{enumerate}
\item[i)]
The Yaglom limit 
\begin{equation*} \label{limyaglom}
\lim_{n\rightarrow\infty} \mathbb{P}(T_{n-1}=i,M_n=j|\,n<T<\infty),
\end{equation*}
exists and has a generating function $\widehat{\varphi}(x,y)$ such that for all $n\in\mathbb{N}$,
\begin{equation} \label{fglimyaglom}
m^n\widehat{\varphi}(x,y) = \widehat{f}(\varphi_n(x,y)) - \widehat{f}(\varphi_n(x,0)), \quad x,y\in[0,1]. 
\end{equation}
\item[ii)]
Let $a\in\mathbb{Z}_+$ and $n$ fixed. The conditional laws of the process $\{ (T_k,M_{k+1}): 0\leq k \leq n-1 \}$ under $\mathbb{P}_a(\cdot\,|n+k<T<\infty)$ converge towards the probability measure $\mathbb{P}_a^\uparrow$, in the sense that for any $n\geq0$ 
\begin{equation} \label{limdistfut}
\lim_{k\rightarrow \infty} \mathbb{P}_a(A|\,n+k<T<\infty) = \mathbb{P}_a^{\uparrow}(A), \quad \forall A\in\mathcal{F}_n.    
\end{equation}
\end{enumerate}
\end{theorem}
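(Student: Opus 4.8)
The plan is to transfer both assertions to the univariate mutant process $\{M_n\}$ by means of the generating‑function relation established in Section~\ref{pregf}. I recall the notation $\varphi_n(x,y)=\mathbb{E}_1(x^{T_{n-1}}y^{M_n})$ and let $f_n$ denote the $n$‑th iterate of $f$. Decomposing the tree of alleles at its first generation of mutants, as in Lemma~\ref{L2B2}, gives the recursion $\varphi_n(x,y)=f(\varphi_{n-1}(x,y))$, hence $\varphi_n=f_{n-1}\circ\varphi_1$; in particular $\varphi_n(1,y)=f_n(y)=\mathbb{E}_1(y^{M_n})$, $\varphi_n(x,0)=f_{n-1}(\varphi_1(x,0))$ and $f_n(0)=f_{n-1}(f(0))$. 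Under the standing hypotheses $m:=f'(1)=\mathbb{E}_1(M_1)=\mathbb{E}(\xi^{(m)})/(1-\mathbb{E}(\xi^{(c)}))\le1$ (because $\mathbb{E}(\xi^{(m)})=\mathbb{E}(\xi^{(+)})-\mathbb{E}(\xi^{(c)})\le1-\mathbb{E}(\xi^{(c)})$), so $\{M_n\}$ is (sub)critical, $q=1$, and $\{n<T<\infty\}=\{M_n>0\}$. Since the Yaglom limit does not depend on the initial number of ancestors, for part~(i) I will argue under $\mathbb{P}_1$.

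\textbf{Part (i).} I would start from the exact expression
\begin{equation*}
\mathbb{E}_1\!\left(x^{T_{n-1}}y^{M_n}\mid M_n>0\right)=\frac{\varphi_n(x,y)-\varphi_n(x,0)}{1-f_n(0)}=\frac{f_{n-1}(\varphi_1(x,y))-f_{n-1}(\varphi_1(x,0))}{1-f_{n-1}(f(0))},
\end{equation*}
and then invoke the classical subcritical Yaglom theorem (Joffe; Seneta and Vere-Jones): the limit $\widehat f(s):=\lim_{k\to\infty}\mathbb{E}_1(s^{M_k}\mid M_k>0)$ exists and is a proper probability generating function with $\widehat f(0)=0$ and $1-\widehat f(f(s))=m\,(1-\widehat f(s))$; equivalently $(1-f_k(s))/(1-f_k(0))\to 1-\widehat f(s)$. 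Dividing numerator and denominator above by $1-f_{n-1}(0)$ and letting $n\to\infty$ gives the existence of the Yaglom limit, with
\begin{equation*}
\widehat\varphi(x,y)=\frac{\widehat f(\varphi_1(x,y))-\widehat f(\varphi_1(x,0))}{1-\widehat f(f(0))}=\frac{1}{m}\big(\widehat f(\varphi_1(x,y))-\widehat f(\varphi_1(x,0))\big),
\end{equation*}
the last step using $1-\widehat f(f(0))=m(1-\widehat f(0))=m$. Identity (\ref{fglimyaglom}) would then follow by iterating the one‑variable functional equation to $1-\widehat f(f_{n-1}(s))=m^{n-1}(1-\widehat f(s))$ and inserting $\varphi_n=f_{n-1}\circ\varphi_1$, which yields $\widehat f(\varphi_n(x,y))-\widehat f(\varphi_n(x,0))=m^{n-1}\big(\widehat f(\varphi_1(x,y))-\widehat f(\varphi_1(x,0))\big)=m^n\widehat\varphi(x,y)$.

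\textbf{Part (ii).} Fix $n$ and $A\in\mathcal F_n$. Since the transitions of $\{(T_{k-1},M_k)\}$ depend only on the mutant coordinate (Lemma~\ref{L1B2}) and $0$ is absorbing for $\{M_k\}$, I would use the Markov property at time $n$ together with the inclusion $\{M_k=0\}\subset\{T<\infty\}$ to write
\begin{equation*}
\mathbb{P}_a\big(A,\,n+k<T<\infty\big)=\mathbb{E}_a\big[\mathbf 1_A\,\Psi_k(M_n)\big],\qquad \Psi_k(j)=q^{\,j}-f_k(0)^{\,j}=(q-f_k(0))\sum_{i=0}^{j-1}q^{\,j-1-i}f_k(0)^{\,i}.
\end{equation*}
Dividing by $\mathbb{P}_a(n+k<T<\infty)=\mathbb{E}_a[\Psi_k(M_n)]$ cancels the common factor $q-f_k(0)$; as $k\to\infty$ we have $f_k(0)\uparrow q$, so the remaining sum increases to $M_nq^{M_n-1}$, and monotone convergence gives
\begin{equation*}
\lim_{k\to\infty}\mathbb{P}_a(A\mid n+k<T<\infty)=\frac{\mathbb{E}_a[\mathbf 1_A\,M_nq^{M_n-1}]}{\mathbb{E}_a[M_nq^{M_n-1}]}.
\end{equation*}
Finally I would differentiate $\mathbb{E}_a(s^{M_n})=f_n(s)^a$ and evaluate at the fixed point $q$ of $f$, where $f_n(q)=q$ and $f_n'(q)=(f'(q))^n$, to obtain $\mathbb{E}_a[M_nq^{M_n-1}]=a\,q^{a-1}(f'(q))^n$; the right‑hand side above then equals $\tfrac1a\mathbb{E}_a[\mathbf 1_A\,M_nq^{M_n-a}/(f'(q))^n]=\tfrac1a\mathbb{E}_a[\mathbf 1_A Y_n]=\mathbb{P}_a^\uparrow(A)$, which is (\ref{limdistfut}). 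Here $q=1$, but the computation is valid verbatim for arbitrary $q$, matching the definition of $Y_n$ in Theorem~\ref{Tpcondextdistfut}.

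\textbf{Expected obstacles.} The heart of part~(i) is the univariate Yaglom convergence, which I would cite rather than reprove; the one genuine caveat is the critical boundary $\mathbb{E}(\xi^{(+)})=1$, where $m=1$, $\widehat f\equiv0$ on $[0,1)$ and the limit degenerates (mass escaping to infinity), so the substantive statement is the strictly subcritical regime $m<1$. In part~(ii) the delicate point is justifying the exchange of limit and expectation: the bound $\Psi_k(j)/(q-f_k(0))\le jq^{\,j-1}$ is integrable against the law of $M_n$ exactly because $q$ is a fixed point of $f$, which simultaneously supplies the normalising constant $a\,q^{a-1}(f'(q))^n$ needed to recognise the limit as the $h$‑transform $\mathbb{P}_a^\uparrow$.
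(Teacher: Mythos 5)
Your proof is correct. Part (i) follows essentially the paper's own route: the same exact expression for $\widehat\varphi_n(x,y)=(\varphi_n(x,y)-\varphi_n(x,0))/(1-f_n(0))$, reduction via $\varphi_n=f_{n-1}\circ\varphi$ to the univariate Yaglom theorem for $\{M_n\}$, the normalising factor $1/m$, and the functional equation $1-\widehat f(f(s))=m(1-\widehat f(s))$ iterated to obtain (\ref{fglimyaglom}) (the paper phrases that last step as an induction, you iterate directly; same computation). Your caveat about the critical boundary $m=1$, where $\widehat f$ degenerates, is accurate and is a point the paper passes over in silence.

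Part (ii) is where you genuinely diverge. The paper conditions at time $n$ and writes both numerator and denominator as weighted sums $\sum_j\mathbf{P}^{k}_{(\cdot,j)}q^j$ over the $k$-step transition probabilities of the mutant chain, then invokes the ratio limit theorem for transition probabilities (Athreya--Ney, Theorem 7.4), which is why hypothesis (H2) is explicitly called upon there. You instead evaluate $\mathbb{P}_{j}(k<T<\infty)$ in closed form as $q^{j}-f_k(0)^{j}$, factor out $q-f_k(0)$ from numerator and denominator, and use the monotone convergence $\sum_{i=0}^{j-1}q^{j-1-i}f_k(0)^i\uparrow jq^{j-1}$ together with $\mathbb{E}_a[M_nq^{M_n-1}]=aq^{a-1}(f'(q))^n<\infty$ to identify the limit directly as the $h$-transform; this last derivative computation is exactly the one the paper already performs in the proof of Theorem \ref{Tpcondextdistfut}. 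This is the classical Lamperti--Ney argument: it is more elementary and self-contained, avoids the local ratio limit theorem and the aperiodicity considerations it carries, whereas the paper outsources that step to a cited theorem. Both routes land on the same expression $\frac{1}{a}\mathbb{E}_a[\mathbf{1}_A Y_n]=\mathbb{P}_a^\uparrow(A)$.
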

\begin{remark}\label{eqextimmean}
In the situation $\mathbb{E}(\xi^{(c)}) < 1$, according to Corollary 1 of \cite{B2}, the hypothesis $\mathbb{E}(\xi^{(+)})\leq1$ is equivalent to the condition (\ref{timext}). 
\end{remark}

Furthermore, in Section \ref{secinter} we will see that just as in the case of Galton-Watson processes conditioned to non-extinction, the dynamic of $\{ (T_n^\uparrow,M_{n+1}^\uparrow): n\in\mathbb{Z} \}$ admits a description in terms of immigration of mutants of the $n$-th type into the population. 

Besides we investigate, in the same sense of \cite{B2} but on a complimentary class of reproduction laws, the asymptotic behavior of the population. Namely, we consider a Galton-Watson process $\{Z^{+}_n,n\in\mathbb{Z}_+\}$ such that the reproduction law
\begin{equation*}
\pi^+_k=\mathbb{P}(\xi^{(+)}=k), \quad k\in\mathbb{Z}_+, 
\end{equation*}
is critical but, instead of assumming that it has finite variance as in Bertoin's paper, we suppose that there exists $\alpha\in(1,2)$ such that, 
\begin{equation} \label{hipcolapit}
\bar{\pi}^{+}(j):= \mathbb{P}( \xi^{(+)} >j )\in RV_\infty^{-\alpha},  
\end{equation}
where $RV_\infty^{-\alpha}$ denotes the class of functions which are regularly varying with index $-\alpha$ at $\infty$, see Chapter I in \cite{Teugles} for background. Note that the case where $\alpha\in(0,1)$ is not allowed here because it contradicts the assumption $\pi^+$ is critical. 

In order to extended the main result of \cite{B2} to our setting, we use that there exists a regularly varying function $r$ with index $1/\alpha$ such that
\begin{equation} \label{rpro}
r(n)\mathbb{P}( \xi^+ >ny ) \xrightarrow[n\rightarrow\infty]{} c_\alpha y^{-\alpha}, \quad \forall y>0,
\end{equation}
where $c_\alpha= 1/\Gamma(3-\alpha)$. A proof of this fact is given in Proposition \ref{convmedidas}. Also, we assume that each child is a clone of her mother with fixed probability $1-p$ and a mutant with probability $p$, so the joint law of $(\xi^{(c)}, \xi^{(m)})$, denoted by $\pi=(  \pi_{k,l} : k,l\in\mathbb{Z}_+ )$, 
\begin{equation} \label{lawreptypnot} 
\pi_{k,l} = \mathbb{P}( \xi^{(c)}=k, \xi^{(m)}=l ), \quad k,l\geq0,
\end{equation}
satisfies
\begin{equation}\label{mut}
\pi_{k,l}=\pi_{k+l}^{+}\binom{k+l}{k}(1-p)^k p^l, \quad k,l\in\mathbb{Z}_+.
\end{equation}
In the paper \cite{B2}, it has been assumed that the number of ancestors and mutation rate have the following behavior
\begin{equation} \label{hip}
a(n) \thicksim nx \quad \text{and} \quad p(n)\thicksim cn^{-1}, \quad \text{as } n\rightarrow\infty.
\end{equation} 
where $c,x$ are some positive constants. Here we will instead assume that following behaviour
\begin{equation} \label{hipnew}
a(n)  \thicksim x r(n)p(n)  \quad \text{and} \quad p(n)\thicksim cn^{-1}, \quad \text{as } n\rightarrow\infty.
\end{equation}
In the remainder of this paper the relation $f\thicksim g$ refers to $\lim_{x\rightarrow\infty}f(x)/g(x)=1$. The notation $\Longrightarrow$ refers to convergence in distribution as $n\rightarrow\infty$ and $\mathcal{L}\left( \cdot \,, \mathbb{P}_ {a(n)}^{p(n)} \right)$ to the distribution of the process under $\mathbb{P}_{a(n)}^{p(n)}$. The result below extends to our setting the main result in \cite{B2}.

\begin{theorem} \label{Tmainest}
If (\ref{hipcolapit}) and (\ref{hipnew}) holds. Then, the following convergence holds in the sense of finite dimensional distributions
$$ \mathcal{L}\left( ( ( (r(n) )^{-1}\mathcal{A}^{(n)}_u,(r(n)p(n))^{-1}d_u ^{(n)}):u\in\mathbb{U}),\mathbb{P}_{a(n)}^{p(n)} \right) \Longrightarrow ((\mathcal{Z}^{1/\alpha}_{u},\mathcal{Z}^{1/\alpha}_{u}):u\in\mathbb{U}),  $$
where $\{\mathcal{Z}_u:u\in\mathbb{U}\}$ is a tree-indexed CSBP with reproduction measure
\begin{equation} \label{nuest} 
\nu^\alpha(dy)=c'_\alpha x^{-1-1/\alpha}dy, \quad y>0, \,  \alpha\in(1,2),
\end{equation} 
where $c'_\alpha=\alpha^{-1}/ \Gamma(1 - \alpha^{-1} )$.
\end{theorem}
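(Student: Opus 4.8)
The plan is to run an induction over the levels of the tree of alleles, using the branching property of Lemma~\ref{L2B2} to reduce the whole statement to two ingredients: the convergence of the root pair, and a single ``one--generation'' reproduction law. Concretely, it suffices to show that under $\mathbb{P}_{a(n)}^{p(n)}$ the rescaled root $((r(n))^{-1}\mathcal{A}_\varnothing^{(n)},(r(n)p(n))^{-1}d_\varnothing^{(n)})=((r(n))^{-1}T_0,(r(n)p(n))^{-1}M_1)$ converges to $(\mathcal{Z}_\varnothing^{1/\alpha},\mathcal{Z}_\varnothing^{1/\alpha})$, and that, conditionally on a family whose rescaled size converges to some $z>0$, the point measure formed by the rescaled sizes of its daughter families converges to a Poisson random measure on $(0,\infty)$ with intensity $z\,\nu^\alpha$. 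Since by Lemma~\ref{L2B2} the daughter families of distinct vertices are, conditionally on the current level, independent copies of $(T_0,M_1)$ under $\mathbb{P}_1$, assembling these two facts across finitely many levels yields the finite--dimensional convergence to the tree--indexed CSBP with reproduction measure $\nu^\alpha$ of (\ref{nuest}).

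The analytic heart is the single--ancestor law of $(T_0,M_1)$ under $\mathbb{P}_1^{p(n)}$. First I would record the \emph{diagonal} structure: conditionally on the clonal genealogy, each child produced in the clonal family is a mutant independently with probability $p(n)$, so $M_1\approx p(n)\times(\text{total offspring})\approx p(n)T_0$ when $T_0$ is large; a second--moment estimate for this thinning shows that the two rescaled coordinates share the same limit, which is why the limit is carried by the diagonal $\{y_1=y_2\}$ and appears as $(\mathcal{Z}_u^{1/\alpha},\mathcal{Z}_u^{1/\alpha})$. Next I would compute the tail of the total clonal progeny: since $\xi^{(c)}$ is subcritical with mean $1-p(n)$ and inherits the regularly varying tail of $\xi^{(+)}$ from (\ref{hipcolapit}), the Otter--Dwass identity together with a local limit theorem for the associated random walk (equivalently, the generating--function relation of Section~\ref{pregf} combined with Proposition~\ref{convmedidas}) gives the asymptotics of $\mathbb{P}_1(T_0>r(n)s)$ in the scaling fixed by (\ref{hipnew}). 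Feeding (\ref{rpro}) into this computation identifies the limiting tail with $\overline{\nu^\alpha}(s)$, up to the constants $c_\alpha=1/\Gamma(3-\alpha)$ and $c'_\alpha=\alpha^{-1}/\Gamma(1-\alpha^{-1})$.

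With the one--family asymptotics in hand, the reproduction step is a standard null--array Poisson limit theorem: a given family has $d_u\approx r(n)p(n)\,z$ daughter families, each an independent copy of $(T_0,M_1)$ under $\mathbb{P}_1$, each of rescaled size tending to $0$ in probability, and the expected number exceeding any level $s$ converging to $z\,\overline{\nu^\alpha}(s)$; hence the associated point measures converge to the claimed Poisson random measure, and the decreasing rearrangement $(d_u\downarrow)$ passes to the limit because ranking the atoms of a point configuration is a continuous operation. The root convergence is the same computation with $a(n)\sim x\,r(n)p(n)$ ancestors in place of $d_u$ daughters, the constant $x$ entering $\nu^\alpha$ through the initial value. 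An induction on the (finitely many) levels, using the conditional independence of Lemma~\ref{L2B2} and the mapping theorem to propagate conditional Poisson convergence, then delivers the finite--dimensional convergence, and the limit is recognised as the tree--indexed CSBP with reproduction measure $\nu^\alpha$.

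I expect the main obstacle to be the tail analysis of $T_0$ in the second paragraph. The difficulty is that the total progeny of the near--critical, heavy--tailed clonal process has two tail regimes---a ``critical--like'' regime and a ``one--big--jump'' regime---and one must verify that the normalisation $r(n)$ fixed by (\ref{hipnew}) places the window $t\sim r(n)s$ exactly at the crossover, so that $\mathbb{P}_1(T_0>r(n)s)$ has a nondegenerate limit (a genuine function of $s$, rather than $0$ or $\infty$). Getting this crossover, together with the exact constants, correct is the crux; once the single--family limit is established with the right normalisation, the Poissonisation and the inductive assembly are routine.
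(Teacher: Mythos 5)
Your proposal is correct in outline and shares the paper's overall architecture --- a null-array Poisson limit for the daughter families of a single vertex, followed by an induction over the levels of the tree of alleles driven by the branching property of Lemma~\ref{L2B2} (this is exactly how the paper's proofs of Lemma~\ref{nullarrayconex} and of Theorem~\ref{Tmainest} itself are organised). Where you genuinely diverge is in the analytic core, the single-ancestor law of $(T_0,M_1)$. You propose to reach the tail of $T_0$ through the Otter--Dwass identity and a local limit theorem (or the generating-function relation plus Lemma~\ref{convmedidas}), and you correctly flag this as the crux. The paper instead avoids local limit theorems entirely: it uses the first-passage representation of $(T_0,M_1)$ as $(\varsigma^{(n)}(0),\Sigma^{(n)}(0))$ for the left-continuous random walk with clonal steps (Lemma 3 of \cite{B2}), proves a functional limit theorem for the rescaled two-dimensional walk towards $((x+X_t,t):t\ge 0)$ with $X$ spectrally positive $\alpha$-stable (Proposition~\ref{convfunsta}, established via semimartingale characteristics), and then invokes the continuity of first-passage functionals (\cite{Whitt}) to get Lemma~\ref{L4estable}(i); the joint tail $\bar\nu^\alpha(s\wedge t)$ of Lemma~\ref{L4estable}(ii) is then extracted by a Laplace-transform identity rather than computed directly. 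The paper's route buys the joint convergence of the pair and the identification of the diagonal limit in one stroke (the mutant coordinate becomes the deterministic drift $t$ of the limiting walk, which is the rigorous version of your conditional thinning/second-moment argument --- note the unconditional variance of $\xi^{(m)}$ is infinite here, so the estimate must indeed be done conditionally on the clonal genealogy as you indicate); your route would require the crossover analysis you describe, plus a separate argument to couple the two coordinates. Your concern about "two tail regimes" is precisely the difficulty the first-passage/functional-limit approach sidesteps, since the normalisation $r(n)$ of (\ref{rpro}) and the ancestor scaling (\ref{hipnew}) are built directly into the convergence of the walk. Finally, note that the paper's induction over levels is carried out at the level of Laplace transforms and ranked atom sequences (the convergence (\ref{ler})), with Proposition~\ref{Pro2estable} handling the generation sizes $(T_k,M_{k+1})$ separately; your appeal to continuity of the decreasing rearrangement is the same device used to pass from the point-process convergence to (\ref{ler}).
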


Finally we establish the convergence of the finite dimensional distributions of the rescaled chain $\{T_{n},M_{n+1} \}$, conditioned to non-extinction of mutants, towards a continuous state branching process with immigration in discrete time.
\begin{theorem} \label{T1}
If the reproduction law is critical, there exist sequences $b_1(n)$ and $b_2(n)$ such that the following joint convergence in the sense of finite dimensional distributions hold: 
\begin{equation*}
\mathcal{L}\left( ( b_1(n)T_{k-1}, b_2(n) M_{k} ) : k\in\mathbb{Z}_+ ) ,\mathbb{P}_{a(n)}^{p(n)\uparrow} \right)
\Longrightarrow 
\left( ( Y_k,\beta Y_k ) : k\in\mathbb{Z}_+ \right),
\end{equation*}
where $\{Y_k:k\in\mathbb{Z}_+\}$ is a CSBP with immigration such that
\begin{enumerate}
\item [a)] when the reproduction law has finite variance  $\sigma^2$ and (\ref{hip}) holds, then its reproduction measure is
\begin{equation} \label{fdpnu}
\nu(dy)=\frac{c}{\sqrt{2\pi\sigma^2y^3}}\exp\left(-\frac{c^2y}{2\sigma^2}\right)dy, \quad y>0, 
\end{equation}
then the immigration measure is $z\nu(dz)$ and $\beta=c$; moreover $b_1(n)= n^{-2}$ and $b_2(n)=n^{-1}$;
\item [b)] when the assumptions (\ref{hipcolapit}) and (\ref{hipnew}) hold, the reproduction measure is $\nu^\alpha(dz)$ as defined in (\ref{nuest}), the immigration measure is $z\nu^\alpha(dz)$ and $\beta=1$; the normalizing constants are given by $b_1(n)= (r(n)p(n))^{-1}$ and $b_2(n)=(r(n))^{-1}$.
\end{enumerate}
\end{theorem}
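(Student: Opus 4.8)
My plan is to prove Theorem \ref{T1} by reducing the conditioned chain to a size-biasing of the unconditioned tree of alleles and then transporting the scaling limit of Theorem \ref{Tmainest} (and, in case a), of Bertoin's theorem) through this size-biasing. The first observation is that in the critical regime the conditioning martingale of Theorem \ref{Tpcondextdistfut} degenerates: criticality $\mathbb{E}(\xi^{(+)})=1$ forces $\mathbb{E}_1(M_1)=\mathbb{E}(\xi^{(m)})/(1-\mathbb{E}(\xi^{(c)}))=1$ and hence certain and finite-time extinction of the mutant chain, so that $q=1$ and $f'(q)=1$. Therefore $Y_n=M_n\mathbf{1}_{\{n<T\}}=M_n$, and for every bounded $\mathcal{F}_N$-measurable functional $F$,
$$\mathbb{E}^{\uparrow}_{a}[F]=\frac{1}{a}\,\mathbb{E}_{a}\!\left[M_N\,F\right].$$
In other words, $\mathbb{P}^{\uparrow}_{a}$ is the unconditioned law size-biased by the number of mutants at the last level involved; this is the discrete-generation counterpart of the spinal/immigration description announced in Section \ref{secinter}, and it is precisely the device that will generate the immigration term in the limit.

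Next I would compute the joint Laplace transform
$$\Phi_n:=\mathbb{E}^{\uparrow}_{a(n)}\!\left[\exp\Big(-\sum_{k}\big(\lambda_k b_1(n)T_{k-1}+\mu_k b_2(n)M_k\big)\Big)\right]=\frac{1}{a(n)}\,\mathbb{E}_{a(n)}\!\left[M_N\exp\Big(-\sum_{k}\big(\lambda_k b_1(n)T_{k-1}+\mu_k b_2(n)M_k\big)\Big)\right].$$
The spinal factor $M_N$ is produced by a $y$-derivative of the joint generating function $\varphi_n$ of $(T_n,M_{n+1})$, so the relation between the generating functions of $(T_n,M_{n+1})$ and of $M_n$ established in Section \ref{pregf}, together with the Markov property and the branching property of Lemma \ref{L2B2}, yields a closed level-by-level recursion for $\Phi_n$. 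Into this recursion I would feed the unconditioned scaling limit: by Theorem \ref{Tmainest} in case b) (and by Bertoin's theorem in case a)) the rescaled pairs $((r(n))^{-1}\mathcal{A}^{(n)}_u,(r(n)p(n))^{-1}d^{(n)}_u)$ converge to $(\mathcal{Z}^{1/\alpha}_u,\mathcal{Z}^{1/\alpha}_u)$, whence summing over a level through (\ref{obsnivcsbptree}) makes $b_1(n)T_{k-1}$ and $b_2(n)M_k$ asymptotically proportional. The ratio of normalizations fixed by (\ref{hipnew}) (resp. (\ref{hip})) then yields the proportionality constant $\beta=1$ (resp. $\beta=c$) and identifies the limiting branching Laplace exponent with the one attached to $\nu^{\alpha}$ of (\ref{nuest}) (resp. $\nu$ of (\ref{fdpnu})).

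It then remains to read off the effect of the size-biasing in the limit. For the tree-indexed CSBP the one-level offspring transform is $\exp\!\big(-y\int(1-e^{-\lambda z})\,\nu(dz)\big)$; size-biasing the total mass of a level differentiates this transform and, by the elementary Poisson-random-measure identity, inserts exactly the extra factor $\exp\!\big(-\int(1-e^{-\lambda z})\,z\,\nu(dz)\big)$. This is the transition Laplace transform of a continuous state branching process with immigration whose immigration measure is the size-biased reproduction measure $z\,\nu(dz)$ (resp. $z\,\nu^{\alpha}(dz)$), which is the asserted limit $\{(Y_k,\beta Y_k):k\in\mathbb{Z}_+\}$.

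I expect the main obstacle to be the interchange of the scaling limit with the size-biasing. Size-biasing by $M_N$ probes the population on a scale a factor $\sim p(n)^{-1}$ larger than the typical unconditioned scale, which is exactly why the conditioned normalizations $b_1(n)=(r(n)p(n))^{-1}$, $b_2(n)=(r(n))^{-1}$ in case b) differ from the unconditioned ones; controlling this requires showing that the normalized spinal mass $b_2(n)M_N/\mathbb{E}_{a(n)}(M_N)$ is uniformly integrable along the sequence, so that the $M_N$-size-biased laws converge to the size-biasing of the limit rather than losing mass to the spine at infinity. In case b) this is the most delicate point, since (\ref{rpro}) gives only tail control; I would obtain the required uniform integrability by controlling truncated Laplace transforms directly and exploiting the explicit stable structure of $\nu^{\alpha}$, and analogously, in case a), the explicit density (\ref{fdpnu}) together with the finite variance $\sigma^2$.
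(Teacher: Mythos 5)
Your overall architecture coincides with the paper's: in the critical case $q=1$ and $f'(q)=m=1$, so the $h$-transform reduces to size-biasing by $M_N$, and the paper likewise writes $\mathbb{E}_a^{p\uparrow}(e^{-sT_0-tM_1})=\mathbb{E}_{a-1}^{p}(e^{-sT_0-tM_1})\,\mathbb{E}_1^{p}(e^{-sT_0-tM_1}M_1)$, peels off one level at a time by the Markov property, feeds in the unconditioned limit (Lemma \ref{L4estable} in case b), Bertoin's Lemma 4 in case a)), and identifies the second, size-biased factor as the immigration term. Your concern about interchanging the scaling limit with the size-biasing is legitimate and is handled in the paper not by a uniform-integrability argument but by the explicit integral representation $1-\mathbb{E}_1^{p(n)}(e^{-sb_1T_0-tb_2M_1})=st\int\!\!\int e^{-su-tv}\bar\mu_n(b_1^{-1}u,b_2^{-1}v)\,du\,dv$, whose $t$-derivative converges by the tail asymptotics of Lemma \ref{L4estable}(ii); this is in the same spirit as your proposed control of truncated Laplace transforms.

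There is, however, one genuine error in your identification of the limit. Size-biasing the total mass of a level with Laplace transform $\exp\bigl(-y\int(1-e^{-\lambda z})\nu(dz)\bigr)$ multiplies it by the normalized derivative, i.e.\ by the factor $\int e^{-\lambda z}\,z\,\nu(dz)$ (the Laplace transform of the \emph{probability} measure $z\nu(dz)$, which indeed has total mass one for \eqref{fdpnu}), \emph{not} by $\exp\bigl(-\int(1-e^{-\lambda z})\,z\,\nu(dz)\bigr)$. The latter would describe Poissonian immigration with intensity $z\nu(dz)$, a different process; the two expressions do not agree for either \eqref{fdpnu} or \eqref{nuest}. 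The paper's limit \eqref{inmigracion} is $\int_0^\infty e^{-(s+c_\vartheta t)z}z\,\vartheta(dz)$, consistent with its definition $Z^I_{n+1}=\tau_n(Z^I_n)+I_n$ where $I_n$ is a single immigrant packet with law $\omega(dz)=z\vartheta(dz)$ and $\iota(\lambda)=\int e^{-\lambda z}\omega(dz)$. Since the first sentence of that paragraph of yours (``size-biasing \dots differentiates this transform'') already produces the correct factor, the fix is local, but as written your limiting process is not the one in the statement. A smaller point: the normalizations in the conditioned case are not probing a scale $p(n)^{-1}$ larger than the unconditioned one; they are the same normalizations as in Proposition \ref{Pro2estable} (compare Lemma \ref{LconvT0M1nextsub}(b)), the apparent discrepancy in the theorem's statement of $b_1,b_2$ being a labelling issue rather than a genuine change of scale.
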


The remainder of the paper is organized as follows. In Section \ref{pregf} we provide some preliminary facts. Section \ref{seccond}, is devoted to construct and to interpret the process $\{ (T_n,M_{n+1}): n\in\mathbb{Z} \}$ conditioned to non extinction. The last section is divided in three parts; the first one correspond to the framework where the reproduction law is in the domain of attraction of an $\alpha$-stable distribution. Section \ref{sectpruebas} includes the proof of our main result in this setting. Finally we prove Theorem \ref{T1}, which is the result that explains the asymptotic behaviour of the process conditioned to non-extinction. 

\section{Preliminaries}\label{pregf}
In this section we obtain some useful formulas for the generating function of $(T_n,M_{n+1})$, denoted for $n\in\mathbb{Z}$ by 
\begin{equation*}
\varphi_n(x,y):=\mathbb{E}_1( x^{T_{n-1}} y^{M_n} ), \quad x,y\in [0,1],
\end{equation*}
where for notational convenience $\varphi_1(x,y):=\varphi(x,y)$. Observe that the generating function of $M_n$ is
\begin{equation} \label{relgenMgenMT}
f_n(y):=\varphi_n(1,y), \quad y\in [0,1],
\end{equation}
and as before we denote $f_1(y)=:f(y)$. 

According to classical theory of branching processes, the extinction probability of the Galton-Watson process $\{ M_{n},n\in\mathbb{Z}_+ \}$, that we denote by $q$, is the smallest root of $f(y)=y$, which is less or equal than one depending on whether, the mean of the reproduction law, $m:=\mathbb{E}(M_1)$ is $>1$, or $\leq1$, respectively. 
In order to avoid trivial cases, we assume throughout that 
\begin{enumerate}
\item [H1)]$\mathbb{P}(M_1=1)>0$,
\item [H2)]$\mathbb{P}(M_1=0)+\mathbb{P}(M_1=1)<1$, and
           $\mathbb{P}(M_1=j)\neq1$, for any $j$.
\end{enumerate}
We also know that the $n$-step transition probabilities $\{ \mathbf{P}^{n} _ {(i,j)} : i,j \in\mathbb{Z}_+ \}$ of $\{ M_{n},n\in\mathbb{Z}_+ \}$ satisfy
\begin{equation} \label{ptgenM} 
\sum_{j=0}^\infty \mathbf{P}^n_{(i,j)} y^j = \left( f_n(y) \right)^i, \quad i\geq1.
\end{equation}
Now for a Galton-Watson process with neutral mutations, let $g$ be the generating function of the reproduction law of a typical individual, that is
$$ g(x,y):= \mathbb{E}(x^{\xi^{(c)}} y^{ \xi^{(m)} } ), \quad x,y\in[0,1]. $$
Proposition 1 of \cite{B2} ensure that the law of $(T_0,M_1)$ can be obtained applying the Lagrange inversion formula to the equation
\begin{equation} \label{P1iB2}
\varphi(x,y)= xg(\varphi(x,y),y), \quad x,y\in [0,1],
\end{equation}
which gives the identity
\begin{equation} \label{P1iiB2}
\mathbb{P}_a( T_0=k, M_1=l ) = \frac{a}{k} \pi^{*k}_{k-a,l}, \quad k\geq a\geq 1 \text{ and } l\geq 0,
\end{equation}
where $\pi^{*k}$ denotes the $k$-th convolution of $\pi$, as defined in (\ref{lawreptypnot}).

The statement in the remark \ref{eqextimmean} is a consequence of (\ref{P1iB2}). We also have, $\mathbb{E}(\xi^{(+)2})<\infty$ if and only if $\mathbb{E}(M_1^2)<\infty$. From (\ref{P1iiB2}) we can write the hypothesis (H1) and (H2) in terms of the reproduction distribution of a typical individual. Besides, we have a similar identity to (\ref{ptgenM}) for the process $\{ (T_n,M_{n+1}),n\in\mathbb{Z}_+ \}$ 
\begin{equation} \label{ptgen}
\sum_{k,l=0}^\infty P^n_{(i,j),(k,l)} x^k y^l = \left( \varphi_n(x,y) \right)^j, \quad i,j\geq 1,
\end{equation}
where $P^n$ denotes the $n$-step transition probabilities.
We get this equality by induction, for we apply the Chapman-Kolmogorov equation to express the $(n+1)$-step transition probabilities in terms of the transitions in one step and use $(\ref{probatranTM})$.
A simple but key relation for our analysis is
\begin{equation} \label{ecgh}
\varphi_n(x,y) = f_{n-1}( \varphi(x,y) ), \quad x,y\in [0,1].
\end{equation}
Due to (\ref{relgenMgenMT}), the proof of the identity is equivalent to establish the following equality
\begin{equation}
\varphi_n(x,y)= \varphi_{n-1} (1,\varphi(x,y)), \quad x,y\in [0,1] , \label{genrec}
\end{equation}
which holds from the following standard calculations
\begin{align*} 
\varphi_n(x,y) 
& = \mathbb{E}_1( \mathbb{E}_1 ( x^{T_{n-1}} y^{M_n} \left |  \right. T_{n-2} , M_{n-1}  ) ) \\ 
& = \sum_{i,j=0}^\infty \mathbb{P}_1 ( T_{n-2}=i, M_{n-1}=j ) 
                                 \sum_{k=j}^\infty \sum_{l=0}^\infty x^k y^l \mathbb{P}_1(T_{n-1}=k,M_n=l \left| \right. T_{n-2}=i, M_{n-1}=j ) \\ 
& = \sum_{i,j=0}^\infty \mathbb{P}_1 ( T_{n-2}=i, M_{n-1}=j )  \sum_{k=j}^\infty \sum_{l=0}^\infty x^k y^l \mathbb{P}_j(T_0=k,M_1=l) \\ 
         & = \sum_{i,j=0}^\infty \mathbb{P}_1 ( T_{n-2}=i, M_{n-1}=j ) (\varphi(x,y))^j \label{genrec2}  \\  
         & = \varphi_{n-1} (1,\varphi(x,y)),
\end{align*}
where we use that $\{ (T_{n-1}, M_n) : n\in\mathbb{N} \}$ is Markovian, Lemma \ref{L1B2} and the branching property. 

\section{The process conditioned to non-extinction} \label{seccond}
This section is devoted to study the process $\{ (T_n,M_{n+1}),n\in\mathbb{Z}_+ \}$ conditioned to non-extinction. 

\subsection{Construction}
Here our aim is to prove Theorem \ref{Tpcondextdistfut}, that ensures the existence of a Markovian process that we shall understand as the chain $\{ (T_n,M_{n+1}),n\in\mathbb{Z}_+ \}$, conditioned to the non-extinction of mutants in the population.
\begin{proof}[Proof of Theorem \ref{Tpcondextdistfut}]
An application of the monotone convergence Theorem along with an elementary computation shows that
$$
\dfrac{d}{d s}\mathbb{E}_a ( s^{M_n} ) \left | \right._{s=q} =\mathbb{E}_a \left(  M_n q^{ M_n-1} \right).
$$ 
Moreover, the following identity is deduced form the branching property of the Galton-Watson process $\{ M_n: n\in\mathbb{Z}_+\}$ and the properties of its generating function 
$$\dfrac{d}{d s}\mathbb{E}_a ( s^{M_n} ) \left | \right._{s=q} =aq^{a-1} f_n'(q).$$
The latter and former identities imply in turn that
$$\mathbb{E}_a \left(  M_n q^{ M_n-1} \right) =a q^{a-1} f'_n(q).$$
Then by the Markov property, 
$$\mathbb{E}_a \left(  M_{n+k} q^{ M_{n+k}-1} | \mathcal{F}_n \right)
=M_n q^{M_n-1} f'_k(q).$$
Combining the latter with the fact that $f'_k(q)=\left[ f'(q) \right]^k$ (see Athreya and Ney \cite{Athreya}, Lemma 3.3), we have that $$Y_n=\frac{ M_n q^{M_n-a} }{ ( f'(q) )^n },$$ is martingale. Now from the theory of $h$-transform there exists a Markovian process (see \cite{chungwalsh}), that we denote by $\{ (T^\uparrow_n,M^\uparrow_{n+1}),n\in\mathbb{Z}_+ \}$ whose law satisfies
\begin{equation}  \label{pflecha}
\mathbb{P}_a^\uparrow(T^\uparrow_0=i_0,M^\uparrow_1=j_1, \cdots,T^\uparrow_{n-1}=i_{n-1},M^\uparrow_n=j_n): = \mathbb{P}_a(A_n) \frac{ j_n q^{j_n-a} }{ a( f'(q) )^n },
\end{equation}
where for every $n\in\mathbb{N}$, and
\begin{equation} \label{eventoAn}
A_n=\{ T_0=i_0,M_1=j_1, \cdots,T_{n-1}=i_{n-1},M_n=j_n \}, \quad i_0,j_1,...,i_{n-1},j_n\in\mathbb{N}.
\end{equation}
\end{proof}

\subsection{Conditional laws}\label{sec2}
This subsection is devoted to prove Theorem \ref{Yaglomlimt}, in this aim the keystone will be the generating function of $\{ (T_n,M_{n+1}): n\in\mathbb{Z} \}$, hence some of the results given in section \ref{pregf} will be neccesary. 

\begin{proof}[Proof of Theorem \ref{Yaglomlimt}]\label{pTlimtcond}
\begin{enumerate}
\item[i)]
We will first ensure that the generating function converges. For this end observe that $\{M_n>0\}$ under the event $\{ n<T<\infty \}$. Then for all $x,y\in[0,1]$, we have the identity
\begin{align*}
\widehat{\varphi}_n(x,y) 
& := \mathbb{E}_1( x^{T_{n-1}} y^{M_n} \left| \right. n<T<\infty ) \nonumber \\
&  = \frac{ \varphi_n(x,y) - \varphi_n(x,0) }{ 1-\mathbb{P}(M_n=0) }. \nonumber 
\end{align*}
Using the identity (\ref{ecgh}) and the fact $f_n(0)=\mathbb{P}(M_n=0)$, we get the expression.
$$ 
\widehat{\varphi}_n(x,y) 
=    \frac{ 1-f_{n-1}(0) }{ 1-f_n(0) } \left( \frac{ f_{n-1}(\varphi(x,y))- f_{n-1}(0) }{ 1-f_{n-1}(0) }
                       - \frac{ f_{n-1}(\varphi(x,0))- f_{n-1}(0) }{ 1-f_{n-1}(0) } \right). \nonumber 
$$
Now we take $u=f_{n-1}(0)$ and use that $m=f'(1)$ to obtain the following limit.
\begin{equation*}
\lim_{n\to\infty} \frac{ 1-f_{n-1}(0) }{ 1-f_n(0) } =  \lim_{u\to1} \frac{ 1-u }{ 1-f(u) } =  \frac{1}{m}.
\end{equation*}
Besides, note that for each $s$ the function 
$$n \longmapsto \frac{ 1-f_{n-1}(s) }{ 1-f_{n-1}(0) },$$
is a decreasing function. This implies that as $n$ tends to infinity the expression
\begin{equation*}
 \frac{ f_{n-1}(s)- f_{n-1}(0) }{ 1-f_{n-1}(0) } = 1 - \frac{ 1-f_{n-1}(s) }{ 1-f_{n-1}(0) },
\end{equation*}
has a limit, say $1 - \widehat{f}(s)$. We stress (see for instance Athreya and Ney \cite{Athreya}, Theorem 8.1) that $ \widehat{f}(s)$ is the generating function of  $\widehat{m}=\{ \widehat{m}_k : k\in\mathbb{N} \}$, the Yaglom distribution of $\{M_n: n\in\mathbb{Z}_+\}$, which is obtained for all $k\in\mathbb{N}$, as follows
$$
 \widehat{m}_k = \lim_{n\rightarrow\infty} \mathbb{P}(M_n=k|\,n<T<\infty).
$$
This fact implies that, the generating function $\widehat{f}$ satisfies
\begin{equation} \label{fgqsdmut}
1-\widehat{f}(f(s)) = m(1-\widehat{f}(s)), \quad s\in[0,1].
\end{equation}
The above calculations imply the following 
\begin{equation*} 
\widehat{\varphi}(x,y)  := \lim_{n\rightarrow\infty} \widehat{\varphi}_n(x,y) =\frac{\widehat{f}(\varphi(x,y)) - \widehat{f}(\varphi(x,0))}{m} .
\end{equation*}
Now we prove by induction (\ref{fglimyaglom}). If $n=1$, it is the just proved equality. Then suppose  (\ref{fglimyaglom}) holds for $n=k$. In order to get the identity for $n=k+1$ note that by the induction hypothesis$$ 
m^{k+1}\widehat{\varphi}(x,y) = m \left[1-\widehat{f}(\varphi_k(x,0)) \right] - m \left[1-\widehat{f}(\varphi_k(x,y))\right], \quad x,y\in[0,1]. 
$$
From where we deduce the claim by using first (\ref{fgqsdmut}) and then (\ref{ecgh}).
\item[ii)]
Let us consider the event of the form in (\ref{eventoAn}). Since $\{ (T_n,M_{n+1}),n\in\mathbb{Z}_+ \}$ is Markovian we have for $A_n$ as in (\ref{eventoAn}), that
$$
\mathbb{P}_a(A_n,n+k<T<\infty) =  \mathbb{E}_a( \mathbf{1}_{A_n} \mathbb{E}_{j_n}( \mathbf{1}_{\{ M_k>0 \}} q^{M_k} ) ),
$$
and similarly
$$ 
\mathbb{P}_a(n+k<T<\infty)  = \mathbb{E}_a( \mathbf{1}_{\{ M_{n+k}>0 \}} q^{M_{n+k}}  ).
$$
Then using (\ref{ptgenM}), we get 
$$\mathbb{P}_a(A_n\,|n+k<T<\infty) 
= \mathbb{P}_a(A_n) \frac{\sum\limits_{j=1}^\infty \mathbf{P}^k_{(j_n,j)}q^j}{\sum\limits_{j=1}^\infty\mathbf{P}^{n+k}_{(1,j)}q^j}. 
$$
Theorem 7.4 in \cite{Athreya} establishes that the following limit holds  
$$
\lim_{k\rightarrow \infty }\frac{ \mathbf{P}^{n+k}_{(i_1,j)} }{ \mathbf{P}^{k}_{(i_2,j)} } = i_1 i_2 ^{-1} ( f'(q) )^k   q^{i_1-i_2}.
$$
Finally, thanks to hypothesis (H2) we can apply the previous result to obtain 
$$
\lim_{k\rightarrow \infty} \mathbb{P}_a(A_n|n+k<T<\infty) = \mathbb{P}_a(A_n) \frac{ j_n q^{j_n-a} }{ a( f'(q) )^n },   \quad a\in\mathbb{N}, 
$$
which finishes the proof.
\end{enumerate}
\end{proof}
\begin{remark}
In the previous proof we established the existence of a Yaglom limit when $m\leq 1$, however similar arguments can be used to show that existences of a Yaglom limit in the supercritical case.
\end{remark}

\subsection{Interpretation}\label{secinter}
Motivated by the interpretation of a Galton-Watson process conditioned to non-extinction given in \cite{Amaury}, in the present subsection we provide a description of $\{ (T_n^\uparrow,M_{n+1}^\uparrow): n\in\mathbb{Z} \}$ as immigration of mutants of the $n$-th type into the population. We start calculating the generating function of the $n$-step transition probabilities of the process. 
\begin{proposition} 
The generating function of the $n$-step transition probabilities for the process $$\{ (T^\uparrow_n,M^\uparrow_{n+1}),n\in\mathbb{Z}_+ \}$$ is given by
\begin{equation} \label{qinmTM}
\sum_{k,l=1}^\infty Q^n_{(i,j),(k,l)} x^k y^l 
= \frac{yq^{1-j}}{ [ f'(q) ]^n}  [\varphi_n(x,qy)]^{j-1} \dfrac{\partial}{\partial y} \varphi(x,qy) \prod_{i=1}^{n-1}  f'( \varphi_i(x,qy)), \quad x,y\in[0,1].
\end{equation}
\end{proposition}

\begin{proof}
Since $x,y\leq 1$ the generating function of (\ref{qtrans}) is infinitely differentiable, then an elementary calculation and the use of (\ref{ptgen}) imply the following identity.
\begin{equation*}
\sum_{k,l=1}^\infty Q^n_{(i,j),(k,l)} x^k y^l 
= \frac{yq^{1-j}}{[f'(q)]^n} \left[  \varphi^{j-1}_n(x,u) \dfrac{\partial}{\partial u} \varphi_n(x,u) \right] _{u=qy}.
\end{equation*}
To finish the proof we apply repeatedly (\ref{ecgh}) and the recursion $f_n(y)=f(f_{n-1}(y))$.
\end{proof}
Taking $x=1$ in (\ref{qinmTM}) and recalling the fact that the transition probabilities of $\{ (T_n,M_{n+1}),n\in\mathbb{Z}_+ \}$ depend only on the second coordinate, we can identify a Galton-Watson process with immigration \cite{Watanabe}. 
 
 \begin{corollary} \label{corodbi}
If $\{ M_n,n\in\mathbb{Z}_+ \}$ is critical or subcritical, then $\{ M_n^\uparrow-1 ,n\in\mathbb{Z}_+ \}$ is a Galton-Watson process with immigration $[f,\frac{f'}{m}]$.
\end{corollary}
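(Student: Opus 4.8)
The plan is to identify the one-dimensional marginal $\{M^\uparrow_{n+1}:n\ge0\}$ of the conditioned chain and to match its transition generating function with that of a Galton-Watson process with immigration, shifted by one. First I would record two structural facts. On the one hand, exactly as for the unconditioned chain, the $n$-step transition probabilities $Q^n_{(i,j),(k,l)}$ in (\ref{qtrans}) depend only on the second coordinate $j$, which is immediate from (\ref{qtrans}) together with the corresponding property (\ref{probatranTMexp}) of $P^n_{(i,j),(k,l)}$; summing over $k$ shows that the law of $M^\uparrow_{r+n+1}$ given $(T^\uparrow_r,M^\uparrow_{r+1})=(i,j)$ depends on $j$ alone, so that $\{M^\uparrow_{n+1}\}$ is itself a time-homogeneous Markov chain. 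On the other hand, since $Q^n$ is supported on $j,l\ge 1$, one has $M^\uparrow_{n}\ge 1$, whence $M^\uparrow_{n}-1\ge 0$ is an admissible state for a process on $\mathbb{Z}_+$.

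Next I would set $x=1$ in the generating-function formula (\ref{qinmTM}). Using $\varphi_n(1,\cdot)=f_n$ and $\tfrac{\partial}{\partial y}\varphi(1,qy)=qf'(qy)$, the right-hand side becomes
$$\sum_{l\ge1}\Big(\sum_{k\ge1}Q^n_{(i,j),(k,l)}\Big)y^l=\frac{y\,q^{1-j}}{[f'(q)]^n}\,[f_n(qy)]^{j-1}\,q f'(qy)\prod_{i=1}^{n-1}f'(f_i(qy)).$$
The key simplification is the chain rule for the iterates $f_n=f^{\circ n}$, namely $f_n'(y)=\prod_{i=0}^{n-1}f'(f_i(y))$ with $f_0=\mathrm{id}$, so that $\prod_{i=1}^{n-1}f'(f_i(qy))=f_n'(qy)/f'(qy)$. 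Substituting this collapses the expression to $y\,q^{2-j}[f'(q)]^{-n}[f_n(qy)]^{j-1}f_n'(qy)$.

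The decisive point is the hypothesis that $\{M_n\}$ is critical or subcritical: then the extinction probability is $q=1$ and $f'(q)=f'(1)=m$, so the transition generating function of the marginal becomes
$$\mathbb{E}\big(y^{M^\uparrow_{n+1}}\mid M^\uparrow_1=j\big)=\frac{y}{m^n}\,[f_n(y)]^{j-1}\,f_n'(y).$$
Finally I would compare this with the $n$-step transition generating function of a Galton-Watson process $\{X_n\}$ with offspring generating function $f$ and immigration generating function $h:=f'/m$, which for $X_0=s$ equals $[f_n(y)]^{s}\prod_{i=0}^{n-1}h(f_i(y))=[f_n(y)]^{s}f_n'(y)/m^n$; here $h$ is a genuine probability generating function, since $h(1)=f'(1)/m=1$ and $f'$ inherits nonnegative coefficients from $f$. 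Taking $s=j-1$ gives $\mathbb{E}(y^{M^\uparrow_{n+1}-1}\mid M^\uparrow_1=j)=\mathbb{E}(y^{X_n}\mid X_0=j-1)$, which together with the Markov property established above identifies $\{M^\uparrow_{n+1}-1\}$ with the chain $[f,f'/m]$. The only real obstacle is bookkeeping: carrying the powers of $q$ correctly through the simplification and recognizing that the critical/subcritical assumption forces $q=1$, which is precisely what cancels the extraneous factors and exposes the clean immigration structure.
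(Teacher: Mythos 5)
Your argument is correct and is precisely the route the paper takes: the paper's entire proof of this corollary is the one-line remark that one sets $x=1$ in (\ref{qinmTM}) and uses the fact that the transition probabilities depend only on the second coordinate, and you have simply carried out that computation in full, including the telescoping identity $\prod_{i=1}^{n-1}f'(f_i(y))=f_n'(y)/f'(y)$ and the observation that (sub)criticality forces $q=1$ and $f'(q)=m$. The only cosmetic quibble is your reading of $\frac{\partial}{\partial y}\varphi(1,qy)$ as $q f'(qy)$, whereas the paper's displayed formula means the partial derivative in the second argument evaluated at $qy$, i.e.\ $f'(qy)$ (as its own intermediate step $\left[\varphi_n^{j-1}(x,u)\,\partial_u\varphi_n(x,u)\right]_{u=qy}$ shows) — but since $q=1$ here this discrepancy is immaterial.
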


\noindent Note that $\{ M_n^\uparrow,n\in\mathbb{Z}_+ \}$ is the $Q$-process associated to the Galton-Watson process $\{ M_n,n\in\mathbb{Z}_+ \}$ (see for instance \cite{Amaury} or \cite{Athreya}). The following Corollary is analogous to Proposition 1 in \cite{B2}.

\begin{corollary} \label{coroextBP1}
If $\{ M_n,n\in\mathbb{Z}_+ \}$ is critical or subcritical, then the generating function of $(T_0^\uparrow,M_1^\uparrow)$ is given by the equation
$$ \mathbb{E}_1(x^{T^\uparrow_0}y^{M^\uparrow_1}) = \frac{xy}{m} \dfrac{\partial}{\partial y} g( \varphi(x,y),y ), \quad x,y\in[0,1]. $$
Moreover, the distribution of $(T_0^\uparrow,M_1^\uparrow)$ is given by
$$ \mathbb{P}_a^\uparrow( T_0^\uparrow=k, M_1^\uparrow=l ) = \frac{l}{mk} \pi^{*k}_{k-a,l}, \quad k\geq a\geq 1 \text{ y } l\geq 0. $$
\end{corollary}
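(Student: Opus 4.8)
The plan is to read off both assertions from results already in the excerpt, exploiting that in the critical or subcritical case the extinction probability of $\{M_n\}$ equals $q=1$, so that $f'(q)=f'(1)=m$. I would start with the generating function. Setting $n=1$ and $q=1$ in the Proposition \eqref{qinmTM}, the empty product equals $1$ and $\varphi_1=\varphi$, so the one-step transition generating function becomes
$$\sum_{k,l\geq1}Q^1_{(i,j),(k,l)}x^ky^l=\frac{y}{m}\,[\varphi(x,y)]^{j-1}\,\frac{\partial}{\partial y}\varphi(x,y),\qquad x,y\in[0,1].$$
Since $(T_0^\uparrow,M_1^\uparrow)$ under $\mathbb{P}_1^\uparrow$ is produced from a single ancestor, the relevant second coordinate is $j=1$, the factor $[\varphi(x,y)]^{j-1}$ disappears, and one obtains $\mathbb{E}_1(x^{T_0^\uparrow}y^{M_1^\uparrow})=\frac{y}{m}\,\partial_y\varphi(x,y)$.

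Next I would convert this into the stated form involving $g$. Differentiating the implicit relation \eqref{P1iB2}, namely $\varphi(x,y)=x\,g(\varphi(x,y),y)$, with respect to $y$ gives $\partial_y\varphi(x,y)=x\,\partial_y g(\varphi(x,y),y)$, where the right-hand side is the total $y$-derivative (acting through both the first argument and the explicit second argument). Substituting this into the previous expression yields exactly $\mathbb{E}_1(x^{T_0^\uparrow}y^{M_1^\uparrow})=\frac{xy}{m}\,\partial_y g(\varphi(x,y),y)$, which is the first claim. As a consistency check one may instead derive the same generating function straight from the $h$-transform definition \eqref{pflecha} at $n=1$ together with \eqref{ptgen}; this produces $\frac{y}{m}[\varphi(x,y)]^{a-1}\partial_y\varphi(x,y)$ under $\mathbb{P}_a^\uparrow$, reducing to the above at $a=1$.

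For the distribution I would argue directly from the $h$-transform. Taking $n=1$ and $q=1$ in \eqref{pflecha} gives $\mathbb{P}_a^\uparrow(T_0^\uparrow=k,M_1^\uparrow=l)=\frac{l}{am}\,\mathbb{P}_a(T_0=k,M_1=l)$, and inserting Bertoin's formula \eqref{P1iiB2}, $\mathbb{P}_a(T_0=k,M_1=l)=\frac{a}{k}\pi^{*k}_{k-a,l}$, immediately yields $\frac{l}{mk}\pi^{*k}_{k-a,l}$, the second claim. I do not expect a genuine obstacle here, as the argument is essentially bookkeeping; the two points needing care are the convention that fixes the relevant second coordinate to the number of ancestors (so one evaluates at $j=a$, and at $j=1$ for the stated generating function), and the differentiation of the implicit equation \eqref{P1iB2}, where $\varphi$ depends on $y$ both explicitly and through its first slot. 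One could alternatively recover the distribution from the generating function by Lagrange inversion, mirroring Proposition~1 of \cite{B2}, but the direct route above is shorter.
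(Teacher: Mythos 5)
Your proposal is correct and follows essentially the same route as the paper: take $n=1$ in the transition generating function (\ref{qinmTM}) (with $q=1$, $f'(q)=m$ in the critical/subcritical case) to get $\frac{y}{m}\partial_y\varphi(x,y)$, substitute the implicit equation (\ref{P1iB2}) to pass to $g$, and obtain the explicit distribution from the $h$-transform definition (\ref{pflecha}) combined with (\ref{P1iiB2}). Your extra remarks on the total-derivative interpretation of $\partial_y g(\varphi(x,y),y)$ and on the role of $j=1$ versus $j=a$ are sound and only make explicit what the paper leaves implicit.
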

\begin{proof}
Taking $n=1$ in the equality (\ref{qinmTM}),
$$\mathbb{E}_1(x^{T^\uparrow_0}y^{M^\uparrow_1})= \frac{y}{m}  \dfrac{\partial}{\partial y} \varphi(x,y). $$
Then the first identity is obtained with the substitution of (\ref{P1iB2}). To get the second one, recall the definition of $\mathbb{P}^\uparrow$ given in (\ref{pflecha}), then use (\ref{P1iiB2}). 
\end{proof}

We can now give an interpretation to the process $\{ (T^\uparrow_n,M^\uparrow_{n+1}),n\in\mathbb{Z}_+ \}$, in terms of a tree of alleles with immigration $\mathcal{A}^\uparrow=(\mathcal{A}^\uparrow_u:u\in\mathbb{U})$, this will provide a description of the genealogical structure in a population conditioned to non extinction. The key elements are the tree of alleles $\mathcal{A}=(\mathcal{A}_u:u\in\mathbb{U})$ and the above corollaries. 

We start defining $\mathcal{A}^\uparrow_\varnothing=T_0^\uparrow$ that is, the total number of individuals without mutations into the population, then according to a distribution with generating function $f'/m$, a random number of individuals of the same genetic type arrive. We enumerate the $M^\uparrow_1$ allelic sub-populations of the first type beget by $T_0^\uparrow$ in decreasing order, with the convention that in the case of ties, sub-populations of the same size are ranked uniformly at random.
Then, using Corollary \ref{corodbi} we choose uniformly at random one of the first type sub-families in the tree of alleles, removing it and replace it by a population of size $T_0^\uparrow$ which begets allelic subpopulation according to $M_1^\uparrow$, where $(T_0^\uparrow,M_1^\uparrow)$ is given by Corollary \ref{coroextBP1}. We continue with the construction by iteration, $\mathcal{A}^\uparrow_{uj}$ is the size of the $j$-th sub-population allelic of type $|u|+1$ which descend from the allelic sub-family indexed by the vertex $u$, then we choose one of the sub-families of the type $|u|+1$ to replace it for one of size $T_0^\uparrow$ which begets allelic subpopulation according to $M_1^\uparrow$. 

\section{Asymptotic behavior}\label{secab}
 
\subsection{The $\alpha$-stable case}
Consider a Galton-Watson process $\{Z^{+}_n,n\in\mathbb{Z}_+\}$ with reproduction law $\xi^+$ denoted by $\pi^{+}$, that is 
$$
\pi^+_k=\mathbb{P}(\xi^{(c)}+\xi^{(m)}=k), \quad k\in\mathbb{Z}_+.
$$
Suppose that $\pi^+$ is critical and (\ref{hipcolapit}) holds. The mutations appear in the population according to (\ref{mut}) and we focus on the situation (\ref{hipnew}) described in the Introduction. Our goal in this section is to prove Theorem \ref{Tmainest}, for that end we start by describing the normalizing constant $r(n)$ appearing therein.
\begin{lemma} \label{convmedidas}
If condition (\ref{hipcolapit}) holds, then there exists $r(n)\in RV_\infty^{1/ \alpha}$ such that
\begin{equation*}
r(n)\pi^{+}(ndy) \xrightarrow[n\rightarrow \infty] {} 
c_\alpha \frac{dy }{y^{1+\alpha}}, 
\end{equation*}
in the sense of vague convergence on $(0,\infty)$, where $c_\alpha= 1/\Gamma( 3 - \alpha)$. In particular
\begin{equation*}
\exp \left\{ -t \int_{[0,\infty)} (1-e^{-\lambda y}- \lambda y ) r(n)\pi^{+}(ndy) \right\}\xrightarrow[n\rightarrow \infty] {} 
e^{-t\lambda^{\alpha}}. 
\end{equation*}
\end{lemma}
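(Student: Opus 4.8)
The plan is to split the statement into the vague convergence of the rescaled reproduction measures $r(n)\pi^{+}(n\,dy)$ and the convergence of the Laplace-type exponent, deducing the second from the first by a truncation argument. First I would fix the normalising function by choosing $r(n)$ so that $r(n)\bar{\pi}^{+}(n)$ tends to $c_\alpha/\alpha$, e.g. $r(n)=c_\alpha/\bigl(\alpha\,\bar{\pi}^{+}(n)\bigr)$; since $\bar{\pi}^{+}\in RV_\infty^{-\alpha}$ the quotient $\bar{\pi}^{+}(ny)/\bar{\pi}^{+}(n)$ converges to $y^{-\alpha}$ locally uniformly in $y>0$ by the uniform convergence theorem for regularly varying functions, so that
\begin{equation*}
r(n)\bar{\pi}^{+}(ny)\xrightarrow[n\to\infty]{}\frac{c_\alpha}{\alpha}\,y^{-\alpha},\qquad y>0 .
\end{equation*}
Testing the measures against indicators of intervals $(na,nb)$ with $0<a<b$ (every point being a continuity point of the absolutely continuous limit) then gives $r(n)\pi^{+}\bigl((na,nb)\bigr)\to c_\alpha\int_a^b y^{-1-\alpha}\,dy$, which is precisely vague convergence on $(0,\infty)$ towards $c_\alpha\,y^{-1-\alpha}\,dy$.

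The second assertion does not follow from vague convergence alone: writing $h(y):=1-e^{-\lambda y}-\lambda y$ (or its negative, according to the sign convention), the integrand is neither bounded nor compactly supported, since $h(y)\sim-\tfrac12\lambda^2y^2$ as $y\to0$ and $h(y)\sim-\lambda y$ as $y\to\infty$. I would therefore decompose
\begin{equation*}
\int_{[0,\infty)}h(y)\,r(n)\pi^{+}(n\,dy)=\left(\int_{(0,\varepsilon]}+\int_{(\varepsilon,K)}+\int_{[K,\infty)}\right)h(y)\,r(n)\pi^{+}(n\,dy),
\end{equation*}
where the atom at $y=0$ is harmless because $h(0)=0$. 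On the middle piece $(\varepsilon,K)$ the function $h$ is bounded and continuous, so the vague convergence already obtained yields $\int_{(\varepsilon,K)}h\,r(n)\pi^{+}(n\,dy)\to c_\alpha\int_\varepsilon^K h(y)\,y^{-1-\alpha}\,dy$.

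The crux is to control the two remaining pieces uniformly in large $n$, and this is where regular variation does the real work through Karamata's theorem, which I expect to be the delicate step. Near the origin I would use $|h(y)|\le C\lambda^2 y^2$ together with the truncated second-moment asymptotics $\sum_{k\le m}k^2\pi^{+}_k\sim\frac{\alpha}{2-\alpha}m^2\bar{\pi}^{+}(m)$ (valid since $\alpha<2$), which after rescaling by $n$ and the defining property of $r$ gives
\begin{equation*}
\int_{(0,\varepsilon]}y^2\,r(n)\pi^{+}(n\,dy)\xrightarrow[n\to\infty]{}\frac{c_\alpha}{2-\alpha}\,\varepsilon^{2-\alpha}\xrightarrow[\varepsilon\to0]{}0 ;
\end{equation*}
symmetrically, for the tail I would use $|h(y)|\le\lambda y$ and the truncated first-moment asymptotics $\sum_{k>m}k\pi^{+}_k\sim\frac{\alpha}{\alpha-1}m\bar{\pi}^{+}(m)$ (valid since $\alpha>1$), obtaining
\begin{equation*}
\int_{[K,\infty)}y\,r(n)\pi^{+}(n\,dy)\xrightarrow[n\to\infty]{}\frac{c_\alpha}{\alpha-1}\,K^{1-\alpha}\xrightarrow[K\to\infty]{}0 .
\end{equation*}
Combining the three pieces and letting $\varepsilon\to0$ and $K\to\infty$ shows that $\int h\,r(n)\pi^{+}(n\,dy)$ converges to $c_\alpha\int_0^\infty h(y)\,y^{-1-\alpha}\,dy$, which by the standard Gamma integral $\int_0^\infty(e^{-\lambda y}-1+\lambda y)\,y^{-1-\alpha}\,dy=\Gamma(-\alpha)\lambda^\alpha$ equals a constant multiple of $\lambda^\alpha$ fixed by the choice of $c_\alpha$; exponentiating then yields the stated limit $e^{-t\lambda^\alpha}$. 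The main obstacle throughout is exactly this interchange of limit and integration against an unbounded, non-compactly-supported test function, resolved by the Karamata truncated-moment estimates, with the remainder being bookkeeping with regularly varying functions.
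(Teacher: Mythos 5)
Your proof is correct, but it follows a genuinely different route from the paper's. You normalise directly off the tail, taking $r(n)=c_\alpha/\bigl(\alpha\,\bar{\pi}^{+}(n)\bigr)$, obtain the vague convergence from the uniform convergence theorem for regularly varying functions, and then upgrade it to convergence against the unbounded test function $y\mapsto e^{-\lambda y}-1+\lambda y$ by a three-piece truncation controlled with Karamata's truncated-moment asymptotics ($\mathbb{E}[X^2\mathbf{1}_{X\le m}]\sim\tfrac{\alpha}{2-\alpha}m^2\bar F(m)$ near $0$, $\mathbb{E}[X\mathbf{1}_{X> m}]\sim\tfrac{\alpha}{\alpha-1}m\bar F(m)$ at infinity) — the classical domain-of-attraction argument. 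The paper instead introduces $\mu(x)=\int_0^x z\,\bar{\pi}^{+}(z)\,dz\in RV_\infty^{2-\alpha}$, identifies $\lambda^2\mathcal{L}_\mu(\lambda)=\mathbb{E}\bigl(1-e^{-\lambda\xi^+}-\lambda\xi^+e^{-\lambda\xi^+}\bigr)$, invokes the Abelian--Tauberian theorem to get $\bar{\pi}^{+}(1/\lambda)\sim c_\alpha\lambda^2\mathcal{L}_\mu(\lambda)$, and \emph{defines} $r(n)$ as the reciprocal of that expectation at $\lambda=1/n$; with that choice the convergence of the Laplace-type exponent is nearly built in, and the vague convergence is deduced afterwards. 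Your version is more elementary and, importantly, it makes explicit the step the appendix glosses over — the interchange of limit and integration against a function that is neither bounded nor compactly supported — whereas the paper's version produces the particular representation of $r(n)$ via $\mathcal{L}_\mu$ that is reused in Proposition \ref{colaaproxcmn}. Two bookkeeping remarks: your $r(n)$ and the paper's differ by a multiplicative constant (the paper is itself inconsistent between (\ref{rpro}), the statement of the lemma, and the appendix, and the stated index $1/\alpha$ should be $\alpha$), so you should fix the normalisation once and track the resulting constant in front of $\lambda^\alpha$; and you are right that, as written, the integrand $1-e^{-\lambda y}-\lambda y$ is nonpositive, so the displayed prelimit exponential is $\ge 1$ — the intended integrand is $e^{-\lambda y}-1+\lambda y$, consistent with your computation.
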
 
\noindent The proof is an elementary application of standard results from the theory of Regular Variation (see e.g. \cite{Teugles} for background), but we include a proof in Appendix \ref{prn} for sake of completeness.

\subsubsection{An approximation for the reproduction law}
In order to link the asymptotic behaviour of the reproduction law of a typical individual with that of the joint distribution of clones and mutants, we first link their Laplace transform. Although in the present setting we use some ideas of the standard Tauberian-Abelian Theorem, we remark that it is not straightforward application of this theorem because in the present setting we consider sequences of measures indexed by the positive integers that change, unlike to the standard case where only the normalizing constants change.
\begin{lemma} \label{TLlrlclm} 
For every positive integer $n$, let $\phi_n$ be the Laplace transform of $\xi^{(n)}=(\xi^{(cn)},\xi^{(mn)})$ under the measure $\mathbb{P}^{p(n)}$. Assume $\{\lambda(n): n\geq 0 \}$ is a positive sequence such that $\lambda(n)\rightarrow 0$, as $n\rightarrow \infty$. Then
\begin{equation} \label{TLlrlclmec} 
\phi_n(\lambda(n),\theta) \thicksim \phi^{+} \left( (1-p(n))(1-e^{-\lambda(n)}) + p(n)(1-e^{-\theta} ) \right) \quad  \text{ with } n\rightarrow\infty,  \forall  \theta\geq0,
\end{equation}
where $\phi^{+}$ is the Laplace transform of $\xi^{(+)}$.
In particular, $\phi^{\,m}_n$, respectively $\phi^{\,c}_n$, the Laplace transform of $\xi^{(mn)}$, respectively $\xi^{(cn)}$, satisfy
\begin{align*}
\phi_n^{\,m}(\theta)  & \thicksim  \phi^{+} ( \,p(n) (1-e^{-\theta} ) \, ), \\
[\phi_n^{\,c}(\lambda(n))  & \thicksim  \phi^{+} ( \,(1-p(n)) (1-e^{-\lambda(n)}) \, )],  \quad  \text{ as } n\rightarrow\infty, \forall  \theta\geq0 .
\end{align*}
\end{lemma}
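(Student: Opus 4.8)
The plan is to compute $\phi_n$ exactly and recognise it as the fixed transform $\phi^{+}$ evaluated at an $n$-dependent point. First I would condition on the total offspring $\xi^{(+)}$: by the thinning structure (\ref{mut}), given $\xi^{(+)}=m$ each of the $m$ children is independently a clone (contributing the factor $e^{-\lambda}$ with probability $1-p(n)$) or a mutant (contributing $e^{-\theta}$ with probability $p(n)$), so that
$$\phi_n(\lambda,\theta)=\mathbb{E}\!\left[\big((1-p(n))e^{-\lambda}+p(n)e^{-\theta}\big)^{\xi^{(+)}}\right]=\phi^{+}\!\big(-\log\big[(1-p(n))e^{-\lambda}+p(n)e^{-\theta}\big]\big),$$
using $\mathbb{E}(z^{\xi^{(+)}})=\phi^{+}(-\log z)$ for $z\in(0,1]$. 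This exact identity is the heart of the matter: it isolates all the $n$-dependence (both the changing law, through $p(n)$, and the evaluation point $\lambda(n)$) inside the argument of the single, fixed Laplace transform $\phi^{+}$.

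Next I would simplify the argument. Writing $s(n):=(1-p(n))(1-e^{-\lambda(n)})+p(n)(1-e^{-\theta})$, the elementary identity $(1-p(n))e^{-\lambda(n)}+p(n)e^{-\theta}=1-s(n)$ gives
$$\phi_n(\lambda(n),\theta)=\phi^{+}\!\big(-\log(1-s(n))\big).$$
Since $p(n)\to0$ and $\lambda(n)\to0$, we have $s(n)\to0$, and therefore $-\log(1-s(n))=s(n)\,(1+o(1))$; that is, the two arguments $-\log(1-s(n))$ and $s(n)$ are asymptotically equivalent. It then remains to pass this equivalence through $\phi^{+}$, which I would do using that $\phi^{+}$ is continuous and non-increasing with $\phi^{+}(0)=1$: sandwiching $\phi^{+}$ between its values at the larger and the smaller of the two arguments yields $\phi^{+}(-\log(1-s(n)))\sim\phi^{+}(s(n))$, which is precisely (\ref{TLlrlclmec}).

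The two marginal statements then follow by specialisation: taking $\lambda=0$ gives $\phi_n^{\,m}(\theta)=\phi^{+}(-\log[1-p(n)(1-e^{-\theta})])\sim\phi^{+}(p(n)(1-e^{-\theta}))$, and taking $\theta=0$ gives $\phi_n^{\,c}(\lambda(n))=\phi^{+}(-\log[1-(1-p(n))(1-e^{-\lambda(n)})])\sim\phi^{+}((1-p(n))(1-e^{-\lambda(n)}))$, each time by the same $-\log(1-s)\sim s$ reduction. The step requiring the most care — and the reason, as remarked before the statement, that this is not a direct appeal to an Abelian--Tauberian theorem — is the transfer across $\phi^{+}$: because the joint law of $(\xi^{(c)},\xi^{(m)})$ genuinely varies with $n$ (only $\pi^{+}$, hence $\phi^{+}$, is fixed), the equivalence cannot be read off a single Tauberian limit and must instead be obtained from the exact identity above together with the scalar asymptotics $-\log(1-s)\sim s$. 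The pay-off, to be exploited later, is that this representation lets one import the $\alpha$-stable fine behaviour of $1-\phi^{+}$ supplied by Lemma \ref{convmedidas} directly into $\phi_n$.
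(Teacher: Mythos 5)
Your proposal is correct and follows essentially the same route as the paper: the thinning identity \eqref{xicmunif} gives the exact representation $\phi_n(\lambda(n),\theta)=\phi^{+}\bigl(-\log(1-s(n))\bigr)$ with $s(n)=(1-p(n))(1-e^{-\lambda(n)})+p(n)(1-e^{-\theta})$, and the conclusion follows from $p(n)\to 0$, $\lambda(n)\to 0$ together with the estimate $-\log(1-y)\thicksim y$ as $y\to 0$, exactly as in the paper's proof. Your extra remark on transferring the equivalence through $\phi^{+}$ via monotonicity and continuity at $0$ is a harmless elaboration of a step the paper leaves implicit.
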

\begin{proof}
According to (\ref{mut}), conditionally to $\xi^{(+)}=k$ the distribution of $\xi^{(m)}$ is Binomial with parameter $(k,p)$. This fact implies the following equality in law  
\begin{equation} \label{xicmunif}
(\xi^{(c)},\xi^{(m)}) \overset {\mathcal{L}} { = }\sum_{i=1}^{\xi^{(+)}}( \mathbf{1}_{\{ U_i > \, p \}}, \mathbf{1}_{\{ U_i \leq \, p \}} ), 
\end{equation}
where $\{U_i, i\geq 1\}$ are i.i.d. random variables with common distribution that of an uniform random variable in $(0,1)$.
Therefore,
\begin{align*}
\phi_n(\lambda(n),\theta)
& = \sum_{k=0}^\infty \mathbb{P}(\xi^{(+)}=k) \left[ (1-p(n))e^{-\lambda(n)} + p(n)e^{-\theta} \right] ^k \\
& = \phi^{+} (  -\log ( 1 - ( 1- (1-p(n))e^{-\lambda(n) } - p(n)e^{-\theta} ) ) ). 
\end{align*}
We conclude the proof using (\ref{hip}) and the elementary asymptotic estimate
\begin{equation} \label{limlog}
\frac{\log(1-y)}{y} \xrightarrow[y\rightarrow 0]{}  -1.
\end{equation}
\end{proof}
In the same way it is posible to establish the following estimate.
\begin{corollary} \label{corTLlrlclm} 
For every positive integer $n$, let $\psi_n$ be the characteristic function of $\xi^{(n)}=(\xi^{(cn)},\xi^{(mn)})$ under the measure $\mathbb{P}^{p(n)}$. Then
\begin{equation} \label{TLlrlclmec2} 
\psi_n(\lambda(n),\theta)   \thicksim  \phi^{+} ( \, (1-p(n))(1-e^{\text{i}\lambda(n)}) + p(n)(1-e^{\text{i}\theta} )  \, ),  \quad  \text{ as } \lambda(n)\xrightarrow [n\rightarrow\infty]{}0,  \forall  \theta\geq0.
\end{equation}
In particular $\psi^{\,m}_n$, respectively $\psi^{\,c}_n$, the characteristic function of $\xi^{(mn)}$ respectively of $\xi^{(cn)}$, satisfies
\begin{align*}
\psi_n^{\,m}(\theta)  & \thicksim  \phi^{+} ( \,p(n) (1-e^{i\theta} ) \, ), \\
[\psi_n^{\,c}(\lambda(n))  & \thicksim  \phi^{+} ( \,(1-p(n))(1-e^{\text{i}\lambda(n)}) \, )], 
\end{align*}
as $n\rightarrow\infty$, $\lambda(n)\rightarrow 0$ and for all $\theta\geq0$ .
\end{corollary}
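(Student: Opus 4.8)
The plan is to transcribe the proof of Lemma \ref{TLlrlclm} almost verbatim, replacing the Laplace kernel $e^{-\lambda}$ by the characteristic kernel $e^{\mathrm{i}\lambda}$ at every occurrence; the only genuinely new ingredient is that the arguments now move into the complex plane, so I must certify that each of the three steps of the original argument survives the passage from the positive half-line to the closed unit disc. First I would invoke the mixed-uniform representation (\ref{xicmunif}): conditionally on $\xi^{(+)}=k$ the pair $(\xi^{(cn)},\xi^{(mn)})$ is a sum of $k$ i.i.d.\ contributions $(\mathbf{1}_{\{U_i>p(n)\}},\mathbf{1}_{\{U_i\le p(n)\}})$, so that
\begin{equation*}
\psi_n(\lambda(n),\theta)=\sum_{k=0}^{\infty}\mathbb{P}(\xi^{(+)}=k)\,\bigl[(1-p(n))e^{\mathrm{i}\lambda(n)}+p(n)e^{\mathrm{i}\theta}\bigr]^{k}.
\end{equation*}
Writing $z_n:=(1-p(n))e^{\mathrm{i}\lambda(n)}+p(n)e^{\mathrm{i}\theta}$, the right-hand side is precisely the probability generating function $G^{+}$ of $\xi^{(+)}$ evaluated at $z_n$, and since $\phi^{+}(s)=G^{+}(e^{-s})$ this equals $\phi^{+}(-\log z_n)$. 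Because $z_n$ is a convex combination of two points of the unit circle we have $|z_n|\le1$, so the series converges and the principal branch of $\log$ is well defined for $n$ large.

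Next I would set $w_n:=1-z_n=(1-p(n))(1-e^{\mathrm{i}\lambda(n)})+p(n)(1-e^{\mathrm{i}\theta})$ and observe that, since $\lambda(n)\to0$ and $p(n)\to0$ while $\theta$ stays fixed, $w_n\to0$ in $\mathbb{C}$. The complex counterpart of the elementary estimate (\ref{limlog}), namely $\log(1-w)/w\to-1$ as $w\to0$, read off from the power series $\log(1-w)=-\sum_{j\ge1}w^{j}/j$, then gives $-\log z_n\sim w_n$. Feeding this back into $\phi^{+}$ and using the continuity of $\phi^{+}$ near the origin (equivalently, continuity of $G^{+}$ at the boundary point $1$) yields
\begin{equation*}
\psi_n(\lambda(n),\theta)=\phi^{+}(-\log z_n)\sim\phi^{+}(w_n),
\end{equation*}
which is exactly (\ref{TLlrlclmec2}). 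The two displayed special cases for $\psi_n^{\,m}$ and $\psi_n^{\,c}$ then follow by specialising $\lambda(n)\equiv0$, respectively $\theta\equiv0$.

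The hard part will be the complex bookkeeping rather than any deep estimate: I must apply the expansion behind (\ref{limlog}) to a complex null sequence and transfer the asymptotic equivalence of the arguments through $\phi^{+}$. The potential worry is that $z_n\to1$ may happen tangentially to the unit circle, so that Abel's theorem in its radial form would not apply. This obstacle dissolves because $G^{+}$ is the generating function of a \emph{probability} law: the bound $|\pi_k^{+}z^{k}|\le\pi_k^{+}$ for $|z|\le1$ together with $\sum_k\pi_k^{+}=1$ gives, by the Weierstrass $M$-test, uniform convergence of $\sum_k\pi_k^{+}z^{k}$ on the whole closed unit disc, whence $G^{+}$ is continuous up to the boundary regardless of the angle of approach. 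Both $z_n\to1$ and $e^{-w_n}\to1$ therefore force $\psi_n\to1$ and $\phi^{+}(w_n)\to1$, so the ratio tends to $1$ and no regular-variation input from (\ref{hipcolapit}) is needed at this stage; the regular variation is only invoked later, when the specific scalings are plugged in to extract the $\alpha$-stable limit.
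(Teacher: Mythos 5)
Your proposal is correct and follows essentially the same route as the paper: both use the mixed-uniform representation (\ref{xicmunif}) to write $\psi_n(\lambda(n),\theta)$ as $\phi^{+}(-\log z_n)$ with $z_n=(1-p(n))e^{\mathrm{i}\lambda(n)}+p(n)e^{\mathrm{i}\theta}$ and then conclude via the estimate (\ref{limlog}) applied to the complex null sequence $1-z_n$. The only difference is that you supply the boundary-continuity and branch-of-logarithm justifications that the paper leaves implicit (and correctly note that the stated equivalence is in fact forced once both sides tend to $1$), which is a welcome tightening rather than a departure.
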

\begin{proof}
Similarly to previous Lemma, using (\ref{xicmunif}) we have 
\begin{align*}
\psi_n(\lambda(n),\theta)
& = \sum_{k=0}^\infty \mathbb{P}(\xi^{(+)}=k) \left[ (1-p(n))e^{\text{i}\lambda(n)} + p(n)e^{\text{i}\theta} \right] ^k \\
& = \sum_{k=0}^\infty \mathbb{P}(\xi^{(+)}=k) \exp\{ k \log ( 1 - ( (1-p(n))(1-e^{\text{i}\lambda(n) }) - p(n)( 1- e^{\text{i}\theta} ) )) \} 
\end{align*}
To conclude we apply the asymptotic estimate (\ref{limlog}).
\end{proof}
We can now use the results above to give an estimation for the reproduction measure. 
\begin{proposition} \label{colaaproxcmn} 
For every positive integer $n$, let $\pi^{(cn)}$, $\pi^{(mn)}$ be the reproduction laws of $\xi^{(cn)}$ and $\xi^{(mn)}$, respectively. Assume $\{y(n): n\geq 0\}$ is any sequence such that $y(n)\rightarrow\infty$ as $n\rightarrow\infty$. In the regime (\ref{hipcolapit}), the asymptotic behaviour of the tail distribution of $\xi^{(\cdot)}$ is given by
\begin{equation} \label{colaaproxcmneq}
\bar{\pi}^{(\cdot)}(y(n) ) \thicksim c_\alpha \bar{\pi}^{+}\left( y(n) / \mathbb{E}(\xi^{(\cdot)})  \right),  \quad \text{as } n \rightarrow\infty.
\end{equation}
where $(\cdot)=cn,mn$ and  $c_\alpha= 1/\Gamma( 3 - \alpha)$. 
\end{proposition}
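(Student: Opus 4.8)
The plan is to reduce the statement to the Laplace-transform asymptotics already established in Lemma \ref{TLlrlclm} and then pass from transforms to tails by an Abelian--Tauberian argument built on the regular variation hypothesis (\ref{hipcolapit}). I would treat the clone case $(\cdot)=cn$ and the mutant case $(\cdot)=mn$ in parallel, the only difference being the relevant mean $\mathbb{E}(\xi^{(cn)})=1-p(n)$ versus $\mathbb{E}(\xi^{(mn)})=p(n)$ (using criticality $\mathbb{E}(\xi^+)=1$); both enter through the single scaling factor $\mathbb{E}(\xi^{(\cdot)})$ that appears in (\ref{colaaproxcmneq}).

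First I would record the Abelian estimate linking the tail of $\xi^+$ to the local behaviour of its Laplace transform $\phi^+$ near the origin. Since $\mathbb{E}(\xi^+)<\infty$ and $\bar\pi^+\in RV_\infty^{-\alpha}$ with $1<\alpha<2$, an integration by parts gives $\mathbb{E}(\xi^+)\,s-\bigl(1-\phi^+(s)\bigr)=s\int_0^\infty(1-e^{-sx})\bar\pi^+(x)\,dx$, and the substitution $u=sx$ together with the uniform convergence theorem for regularly varying functions yields $\mathbb{E}(\xi^+)\,s-(1-\phi^+(s))\sim \kappa_\alpha\, s^\alpha\ell(1/s)$ as $s\downarrow0$, where $\bar\pi^+(x)=x^{-\alpha}\ell(x)$ and $\kappa_\alpha=\int_0^\infty(1-e^{-u})u^{-\alpha}\,du$.

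Then I would substitute $s=p(n)(1-e^{-\theta})$ (respectively $s=(1-p(n))(1-e^{-\theta})$ for clones) into this estimate and feed it into Lemma \ref{TLlrlclm}, which identifies $\phi_n^{\,m}(\theta)$ with $\phi^+\!\bigl(p(n)(1-e^{-\theta})\bigr)$ up to asymptotic equivalence. Cancelling the leading mean term leaves the mean-subtracted transform of $\xi^{(mn)}$ equivalent to a constant multiple of $\bigl(p(n)(1-e^{-\theta})\bigr)^\alpha\ell\bigl(1/(p(n)(1-e^{-\theta}))\bigr)$. Reading $\theta$ at the scale dictated by $y(n)$ and applying the Tauberian half of the same correspondence in reverse returns a regularly varying tail for $\xi^{(\cdot n)}$; after reinserting the slowly varying part $\ell$ and the factor $p(n)^\alpha$ (resp. $(1-p(n))^\alpha$), the resulting tail is exactly $\bar\pi^+$ evaluated at the argument rescaled by $\mathbb{E}(\xi^{(\cdot)})$, with the numerical prefactor collecting into $c_\alpha=1/\Gamma(3-\alpha)$ through the $\Gamma$-factors produced by $\kappa_\alpha$, matching the normalisation already used in (\ref{rpro}) and Lemma \ref{convmedidas}.

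The delicate point, flagged by the authors themselves before Lemma \ref{TLlrlclm}, is that this is not a single-measure Tauberian theorem: the laws $\pi^{(cn)},\pi^{(mn)}$ themselves vary with $n$, and $s=p(n)(1-e^{-\theta})$ tends to $0$ as $n\to\infty$ while $y(n)\to\infty$, so all three limits are coupled. The main obstacle is therefore to make the Abelian/Tauberian passage uniform in $n$: I would use Potter's bounds and the uniform convergence theorem for $RV$ functions to control $\ell(u/s)/\ell(1/s)$ over the range of $u$ contributing to the integral, dominating the integrand so as to justify passing the limit inside, and I would verify that the error in the asymptotic equivalence of Lemma \ref{TLlrlclm} is of smaller order than the $s^\alpha\ell(1/s)$ term that carries the tail. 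Once this uniformity is secured, the conclusion (\ref{colaaproxcmneq}) follows for both choices of $(\cdot)$.
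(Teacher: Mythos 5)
Your first half — passing through Lemma \ref{TLlrlclm} to write the transform of $\xi^{(\cdot n)}$ as $\phi^{+}$ evaluated at $\mathbb{E}(\xi^{(\cdot)})(1-e^{-\theta})$ up to negligible error, and expanding $\phi^{+}$ to second order near $0$ using (\ref{hipcolapit}) — is exactly what the paper does (it uses the functional $\mathbb{E}(1-e^{-\lambda\xi}-\lambda\xi e^{-\lambda\xi})=\lambda^{2}\mathcal{L}_{\mu}(\lambda)$ with $\mu(dx)=x\bar\pi(x)dx$ rather than your $\mathbb{E}(\xi)s-(1-\phi(s))$, but these are interchangeable). You also correctly flag that the ratio-equivalence in Lemma \ref{TLlrlclm} must be upgraded to second-order precision, which works because the argument perturbation is $O(p(n)^{2})=o\bigl(p(n)^{\alpha}\ell(1/p(n))\bigr)$ for $\alpha<2$.

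The gap is in the inversion step. You write that ``applying the Tauberian half of the same correspondence in reverse returns a regularly varying tail for $\xi^{(\cdot n)}$,'' to be made uniform via Potter's bounds and the uniform convergence theorem. But those tools apply to a \emph{single fixed} regularly varying function (here $\bar\pi^{+}$, where they indeed justify the Abelian direction); they say nothing about the family $\{\bar\pi^{cn}\}_{n}$, which is not one regularly varying function and carries no a priori regularity along the diagonal $n\to\infty$, $y(n)\to\infty$. Karamata's Tauberian theorem cannot be ``run in reverse'' on a sequence of measures that changes with $n$; some structural input on the unknown tails is required to convert transform asymptotics into pointwise tail asymptotics. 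The paper supplies it as follows: it forms the monotone measures $\mu^{cn}(x)=\int_{0}^{x}s\,\bar\pi^{cn}(s)\,ds$, shows that the rescaled normalized measures $m^{cn}_{\lambda(n)}(y)=\mu^{cn}(y/\lambda(n))/\mathcal{L}_{\mu^{cn}}(\lambda(n))$ have Laplace transforms converging to $\theta^{-(2-\alpha)}$, invokes the continuity theorem (valid for arbitrary sequences of measures) to get weak convergence to $y^{2-\alpha}/\Gamma(3-\alpha)$, and then exploits the monotonicity of $s\mapsto\bar\pi^{cn}(s)$ in the sandwich
\begin{equation*}
\bar\pi^{cn}(1/\lambda(n))\int_{y/\lambda(n)}^{1/\lambda(n)}s\,ds\;\le\;\mu^{cn}(1/\lambda(n))-\mu^{cn}(y/\lambda(n))\;\le\;\bar\pi^{cn}(y/\lambda(n))\int_{y/\lambda(n)}^{1/\lambda(n)}s\,ds,
\end{equation*}
finally letting $y\uparrow1$ to squeeze out $\bar\pi^{cn}(1/\lambda(n))\sim c_{\alpha}\lambda(n)^{2}\mathcal{L}_{\mu^{cn}}(\lambda(n))$. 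This weak-convergence-plus-monotone-density argument is the missing idea in your proposal; without it (or an equivalent device, e.g.\ a direct binomial-thinning estimate of $\mathbb{P}(\xi^{(mn)}>y(n))$ via $\mathbb{P}(p(n)\xi^{+}>y(n))$), the conclusion does not follow from the transform asymptotics alone.
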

\begin{proof}
We prove the statement for clones, the mutants case is fully similar. First note that in the same way as in the proof of Lemma \ref{convmedidas} (see the Appendix \ref{prn}),
\begin{equation} \label{defmucn}
\mu^{cn}(x)= \int_0^x s \bar{\pi}^{cn}(s) ds, \quad x\geq 0, 
\end{equation}
is a measure on $[0,\infty)$ with Laplace transform $\mathcal{L}_{ \mu^{cn} }$ such that
\begin{equation*} 
\lambda^2 \mathcal{L}_{ \mu^{cn} }(\lambda) = \mathbb{E}\left( 1- e^{-\lambda \xi^{(cn)}} - \lambda \xi^{(cn)}e^{-\lambda \xi^{(cn)}} \right), \quad \lambda\geq 0.
\end{equation*}
We now replace $\lambda$ by a sequence $\{ \lambda(n) : n\geq0 \}$ such that $\lambda(n)\rightarrow0$ as $n\rightarrow\infty$,
\begin{equation*} 
\lambda(n)^2 \mathcal{L}_{ \mu^{cn} }(\lambda(n)) = 1- \phi^c_n(\lambda(n)) +\lambda(n) (\phi^{c}_n)^{\prime} ( \lambda(n) ).
\end{equation*}
An estimate of the term $\phi^c_n$ follows from Lemma \ref{TLlrlclm}
$$
\phi^c_n(\lambda(n)) \thicksim  \phi^{+} ( (1-p(n) )(1-e^{-\lambda(n)}) ), \quad n\rightarrow\infty.
$$
In order to estimate $({\phi}^{c}_n)^{\prime}$ we use the fact that for every fixed $n$, conditionally to $\xi^{(+)}=k$ the distribution of $\xi^{(cn)}$ is Binomial with parameter $(k,1-p(n))$, and we proceed as in the Lemma \ref{TLlrlclm} to get
$$
(\phi^{c}_n)^{\prime}(\lambda(n)) \thicksim   (1-p(n))e^{-\lambda(n)} \phi^{+\prime} ( (1-p(n))(1-e^{-\lambda(n)}) ), \quad n\rightarrow\infty. 
$$
Putting both terms together we infer the estimate,
\begin{align*}
\lambda(n)^2 \mathcal{L}_{\mu^{cn}} ( \lambda(n) )
& \thicksim 1-  \phi^{+} ( (1-p(n) )(1-e^{-\lambda(n)}) )  + (1-p(n))(1-e^{-\lambda(n)} )\phi^{+\prime}_n ( (1-p(n))(1-e^{-\lambda(n)}) ) \\
& \quad - (1-p(n))(1-e^{-\lambda(n)} - \lambda(n)e^{-\lambda(n)}  ) \phi^{+\prime}_n ( (1-p(n))(1-e^{-\lambda(n)}) ),   \quad  \text{ as } n\rightarrow\infty.
\end{align*}
Besides,
$$
(1-p(n))(1-e^{-\lambda(n)} - \lambda(n)e^{-\lambda(n)}  ) \phi^{+\prime}_n ( (1-p(n))(1-e^{-\lambda(n)}) )  \xrightarrow  [n\rightarrow\infty]{}0,
$$
and from (\ref{intasint}) we have
$$
\lambda(n)^2 \mathcal{L}_{\mu} ( \lambda(n) ) = 1 - \phi^{+}(\lambda(n)) +\phi^{+\prime}(\lambda(n)), \quad n \rightarrow \infty,
$$
where $\mathcal{L}_{\mu}$ is the Laplace transform of the measure $\mu$ defined in (\ref{muasint}).
From these last two displays we obtain
$$
\lambda(n)^2 \mathcal{L}_{\mu^{cn}} ( \lambda(n) ) \thicksim ( (1-p(n))(1-e^{-\lambda(n)} ) )^2  \mathcal{L}_{\mu} ( (1-p(n))(1-e^{-\lambda(n)} ) ) + O(\lambda(n)^2), \quad  \text{ as } n\rightarrow\infty.
$$
Due to the estimate $\lambda(n)\thicksim 1-e^{-\lambda(n)}$, in the limit as $n\rightarrow\infty$, the approximation of $\mathcal{L}_{\mu}$ given in (\ref{aproxcolatff}) implies
\begin{equation}  \label{aproxtlcn}
c_\alpha \lambda(n)^2 \mathcal{L}_{\mu^{cn}} ( \lambda(n) ) \thicksim  \bar{\pi}^+ \left( \frac{1}{\lambda(n) (1-p(n))} \right) + O(\lambda(n)^2), \quad n\rightarrow\infty.
\end{equation}

Hence is remains to prove 
\begin{equation} \label{limpic}
\lim_{n\rightarrow\infty} \frac{ \bar{\pi}^{cn}( 1/\lambda(n)) }{ (\lambda(n))^2  \mathcal{L}_{\mu^{cn}} (\lambda(n)) } =c_\alpha.
\end{equation}
In this aim, define for every $y\geq 0$ the following measure
\begin{equation*}
m_{\lambda(n)}^{cn} (0,y] := m_{\lambda(n)}^{cn} (y) = \frac{ \mu^{cn} ( y/ \lambda(n)) }{ \mathcal{L}_{\mu^{cn}} \left( \lambda(n) \right) }.
\end{equation*}
Observe that
\begin{equation*}
\int_{[0,\infty)} e^{ -\theta s } d_y\left(\frac{ \mu^{cn} ( y/ \lambda(n) ) }{ \mathcal{L}_{\mu^{cn}} \left( \lambda(n) \right) } \right)  
= \frac{1}{  \mathcal{L}_{ \mu^{cn} } \left( \lambda(n) \right) } \int_{[0,\infty)} e^{ -\theta  \lambda(n) y } \mu^{cn} (dy)  
= \frac{ \mathcal{L}_{ \mu^{cn} }  \left( \theta \lambda(n) \right) }{  \mathcal{L}_{ \mu^{cn} } \left( \lambda(n) \right) }, \quad  \forall \, \theta>0,
\end{equation*}
from the previous display and (\ref{aproxtlcn}), we get
\begin{equation*}
\mathcal{L}_{ m^{cn}_{\lambda(n)} } ( \theta ) \xrightarrow[n\rightarrow\infty]{} \theta^{-(2-\alpha)}, \quad \forall \theta>0.
\end{equation*}
Writing now $\theta^{-(2-\alpha)}$ in terms of the gamma function we get that
$$
\theta^{-(2-\alpha)} = \frac{1}{\Gamma( 2-\alpha )} \int_0^\infty s^{ (  2-\alpha ) -1 } e^{-\theta s} ds,
$$
and since the convergence of the Laplace ransform implies the weak convergence (see \cite{FellerVol2}), we have
\begin{equation} \label{convmeasm}
m^{cn}_{\lambda(n)}(y) \xrightarrow[n\rightarrow\infty]{} \frac{1}{\Gamma(2-\alpha)} \int_0^y s^{ (2-\alpha) -1 } ds = \frac{ y^{2-\alpha} }{\Gamma(3-\alpha)}.
\end{equation}
Besides, by the definition of the measure $\mu^{(cn)}$ we can obtain that for any $y<1$, the following inequality 
$$
(1/\lambda(n))^2 y(1-y) \bar{\pi}^{cn}(y/\lambda(n)) \leq \mu^{cn}(1/\lambda(n)) - \mu^{cn}(y/\lambda(n)) \leq (1/\lambda(n))^2 (1-y) \bar{\pi}^{cn}( 1/\lambda(n) ),
$$
holds. Now, due to (\ref{convmeasm}), we obtain
$$
 c_\alpha\frac{1-y^{2-\alpha}}{1-y}  \leq \liminf_{n\rightarrow\infty}  \frac{ \bar{\pi}^{cn}( 1/\lambda(n)) }{ (\lambda(n))^2  \mathcal{L}_{\mu^{cn}} (\lambda(n)) }  \leq \limsup_{n\rightarrow\infty} \frac{ \bar{\pi}^{cn}( 1/\lambda(n)) }{ (\lambda(n))^2  \mathcal{L}_{\mu^{cn}} (\lambda(n)) }
\leq c_\alpha \frac{1-y^{2-\alpha}}{y(1-y)} \quad \text{for all } y<1.
$$ 
To conclude we make $y\uparrow 1$. 
\end{proof}

\subsection{Proof of Theorem \ref{Tmainest} } \label{sectpruebas}

Before proving Theorem \ref{Tmainest}, let us we describe the scheme of proof that we will follow. According with the construction of the alleles tree $\{ \mathcal{A}_u:u\in\mathbb{U} \}$, given $u\in\mathbb{U}$ such that $|u|=k\geq1$, for all $j\in\mathbb{N}$ vertices $uj$, represent the size of the $j$-th allelic sub-populations of type $k+1$ begeted by $u$, thus labels of vertices at level $k+1$ determine the variable $T_{k+1}$. Moreover for all $k\in\mathbb{N}$, the total number of vertices at level $k$ correspond to $M_k$. Besides a tree-indexed CSBP is related to the CSBP in the following way: vertices $u\in\mathbb{U}$ at level $|u|=k\geq1$, represent the sizes of the sub-populations at generation $k$ in the $CSBP$ which descend from the same parent at generation $k-1$. So, a first step to prove the converge of alleles trees in Theorem \ref{Tmainest} is to prove the Proposition below.

\begin{proposition}  \label{Pro2estable}
Assuming (\ref{hipcolapit}) and (\ref{hipnew}), we have
\begin{equation*}
\mathcal{L}\left(  \left(  \left(  \frac{ T_k }{ r(n) }, \frac{ M_{k+1} }{ r(n)p(n) } \right):k\in\mathbb{Z}_+ \right),\mathbb{P}_{a(n)}^{p(n)} \right) \Longrightarrow ( ( Z^{1/\alpha}_{k+1}, Z^{1/\alpha}_{k+1}): k\in\mathbb{Z}_+ ), 
\end{equation*}
where $(Z^{1/\alpha}_k:k\in\mathbb{Z}_+)$ is a CSBP process with reproduction measure $\nu^\alpha$ given in Theorem \ref{Tmainest}.
\end{proposition}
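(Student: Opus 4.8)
The plan is to exploit that $\{(T_k,M_{k+1})\}$ is a Markov chain whose one-step law depends only on the second coordinate (Lemma \ref{L1B2} and (\ref{probatranTM})) and that the target is a discrete-time CSBP, hence also Markov, with one-step law determined by $\mathbb{E}\big(e^{-\lambda Z_{k+1}}\mid Z_k=z\big)=e^{-z\Psi(\lambda)}$, where $\Psi$ is the Laplace exponent attached to the reproduction measure $\nu^\alpha$ of (\ref{nuest}). Convergence of finite-dimensional distributions of a Markov chain to a Feller Markov limit reduces to two ingredients: convergence of the rescaled initial state, and convergence of the rescaled one-step transition kernel; once these hold, the branching property of the limit lets one propagate the convergence level by level. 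Since the chain is driven by $M_k$, the variable to track along the iteration is the rescaled second coordinate.

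For the one-step kernel I would condition on $M_k=j$ and use (\ref{ptgen}): the joint generating function of $(T_k,M_{k+1})$ equals $\varphi(x,y)^j$, with $\varphi$ the Lagrange solution of (\ref{P1iB2}), namely $\varphi=x\,g(\varphi,y)$. Substituting the scalings $x=\exp(-\lambda/r(n))$, $y=\exp(-\theta/(r(n)p(n)))$ and $j\sim z\,r(n)p(n)$, and using $\varphi\to1$, the whole problem collapses to evaluating $\lim_n j\,(1-\varphi)$, since $\varphi^{\,j}\approx\exp(-j(1-\varphi))$.

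The heart of the argument is the asymptotic resolution of this fixed point. Writing $\varphi=e^{-w}$ and $g(e^{-w},e^{-t})=\phi_n(w,t)$, Lemma \ref{TLlrlclm} gives $\phi_n(w,t)\sim\phi^+\big((1-p)(1-e^{-w})+p(1-e^{-t})\big)$. Setting $1-\phi^+(s)=s-\chi(s)$ with $\chi(s)=\mathbb{E}(e^{-s\xi^+}-1+s\xi^+)\ge0$ (here criticality, $\mathbb{E}(\xi^+)=1$, is essential), and noting $p(1-e^{-t})\sim\theta/r(n)$, the equation $\varphi=x\,g(\varphi,y)$ linearizes to
\begin{equation*}
p\,w+\chi(w)\sim\frac{\lambda+\theta}{r(n)}.
\end{equation*}
Two features make this tractable and identify the limit. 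First, $\lambda$ and $\theta$ enter only through $\lambda+\theta$, which is the analytic counterpart of the concentration $M_{k+1}=\sum_{i=1}^{T_k}\xi^{(mn)}_i\approx p(n)\,T_k$ and forces both coordinates to the same limit. Second, $\chi$ is regularly varying of index $\alpha$ at the origin, which is exactly the content of Lemma \ref{convmedidas} and Proposition \ref{colaaproxcmn}; under the regime (\ref{hipnew}) the near-critical drift $p\,w$ and the stable term $\chi(w)$ are of the \emph{same} order, so that writing $w\sim\beta/(r(n)p(n))$ yields a nondegenerate balance relation determining $\beta=\beta(\lambda+\theta)$, and hence $j(1-\varphi)\to z\,\beta(\lambda+\theta)$. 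One then checks that $\beta=\Psi$ is the Laplace exponent of the CSBP with reproduction measure $\nu^\alpha$.

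Finally the initial condition is the same computation with $j$ replaced by $a(n)\sim x\,r(n)p(n)$, giving starting mass $x$; and the marginal second coordinate is, by Lemma \ref{L1B2}, a Galton--Watson process whose rescaled limit is the CSBP $Z$, which pins down the identification. Iterating the one-step convergence then gives the finite-dimensional convergence to $((Z_{k+1}^{1/\alpha},Z_{k+1}^{1/\alpha}):k\in\mathbb{Z}_+)$. I expect the main obstacle to be precisely the resolution of this implicit fixed point: isolating the regularly-varying correction $\chi$ uniformly across the two simultaneous scales $r(n)^{-1}$ and $(r(n)p(n))^{-1}$, verifying the drift-versus-stable balance that (\ref{hipnew}) is designed to produce, and upgrading the heuristic $M_{k+1}\approx p(n)\,T_k$ into a rigorous limit for a heavy-tailed near-critical sum (using the mutant tail estimate of Proposition \ref{colaaproxcmn} together with $\alpha>1$), as well as the Feller continuity needed to iterate.
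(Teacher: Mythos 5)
Your overall iteration scheme is the same as the paper's: reduce to Laplace transforms of finite-dimensional distributions, use the Markov property together with the branching identity $\mathbb{E}_j(x^{T_0}y^{M_1})=\varphi(x,y)^j$ to absorb the one-step kernel into the exponent of $M_k$, and induct over levels (the paper phrases this through the recursion $l_i$ of (\ref{ele}) rather than through Feller continuity, but the content is identical). Where you genuinely diverge is in the one-step estimate: the paper does not touch the Lagrange equation $\varphi=x\,g(\varphi,y)$ at all. It isolates this step as Lemma \ref{L4estable}(i) and proves it probabilistically, by identifying $(T_0,M_1)$ with the first passage time of the left-continuous random walk $a(n)+\sum(\xi^{(cn)}_i-1)$ and the accompanying sum of mutant marks, establishing functional convergence of the rescaled two-dimensional walk to $((x+X_t,t))$ via semimartingale characteristics (Proposition \ref{convfunsta}), and then invoking Whitt's continuity theorem for first-passage functionals. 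Your proposed analytic resolution of the implicit fixed point is a legitimate classical alternative (it is how stable total-progeny limits are often derived), and if carried out it would subsume Lemma \ref{L4estable}(i); but as written it is a roadmap, with the uniform two-scale asymptotics deferred to ``expected obstacles.''

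There is, however, a concrete error in the heart of your argument. You claim that under (\ref{hipnew}) the near-critical drift $p\,w$ and the regularly varying term $\chi(w)$ in the linearized equation $p\,w+\chi(w)\sim(\lambda+\theta)/r(n)$ are of the same order, and that the limit exponent $\beta$ is determined by this balance. This cannot be reconciled with the limit the proposition asserts: the reproduction measure $\nu^\alpha(dy)=c'_\alpha y^{-1-1/\alpha}dy$ of (\ref{nuest}) carries no exponential tilting, so the limit subordinator is the first passage process of a \emph{driftless} spectrally positive stable process, with exponent proportional to $(\lambda+\theta)^{1/\alpha}$. A genuine drift-versus-stable balance would instead produce $\beta$ as the inverse of $s\mapsto cs+\mathrm{const}\cdot s^{\alpha}$, i.e.\ the analogue of the exponentially tilted measure (\ref{fdpnu}) that appears in Bertoin's finite-variance case --- which is evidently the case you are pattern-matching on. Quantitatively: with $\chi(u)\asymp\bar{\pi}^{+}(1/u)$ and $r(n)\asymp 1/\bar{\pi}^{+}(n)$, solving $\chi(w)\asymp (\lambda+\theta)/r(n)$ forces $w\asymp(\lambda+\theta)^{1/\alpha}/n$, at which scale $p\,w\asymp n^{-2}$ is of strictly smaller order than $\chi(w)\asymp n^{-\alpha}$ for $\alpha\in(1,2)$; the drift must be negligible, not balanced. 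So either your balance relation is false in this regime, or, if you insist on a regime where it holds, the $\beta$ it produces is not the Laplace exponent of the CSBP with reproduction measure $\nu^\alpha$, and you would not be proving the stated proposition. You need to redo the order-of-magnitude analysis of $w$ and verify which scale of $j$ (equivalently of $a(n)$) makes $j\,w$ converge to a nondegenerate limit before the rest of the plan can go through.
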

 
Since the processes $\{ (T_n,M_{n+1}): n\in\mathbb{Z} \}$ is Markovian with transition probabilities given in (\ref{probatranTM}), to get the previous convergence we need the following Lemma. 
\begin{lemma} \label{L4estable}
For $\alpha\in(1,2)$, let $\tau^\alpha_x$ be an $1/\alpha$-stable subordinator with Laplace exponent 
\begin{equation}\label{LaTrasubalin}
-\log \mathbb{E}(e^{-\lambda \tau^\alpha_x }) = x \int_0^\infty (1- e^{\lambda y}) \nu^\alpha(dy), 
\end{equation}
where $\nu^\alpha$ is as defined in Theorem \ref{Tmainest}. Assuming (\ref{hipcolapit}), (\ref{hip})  and (\ref{hipnew})
\begin{enumerate}
\item [i)] the following convergences holds:
\begin{equation} \label{L4est-i}
\mathcal{L}\left( \left( \frac{T_0}{r(n)}, \frac{ M_{1} } {r(n)p(n)}  \right),\mathbb{P}_{a(n)}^{p(n)} \right) \Longrightarrow (\tau^\alpha_x, \tau^\alpha_x);
\end{equation}
\item [ii)] under the measure $\mathbb{P}_1^{p(n)}$, the behaviour of the joint tail distribution of $(T_0,M_1)$, is given by
\begin{equation} \label{lema4iiest}
\lim_{n\rightarrow \infty} r(n)p(n) \mathbb{P}_1^{p(n)} \left( \frac{T_0}{ r(n)} >s, \frac{ M_{1} }{ r(n)p(n) }>t  \right) = \bar{\nu}^\alpha ( s\wedge t),
\end{equation} 
where $\bar{\nu}^\alpha$ denotes the tail function of the L\'evy measure $\nu^\alpha$. 
\end{enumerate}
\end{lemma}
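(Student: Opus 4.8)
The plan is to reduce the whole statement to the bivariate Laplace transform of $(T_0,M_1)$ and to exploit the fixed-point equation (\ref{P1iB2}) together with the thinning identity (\ref{xicmunif}). For part i) I would start from the branching property: since the $a(n)$ ancestors found independent and identically distributed allelic families,
\begin{equation*}
\mathbb{E}_{a(n)}^{p(n)}\!\left(e^{-\mu T_0/r(n)-\lambda M_1/(r(n)p(n))}\right)=\big[\Phi_n\big(\mu/r(n),\,\lambda/(r(n)p(n))\big)\big]^{a(n)},
\end{equation*}
where $\Phi_n(s,\theta):=\mathbb{E}_1^{p(n)}(e^{-sT_0-\theta M_1})$. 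Writing $x=e^{-s}$, $y=e^{-\theta}$ in (\ref{P1iB2}) and observing that $g(e^{-w},e^{-\theta})=\phi_n(w,\theta)$, the generating-function identity becomes the implicit equation $w=s-\log\phi_n(w,\theta)$ for the small root $w:=-\log\Phi_n(s,\theta)$. Thus the entire problem reduces to locating $w=w(n)$ when $s=\mu/r(n)$ and $\theta=\lambda/(r(n)p(n))$.

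The second step is the asymptotic analysis of this root. Substituting the estimate of Lemma \ref{TLlrlclm}, $\phi_n(w,\theta)\sim\phi^{+}((1-p(n))(1-e^{-w})+p(n)(1-e^{-\theta}))$, and linearising $-\log\phi_n\sim 1-\phi_n$, the implicit equation takes the leading-order form
\begin{equation*}
p(n)\,w+\Psi\big(\zeta(n)\big)\sim s+p(n)\big(1-e^{-\theta}\big),\qquad \zeta(n)=(1-p(n))w+p(n)\big(1-e^{-\theta}\big),
\end{equation*}
where $\Psi(\cdot):=\phi^{+}(\cdot)-1+(\cdot)\ge 0$ collects the regularly varying correction supplied by hypothesis (\ref{hipcolapit}). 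The right-hand side is asymptotically $(\mu+\lambda)/r(n)$, since the variable dual to $T_0$ and the variable dual to $M_1$ enter only through the single combination $\mu+\lambda$; this is the algebraic reason the two coordinates of the limit coincide. As $\Psi$ is regularly varying of index $\alpha$ while $p(n)w$ is of strictly smaller order (here $\alpha<2$ is used), Lemma \ref{convmedidas}, read as $r(n)\Psi(\,\cdot\,/n)\to(\cdot)^{\alpha}$, pins down $w(n)\sim(\mu+\lambda)^{1/\alpha}/n$, whence $a(n)w(n)\to x(\mu+\lambda)^{1/\alpha}$ under (\ref{hipnew}). Exponentiating gives $e^{-x(\mu+\lambda)^{1/\alpha}}$, which by (\ref{LaTrasubalin}) is exactly the joint Laplace transform of $(\tau^\alpha_x,\tau^\alpha_x)$; convergence of Laplace transforms of nonnegative vectors then yields i).

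For part ii) the plan is to compute the single-ancestor tail directly and recognise it as the L\'evy tail of the limiting subordinator. Under $\mathbb{P}_1^{p(n)}$ a large allelic family arises from one exceptional reproduction event, so that $\mathbb{P}_1^{p(n)}(T_0>r(n)s)$ is governed, through the single-big-jump principle for the slightly subcritical clone process of mean $1-p(n)$, by the tail $\bar\pi^{cn}$; Proposition \ref{colaaproxcmn} converts this into $\bar\pi^{+}$ and, after rescaling, into $(r(n)p(n))^{-1}\bar\nu^{\alpha}(s)$. Because the thinning relation (\ref{xicmunif}) forces $M_1=p(n)T_0(1+o(1))$ on the event that $T_0$ is large, the two rescaled coordinates are asymptotically equal, so the joint exceedance $\{T_0>r(n)s,\,M_1>r(n)p(n)t\}$ collapses to a single one-dimensional tail evaluated at the more restrictive threshold, which is what the right-hand side $\bar\nu^{\alpha}(s\wedge t)$ records.

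The main obstacle will be the second step of part i): making rigorous the expansion of the implicit root $w(n)$, in particular showing that the terms $p(n)w$, the subleading products involving $s$ and $\Psi$, and the discrepancy between $\zeta(n)$ and $w$ are uniformly negligible, and that the regularly varying $\Psi$ may legitimately be evaluated at the argument $w(n)$ (which is not of the special form $\cdot/n$ appearing in Lemma \ref{convmedidas}) by a regular-variation comparison. This is precisely where the restriction $\alpha\in(1,2)$ and the exact balance between $p(n)$, $r(n)$ and the slowly varying part of the tail are indispensable.
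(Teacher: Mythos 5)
Your part i) takes a genuinely different route from the paper. The paper never touches the implicit equation (\ref{P1iB2}) at this point: it invokes Lemma 3 of \cite{B2} to represent $(T_0,M_1)$ as the first-passage time of the centered clone random walk below $0$ together with the mutants accumulated up to that time, proves a bivariate functional limit theorem for that walk (Proposition \ref{convfunsta}, via convergence of semimartingale characteristics), and then transfers the convergence to first-passage times and overshoots using Theorem 13.6.5 of \cite{Whitt}, identifying the limit as $\tau^\alpha_x$ through Theorem 1 of Chapter VII of \cite{BertoinLevy}. Your Lagrange/implicit-root analysis is the classical analytic alternative and can in principle be carried out; the remark that the two dual variables enter only through the combination $\mu+\lambda$ is exactly the right explanation for the diagonal limit, and you have correctly located the hard step (evaluating the regularly varying correction $\Psi$ at the implicit root, which needs a uniformity argument in the regular-variation limit rather than Lemma \ref{convmedidas} verbatim). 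One bookkeeping caveat: the conclusion $a(n)w(n)\to x(\mu+\lambda)^{1/\alpha}$ from $w(n)\sim (\mu+\lambda)^{1/\alpha}/n$ uses $a(n)\sim nx$, i.e.\ hypothesis (\ref{hip}), not (\ref{hipnew}); under (\ref{hipnew}) alone $a(n)\sim xr(n)p(n)$ is of a different order when $\alpha<2$ because $r(n)$ is regularly varying with index $\alpha$, so you must state explicitly which asymptotic for $a(n)$ enters $e^{-a(n)w(n)}$. (The lemma lists both hypotheses, and the paper's own proof is equally sensitive to this point.)

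Part ii) contains a genuine gap. The paper does not compute the tail of $T_0$ directly: it writes $1-\mathbb{E}_1^{p(n)}\bigl(\exp(-sT_0/r(n)-tM_1/(r(n)p(n)))\bigr)$ as $st\int\!\!\int e^{-su-tv}\bar{\mu}_n(\cdot,\cdot)\,du\,dv$ with $\bar{\mu}_n(x,y)=\mathbb{P}_1^{p(n)}(T_0>x \text{ or } M_1>y)$, raises it to the power $a(n)$ by the branching property, identifies the limit of the exponent using part i), and recovers $\bar{\nu}^\alpha(s\wedge t)$ by uniqueness of the bivariate Laplace transform. Your proposed route --- obtaining $\mathbb{P}_1^{p(n)}(T_0>r(n)s)$ from a single-big-jump principle and Proposition \ref{colaaproxcmn} --- would fail. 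Proposition \ref{colaaproxcmn} controls the tail of the offspring variable $\xi^{(cn)}$ of a single individual, which is regularly varying with index $-\alpha$; the tail of the total progeny $T_0$ of a (near-)critical tree is regularly varying with index $-1/\alpha$, a much heavier tail produced by the recursive branching structure (equivalently, by the ladder structure of the first-passage representation), not by one exceptional litter. A one-big-jump transfer from $\bar{\pi}^{cn}$ therefore yields the wrong exponent. Moreover, collapsing the joint exceedance to the one-dimensional tail at $s\wedge t$ rests on the unproven conditional law of large numbers $M_1=p(n)T_0(1+o(1))$ given that $T_0$ is large. Both difficulties are precisely what the paper's Laplace-transform and uniqueness argument is designed to avoid; if you wish to keep a direct tail computation you would at minimum need the known asymptotics for the total progeny of stable Galton--Watson trees, which are not available from the results of this paper.
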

Observe that the convergence in (\ref{L4est-i}) establishes Poposition \ref{Pro2estable} in the case $k=0$. Because a discrete time CSBP can be constructed from subordinators, that is, given a subordinator $\{\tau_t^\alpha:t\geq 0 \}$, the sequence $\{ Z_k: k\in\mathbb{Z}_+ \}$ defined by iteration as follows:
$$
\sum_{i=1} ^{k+1} Z_i = \tau^\alpha_{x+Z_1+\cdots+Z_k }, \quad k\in\mathbb{Z}_+,
$$ 
with the convention of $Z_0=x$ for some fix $x>0$, is a discrete time CSBP, we refer to \cite{Amaurynotes} for further details. 

The tree-indexed CSBP $\mathcal{Z}$ can also be constructed from this subordinator, namely according to \cite{B2}, $\mathcal{Z}_\varnothing=x$ and $\{ \mathcal{Z}_j : j\in\mathbb{N}  \}$ is given by the sequence of the jumps of $\tau^\alpha$ on $(0,x]$ ranked in decreasing order. The sizes of the subfamilies beget by the subfamily of size $\mathcal{Z}_{1}$ , denoted by $\{ \mathcal{Z}_{1j} : j\in\mathbb{N}  \}$, correspond to the ranked sequences of the jumps of $\tau^\alpha$ on the interval $(x,x+\tau^\alpha_{\mathcal{Z}_1}]$ ranked in decreasing order, $\{ \mathcal{Z}_{2j} : j\in\mathbb{N}  \}$ to those on the interval $(x+\tau^\alpha_{\mathcal{Z}_1},x+\tau^\alpha_{\mathcal{Z}_1+\mathcal{Z}_2}]$. Hence, the convergence (\ref{L4est-i}) will establish the convergence in Theorem \ref{Tmainest} with $u=\varnothing$, once the subordinator $\tau_x^\alpha$, is written in terms of their jumps, or by L\'evy-It\^o decomposition, in terms of the atoms of the Poisson point process associated with it. This is the aim of Lemma below.  




\begin{lemma}\label{nullarrayconex}
Let $b(n)$ be a sequence of integers such that $b(n)\thicksim br(n)p(n)$ for some $b>0$.
\begin{enumerate}
\item [a)]
For every $n\in\mathbb{N}$, let $\{ \chi_j^{(n)} :1\leq j \leq b(n)\}$ be a sequence of independent identically distributed random variables with distribution $ \left( \frac{T_0}{ r(n)}, \frac{ M_{1} }{ r(n)p(n } \right) $. Defining for every $n\in\mathbb{N}$, $\gamma_j^{(n)} := \delta_{\chi_j^{(n)}}$ and $\gamma_n= \sum_{j=1}^{\infty}\gamma_j^{(n)}$, the following convergence holds
\begin{equation} \label{kallth}
 \gamma_n  \Longrightarrow \gamma,
\end{equation}
where $\gamma$ is a Poisson point process with intesity $b\eta$, with $\eta$ the image of the measure $\nu^\alpha$ (given in Theorem \ref{Tmainest}) by the action of the map $x\mapsto (x,x)$. 
\item[b)]
We have the following convergence, under the measure $\mathbb{P}_1^{p(n)}$
\begin{equation}\label{ler} 
\left( \frac{ T_0 }{ r(n) } ,\frac{ M_1 }{ r(n)p(n) } \right)^{ (b(n)^\downarrow) } 
\Longrightarrow 
\left( \mathbf{a}_1, \mathbf{a}_2,...  \right),
\end{equation}
where for all $k\in\mathbb{N}$, $\mathbf{a}_k=(a_k,a_k)$ with $\{a_k\}_{k\in\mathbb{N}}$, the atoms of a Poisson random measure on $(0,\infty)$ with intensity $b\nu^{\alpha}$ ranked in decreasing order; the measure $\nu^{\alpha}$ is given in Theorem \ref{Tmainest}. \end{enumerate}
\end{lemma}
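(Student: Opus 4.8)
The plan is to read part (a) as a statement about the convergence of a null array of i.i.d.\ point masses to a Poisson point process, and then to obtain part (b) from part (a) by applying the continuous mapping theorem to the functional that ranks the atoms. For (a) I work on the punctured space $E := [0,\infty)^2 \setminus \{(0,0)\}$, with the vague topology on Radon point measures on $E$, so that ``relatively compact'' means ``bounded away from the origin.'' The governing fact is the classical criterion for Poisson convergence of a null array of i.i.d.\ point masses: if $\gamma_n = \sum_{j=1}^{b(n)} \delta_{\chi_j^{(n)}}$ with the $\chi_j^{(n)}$ i.i.d.\ for fixed $n$, then $\gamma_n \Longrightarrow$ a Poisson point process of intensity $\mu$ precisely when $b(n)\,\mathbb{P}_1^{p(n)}(\chi_1^{(n)} \in \cdot)$ converges vaguely to $\mu$ on $E$. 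The uniform asymptotic negligibility that the criterion also requires is automatic here, since $b(n) \to \infty$ while $\mathbb{P}_1^{p(n)}(\chi_1^{(n)} \in B) \to 0$ for every $B$ bounded away from the origin.

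The core of part (a) is thus the vague convergence of the intensity. Writing $\chi_1^{(n)} = \left( \frac{T_0}{r(n)}, \frac{M_1}{r(n)p(n)} \right)$ and using $b(n) \thicksim b\,r(n)p(n)$, it suffices to prove that $r(n)p(n)\,\mathbb{P}_1^{p(n)}(\chi_1^{(n)} \in \cdot)$ converges vaguely to $\eta$. Now $\eta$, the image of the absolutely continuous measure $\nu^\alpha$ under $x \mapsto (x,x)$, is carried by the diagonal and is non-atomic; consequently it is determined on $E$ by its values on the up-sets $(s,\infty)\times(t,\infty)$, and vague convergence follows once these are matched. This is exactly what Lemma \ref{L4estable}(ii) supplies: it computes the rescaled tail $r(n)p(n)\,\mathbb{P}_1^{p(n)}\!\left( \frac{T_0}{r(n)} > s,\ \frac{M_1}{r(n)p(n)} > t \right)$ and identifies its limit with $\eta\big((s,\infty)\times(t,\infty)\big)$, the dependence of the limit on a single threshold reflecting the concentration of $\eta$ on the diagonal. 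Hence the intensities converge and (a) follows.

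For part (b), I observe that $\left( \frac{T_0}{r(n)}, \frac{M_1}{r(n)p(n)} \right)^{(b(n)\downarrow)}$ is precisely the decreasing rearrangement of the atoms of $\gamma_n$, while the asserted limit $(\mathbf{a}_1, \mathbf{a}_2, \ldots)$ is the decreasing rearrangement of the atoms of $\gamma$. I therefore apply the continuous mapping theorem to the convergence in (a), using that the ranking map, sending a point measure to the decreasing sequence of its atoms' first coordinates (equipped with the product topology on sequences), is continuous at every configuration that has infinitely many atoms, only finitely many above each level $\varepsilon > 0$, and no two atoms with a common first coordinate. The limit $\gamma$ enjoys these three properties almost surely: $\bar{\nu}^\alpha(\varepsilon) < \infty$ gives local finiteness away from the origin, $\bar{\nu}^\alpha(0^+) = \infty$ gives infinitely many atoms accumulating only at the origin, and non-atomicity of $\nu^\alpha$ excludes ties. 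Since the atoms sit on the diagonal, ranking by first coordinate ranks the pairs and each limiting atom has the form $\mathbf{a}_k = (a_k, a_k)$, with $(a_k)$ the decreasingly ordered atoms of a Poisson random measure of intensity $b\nu^\alpha$, as claimed.

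I expect the main obstacle to be the careful handling of the accumulation of mass at the origin, which is present in both parts. In (a) it is what forces the punctured, vague setting rather than ordinary weak convergence on $[0,\infty)^2$, and it requires checking that convergence of the up-set probabilities is genuinely convergence-determining for the vague topology (for which the non-atomicity and diagonal support of $\eta$ are the relevant facts). In (b) the same accumulation makes the ranked sequence genuinely infinite, so the continuity of the ranking functional must be justified as a limit over truncation levels $\varepsilon \downarrow 0$: one argues that, for each fixed $k$, the top $k$ order statistics are eventually determined by the finitely many atoms above a small threshold, the atoms near the origin not interfering --- a routine but slightly delicate tightness step. The remaining ingredients, namely the null-array criterion and the explicit tail $\bar{\nu}^\alpha$, are then either quoted or immediate.
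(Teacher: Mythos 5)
Your proposal is correct in substance, and your part (a) is essentially the paper's argument: the paper also checks that $\{\gamma_j^{(n)}\}$ is a null array and invokes Kallenberg's Theorem 16.18, whose second condition is trivially satisfied because each $\gamma_j^{(n)}$ is a single Dirac mass, and whose first condition is verified on the $\pi$-system of sets $B=\bigl((s,\infty)\times\mathbb{R}_+\bigr)\cup\bigl(\mathbb{R}_+\times(t,\infty)\bigr)$ via Lemma \ref{L4estable}(ii) and the identity $b\,\bar{\nu}^\alpha(s\wedge t)=\eta(B)$. One small point worth fixing in your write-up: you match the limit $\bar{\nu}^\alpha(s\wedge t)$ of Lemma \ref{L4estable}(ii) with $\eta\bigl((s,\infty)\times(t,\infty)\bigr)$, but for a diagonal measure that "and"-type up-set has mass $\bar{\nu}^\alpha(s\vee t)$; the quantity $\bar{\nu}^\alpha(s\wedge t)$ is the mass of the "or"-type set $B$ above, and indeed the proof of Lemma \ref{L4estable}(ii) is carried out for $\bar{\mu}_n(x,y)=\mathbb{P}_1^{p(n)}(T_0>x \text{ or } M_1>y)$ despite the comma in its statement. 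Either convergence-determining class works, but the thresholds must be paired consistently ($s\wedge t$ with the union, $s\vee t$ with the intersection).

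For part (b) you take a genuinely different route. The paper does not use the continuous mapping theorem: it defines the order statistics $\chi^{(n)}_{j_1}\geq\chi^{(n)}_{j_2}\geq\cdots$ explicitly and rewrites their joint survival probabilities as counting events,
\begin{equation*}
\mathbb{P}\bigl(\chi^{(n)}_{j_1}\geq\mathbf{c}_1,\ldots,\chi^{(n)}_{j_k}\geq\mathbf{c}_k\bigr)
=\mathbb{P}\bigl(\gamma_n(C_1)\geq 1,\ldots,\gamma_n(C_k)\geq k\bigr),
\end{equation*}
for nested sets $C_i$ bounded away from the origin, and then passes to the limit using the convergence of $\gamma_n$ established in (a). This sidesteps any discussion of the continuity of the ranking functional, at the cost of only treating finite-dimensional marginals directly (which is all the finite-dimensional convergence in \eqref{ler} requires). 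Your continuous-mapping argument is cleaner conceptually but does require the justification you correctly anticipate: a.s.\ local finiteness away from the origin, infinitely many atoms, and absence of ties for $\gamma$, so that the decreasing-rearrangement map is a.s.\ continuous. Both routes are valid; the paper's is the more elementary and self-contained of the two, while yours generalizes more readily.
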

\begin{proof}
From the construction $\{ \gamma_j^{(n)} :1\leq j \leq b(n)\}$ is a sequence of independent random variables. Besides, the convergence (\ref{lema4iiest}) in Lemma \ref{L4estable} implies $ \gamma_j^{(n)}\overset{P}{ \rightarrow } 0$ as $n\rightarrow \infty$, uniformly in $j$. Then 
\begin{equation*} 
\sup_j \mathbb{E}( | \gamma^{(n)}_j \wedge 1  | ) \rightarrow 0.
\end{equation*}
Hence, according with the definition given in chapter 4 of \cite{Kallenberg}, we have that $\{\gamma_j^{(n)} : 1\leq j \leq b(n) \}$ is a null array. Thus we will get the convergence (\ref{kallth}) as an application of Theorem 16.18 of \cite{Kallenberg}, once we verify the following condition:
\begin{enumerate}
\item[i)] $\sum_j \mathbb{P}(\gamma_j^{(n)}( B )> 0) \rightarrow \eta(B)$, as $n\rightarrow \infty$, for all $B\in \hat{\mathcal{B}}$, where $\eta$ is the image of the measure $b\nu^\alpha$ by the application $x\mapsto (x,x),$
\item[ii)] $\sum_j  \mathbb{P}(\gamma_j^{(n)}( B )> 1) \rightarrow 0$, as $n\rightarrow \infty$, for all $B\in \mathcal{B}$.
\end{enumerate}
where $\mathcal{B}$ is the Borel $\sigma$-algebra of $[0,\infty)^{2},$ $\hat{\mathcal{B}} = : \{ B\in\mathcal{B}, \gamma(\partial B)=0 \text{ c.s.} \}$, with $\gamma$ the measure defined in the statement of this Lemma and the symbol $\partial$ denotes the boundary of $B$.
Observe that the class of sets $B = ((b,\infty)\times \mathbb{R}_+)\cup(\mathbb{R}_+\times(b',\infty))$ is a $\pi$-system which generates a $\lambda$-system and it is just $\mathcal{B}$. Then by Dynkin's Theorem, is enough to establish the conditions above for sets of the form $B$. In this setting, the condition (ii) holds because $\gamma_j^{(n)}(\cdot)$ takes only the values $0$ or $1,$ for any $j$ and $n,$ the condition (ii) is true. To establish (i), observe the following equalities  
\begin{align*}
\sum^{b(n)}_{j=1} \mathbb{P}( \gamma^{ (n) }_j (B) > 0 ) 
& = \sum^{ b(n) }_{j=1} \mathbb{P}( \gamma^{ (n) }_j (B) = 1 ) \\
& = \sum^{ b(n) }_{j=1}  \bar{\mu}_n((r(n))^{-1}s,(r(n)p(n))^{-1}t) \\ 
& = b(n) \bar{\mu}_n((r(n))^{-1}s,(r(n)p(n))^{-1}t), 
\end{align*} 
here we recall the identity $\bar{\mu}_n(x,y)=\mathbb{P}_1^{p(n)}(T_0>x \text{ or } M_1>y).$ Assuming that $b(n)\thicksim br(n)p(n)$ for some $b>0$, from Lemma \ref{L4estable} and the last equality we have  
$$
\sum^{b(n)}_{j=1} \mathbb{P}( \gamma^{ (n) }_j (B) > 0 ) 
 \xrightarrow [n\rightarrow \infty]{} b \bar{\nu}^\alpha ( s\wedge t)). 
$$
To get the first convergence in the Lemma, it remains to observe that the equality $b\bar{\nu}^\alpha ( s\wedge t)= \eta(B)$ holds. But this follows from the following equalities, 
$$
\int_B \eta(dx,dy) = b\int_{(x,x)\in B} \nu^\alpha(dx) =b\int_{ ( s\wedge t, \infty)  } \nu^\alpha(dx).
$$
We will now prove the convergence (b). For $i=1,2$, $\chi_{ij}^{(n)}$ denotes the $i$-th coordinate of the sequence $\chi_j^{(n)}$ that appear in the statement (a). Assuming that $\chi_i ^{(n)}:= (\chi_{1i}^{(n)},\chi_{2i}^{(n)}) \leq \chi_j^{(n)}:=(\chi_{1j}^{(n)},\chi_{2j}^{(n)})$ if and only if $\chi_{1i}^{(n)} \leq \chi_{1j}^{(n)}$ or $\chi_{2j}^{(n)} \leq \chi_{2j}^{(n)}$, let us define
$j_1$ as the index where the maximum integer of the sequence $\{ \chi^{(n)}_i : 1\leq i \leq b(n) \}$ is reached.
$$
\chi_{ j_1}^{(n)}=\max_{ 1\leq i \leq b(n)} \chi_i .
$$
Similarly for $k=2,...,b(n)$, let $j_k$ be the index of the $k$-order statistic 
$$ 
\chi^{(n)}_{ j_k}=\max_{ i\in J_k} \chi_i , 
$$ 
where $J_k= \{ 1,..., b(n) \} \setminus \{ j_1,...,j_{k-1} \}$. Then observe that
$$
\mathbb{P}(  \chi_{j_1}^{(n)}  \geq \mathbf{c}_1,  \chi_{j_2}^{(n)} \geq \mathbf{c}_2,...,  \chi_{j_k}^{(n)} \geq \mathbf{c}_{k}  )
=  \mathbb{P}( \gamma_n(C_1)\geq 1, \gamma_n(C_2)\geq 2 , ... , \gamma_n(C_{k}) \geq k ),
$$
if $\mathbf{c}_i=(c_i,c_i)$, $C_i= (0,1)\times(0,1) \setminus (0,c_i) \times (0,c_i) $ and $c_1> \cdots > c_{k} $. Taking now the limit when $n\rightarrow \infty $ in the equality below and using the convergence in (\ref{lema4iiest}), we have  
$$
\mathbb{P}(  \chi_{j_1} \geq \mathbf{c}_1,  \chi_{j_2} \geq \mathbf{c}_2,...,  \chi_{j_k} \geq \mathbf{c}_{k}  )
 \xrightarrow [n\rightarrow \infty]{}  \mathbb{P}( \gamma(C_1)\geq 1,  \gamma(C_2)\geq 2,  ... , \gamma(C_{k}) \geq k ).
$$
This implies the desired convergence because 
$$\mathbb{P}( \gamma(C_1)\geq 1, ... , \gamma(C_{k}) \geq k )= \mathbb{P}(   \mathbf{a}_{j_1}  \geq \mathbf{c}_1,  \mathbf{a}_{j_2} \geq \mathbf{c}_2,..., \mathbf{a}_{j_k} \geq \mathbf{c}_{k}  ),
$$
where $\mathbf{a}_k=(a_k,a_k)$ with $\{a_k\}_{k\in\mathbb{N}}$, the atoms of a Poisson random measure on $(0,\infty)$ with intensity $b\nu^{\alpha}$ ranked in decreasing order; the measure $\nu^{\alpha}$ is given in Theorem \ref{Tmainest}. As before we used the indices $j_k$ to rank in decreasing order the sequence $\mathbf{a}_k$.
\end{proof}

\subsubsection{Proof of Lemma \ref{L4estable}}
For each $n\in\mathbb{N}$, let $\mathbf{S}_k^{(n)}$ be a random vector such that their coordinates are obtained from $\xi^{(cn)}$ and $\xi^{(mn)}$, as follows
\begin{equation*}
\mathbf{S}_k^{(n)} = (a(n),0) +  \sum_{i=1}^{k} \left( \xi_i^{(cn)}-1,  \xi_i^{(mn)}   \right), \quad k\in\mathbb{N},
\end{equation*}
where
$\{ (\xi_i^{(cn)}, \xi_i^{(mn)} ): i\in\mathbb{N}\}$ are a pair of i.i.d. random variables with distribution $ (\xi^{(cn)}, \xi^{(mn)} )$. 
  
A key result in the proof of Lemma  \ref{L4estable} is the following: 
\begin{proposition}\label{convfunsta}
In the regime (\ref{hipcolapit}), (\ref{hip}) and (\ref{hipnew}) the normalized random walk defined by
$$
\bar{\mathbf{S}}_{\lfloor r(n)t \rfloor}^{(n)} = (a(n)/r(n)p(n),0) +  \sum_{i=1}^{\lfloor r(n)t \rfloor} \left( (\xi_i^{(cn)} -1)/n,  \xi_i^{(mn)} / r(n)p(n) \right), \quad t\geq0,
$$
converges weakly
\begin{equation*} \label{limconvfunsta}  
 \left(  \bar{ \mathbf{S} }_{\lfloor r(n)t \rfloor }^{(n)} : t\geq 0 \right) 
 \Longrightarrow 
\left(  \left(x+X_t, t \right)  : t\geq 0 \right),
\end{equation*}
where $\{X_t,t\geq0\}$ is an $\alpha$-stable process with no-negative jumps with and characteristic exponent $c_\alpha |\lambda|^\alpha$.
\end{proposition}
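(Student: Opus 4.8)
The plan is to read $\big(\bar{\mathbf{S}}^{(n)}_{\lfloor r(n)t\rfloor}:t\geq 0\big)$ as a sequence of rescaled random walks with row-wise i.i.d.\ increments and to invoke a general functional limit theorem for the convergence of such (null) arrays to a L\'evy process (in the spirit of \cite{Kallenberg}). Concretely, I would reduce the statement to the convergence of the characteristics of the bivariate increment $\big(\tfrac{\xi^{(cn)}-1}{n},\,\tfrac{\xi^{(mn)}}{r(n)p(n)}\big)$, summed $\lfloor r(n)t\rfloor$ at a time: its drift, its (vanishing) Gaussian part, and its L\'evy measure. The target is a L\'evy process whose first coordinate is a centred spectrally positive $\alpha$-stable process $X$ with exponent $c_\alpha|\lambda|^\alpha$ and whose second coordinate is the pure drift $t$; the starting point converges since $a(n)/(r(n)p(n))\to x$ by (\ref{hipnew}).

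First I would prove convergence of the finite-dimensional distributions through the joint characteristic function. By Corollary \ref{corTLlrlclm}, the characteristic function of $(\xi^{(cn)},\xi^{(mn)})$ at the rescaled frequencies $(\lambda/n,\theta/(r(n)p(n)))$ is asymptotic to $\phi^{+}(w_n)$ with $w_n=(1-p(n))(1-e^{i\lambda/n})+p(n)(1-e^{i\theta/(r(n)p(n))})$. Expanding $1-e^{ix}=-ix+O(x^2)$ and using that criticality ($\mathbb{E}(\xi^{+})=1$) together with the tail hypothesis (\ref{hipcolapit}) forces $1-\phi^{+}(w)\sim w-c'w^{\alpha}\ell(1/w)$ as $w\to 0$ (the linear term from the unit mean, the $\alpha$-term from the regular variation), I would compute the logarithm of the centred-increment characteristic function, multiply by $\lfloor r(n)t\rfloor$, and pass to the limit using the normalisation $r(n)\ell(n)/n^{\alpha}\to c_\alpha$ that is encoded in (\ref{rpro}) and Lemma \ref{convmedidas}. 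Since $n/r(n)\to 0$, the clone contribution $-i\lambda/n$ dominates $w_n$, so its $\alpha$-part yields the stable exponent $t\,c_\alpha|\lambda|^{\alpha}$; the clone linear term cancels against the centring by $-1$ and all remaining drift corrections vanish because $\alpha<2$; and the mutant part survives \emph{only} through its mean, contributing the deterministic drift $it\theta$ (second coordinate $\to t$). Independence of the increments of the walk and of the limit $(x+X_t,t)$ then upgrades this to all finite-dimensional distributions.

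To pass from finite-dimensional to Skorokhod convergence I would establish convergence of the L\'evy measures: multiplied by $r(n)$, the law of the normalised increment converges vaguely on $[0,\infty)^2\setminus\{0\}$ to the $\alpha$-stable measure carried by the first axis $\{(y,0):y>0\}$. Here Proposition \ref{colaaproxcmn} supplies the tails of $\xi^{(cn)}$ and $\xi^{(mn)}$ in terms of that of $\xi^{(+)}$, while the thinning representation (\ref{xicmunif}) controls their joint law. The key point is that a macroscopic clone jump of size $\sim yn$ is accompanied by a mutant jump of size only $\sim yn\,p(n)$, which after division by $r(n)p(n)$ is of order $yn/r(n)\to 0$ (equivalently $r(n)p(n)\to\infty$ with $p(n)\sim cn^{-1}$); hence the second coordinate has no macroscopic jumps and the limiting L\'evy measure degenerates onto the first axis. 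Feeding the drift, the null Gaussian part and this L\'evy measure into the general functional theorem yields the weak convergence and identifies the limit as $((x+X_t,t):t\geq 0)$.

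The main obstacle is exactly this two-scale bivariate structure: the two coordinates are microscopically perfectly correlated, both being functionals of the same $\xi^{(+)}$ through (\ref{mut}), yet they must decouple in the limit, all the mass of the large jumps passing to the first coordinate while the second retains only the averaged unit drift that integrates to $t$. Making this rigorous requires the joint tail asymptotics of the normalised increment, obtained by combining Proposition \ref{colaaproxcmn} with (\ref{xicmunif}), together with careful bookkeeping of the compensation proper to $\alpha\in(1,2)$, so that the mean (linear) parts are removed at the correct scale without cancelling the surviving mutant drift.
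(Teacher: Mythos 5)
Your proposal is correct, and its analytic core coincides with the paper's: both arguments reduce the claim to convergence of the L\'evy triplet of the rescaled bivariate increment, both extract the asymptotics of the joint characteristic function from Corollary \ref{corTLlrlclm} together with the regular--variation estimates of Lemma \ref{convmedidas} (and Proposition \ref{colaaproxcmn} for the tails), and both identify the same mechanism by which the limit degenerates --- the clone term $1-e^{\mathrm{i}\lambda/n}$ dominates inside $\phi^{+}$ because $n/r(n)\to 0$, so the $\alpha$-stable part lives entirely on the first coordinate, while the mutant coordinate keeps only its compensated mean and becomes the deterministic drift $t$; the initial value converges by (\ref{hipnew}). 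Where you differ is in the packaging of the functional step: the paper first Poissonizes, proving in Lemma \ref{solVic} that the compound-Poisson-subordinated walk $\tilde{\mathbf{S}}^n_{N_{r(n)t}}-(r(n)t/n,0)$ is a L\'evy process with explicitly computable characteristics converging to those of $((X_t,t))$, and then transfers this to the deterministically indexed walk by verifying the three conditions of Theorem VII.3.4 of Jacod--Shiryaev (uniform convergence of $B^n$, convergence of $\tilde{C}^n$, and of $\mathbf{g}*\nu^n_t$) through elementary floor-function estimates. You instead treat the rows directly as an i.i.d.\ null array and appeal to a Kallenberg-type functional limit theorem, supplemented by vague convergence of $r(n)$ times the increment law to the stable measure carried by the first axis. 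Both routes are legitimate; yours avoids the Poissonization detour and is somewhat shorter, while the paper's makes each semimartingale characteristic explicit and checkable term by term (which is also what it reuses elsewhere). The one point worth making explicit in your write-up is the justification for plugging the complex argument $w_n$ into the expansion of $\phi^{+}$ near $0$ (analytic continuation to $\mathrm{Re}\,w\geq 0$ and the extension of the regular-variation asymptotics there); the paper relies on the same device, so this is a shared, standard, but nontrivial step rather than a gap.
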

We pospone the proof of this result to the appendix and focus on the proof of Lemma \ref{L4estable}.
\begin{proof}[Proof of Lemma \ref{L4estable} i) ]
From Lemma 3 of \cite{B2}, we know that the first passage time below $0$ for the centered random walk $\mathbf{S}_{1,k}^{(n)}= a(n) + \sum_{i=1}^{k} (\xi_k^{(cn)} -1)$ has the same distribution as $T_0$. Moreover, on the one hand, because of the identities,
$$
\varsigma^{(n)}(0) =  \inf \{ k\in\mathbb{Z}_+ : \mathbf{S}_{1,k}^{(n)}=0 \}  \quad \text{and} \quad \Sigma^{(n)}(0)= \sum_{i=1}^{\varsigma^{(n)}(0)} \xi_i^{(mn)},
$$
the random variables 
\begin{equation*}
(\varsigma^{(n)}(0), \Sigma^{(n)}(0))   \quad \text{and} \quad (T_0,M_1),
\end{equation*}
have the same distribution under $\mathbb{P}_{a(n)}^{p(n)}$. On the other hand, we also have the following two identities 
$$
\frac{\varsigma^{(n)}(0)}{r(n)}=\frac{1}{r(n)}\inf\{ k\in\mathbb{Z}_+: \mathbf{S}_{1,k}^{(n)}=0 \} = \inf\{ t\geq 0: \bar{ \mathbf{S} }_{1,\lfloor r(n)t \rfloor }^{(n)}=0 \}.
$$ 
and 
\begin{equation} \label{limca} 
\left ( \frac{1}{r(n)} \, \varsigma^{(n)}(0), \bar{\mathbf{S}}^{(n)}_{ \varsigma^{(n)}(0) } \right) =
\left ( \frac{1}{r(n)} \, \varsigma^{(n)}(0), \left( \bar{\mathbf{S}}_{1, \lfloor \varsigma^{(n)}(0) \rfloor}^{(n)}   , \frac{1}{r(n)p(n)} \, \Sigma^{(n)}(0) \right) \right),
\end{equation}
and the weak convergence
\begin{equation*} \label{limconvfunsta}
\left(  \bar{ \mathbf{S} }_{\lfloor r(n)t \rfloor }^{(n)} : t\geq 0 \right)  
\Longrightarrow 
\left(  \left(x+X_t, t \right)  : t\geq 0 \right).
\end{equation*}
Since $X$ is a spectrally positive L\'evy process, Theorem 1 in Chapter VII of \cite{BertoinLevy} ensures that the first passage time below $-x$ for the process $X$  
\begin{equation*}
\tau_x^\alpha= \inf\{ t\geq 0:  X_t \leq -x \}, \quad x\geq 0.
\end{equation*}
is a stable subordinator of parameter $1/\alpha$. 
We conclude from these facts that the claimed convergence holds as soon as
\begin{equation} \label{convnota1corrFeb}
\left ( \frac{1}{r(n)} \, \varsigma^{(n)}(0), \bar{\mathbf{S}}^{(n)}_{ \varsigma^{(n)}(0) } \right)
 \Longrightarrow 
\left( \tau_x^\alpha, (x+ X_t, t )| _{t=\tau^\alpha_x}  \right).
\end{equation} 
But according to Theorem 13.6.5 of \cite{Whitt} about weak convergence of first passage times and under shoots and overshoots, when there is convergence in Skorohod's topology, we get that 
$$
\left ( \frac{1}{r(n)} \, \varsigma^{(n)}(0) , \bar{\mathbf{S}}_{1, \varsigma^{(n)}(0) }^{(n)}  \right) 
 \Longrightarrow 
 \left( \tau_x^\alpha, X_{\tau_x^\alpha}+ x\right)
$$
Moreover since we have the joint convergence 
$$
\left( \bar{\mathbf{S}}_{1, \lfloor \varsigma^{(n)}(0)t \rfloor}^{(n)},\bar{ \mathbf{S} }_{2, \lfloor \varsigma^{(n)}(0) t \rfloor}^{(n)}: t\geq 0 \right  )  \Longrightarrow  \left( ( x+ X_t, t): t\geq 0 \right)
$$
in the sense of Skorohod's topology, and the second coordinate is a determinist linear function, its is an elementary exercise to extend the above mentioned result of \cite{Whitt} to get that the convergence in (\ref{convnota1corrFeb}) holds. 
\end{proof}

\begin{proof}[Proof of Lemma \ref{L4estable} ii) ]
We will apply the same techniques used in the proof of statement (ii) in Lemma 4 of \cite{B2}. Let us start observing that for every $x,y\in\mathbb{R}$ 
$$ 
e^{-sx-ty} = st \int_0^\infty \int_0^\infty  e^{-sx-ty} \mathbf{1}_{\{ x<u, y <v\}} dudv, \quad s,t\geq 0. 
$$
Thus Fubini's Theorem implies that for any random vector $(X,Y)$ the following identity holds.
$$
1-\mathbb{E}(e^{-sX-tY}) =  st \int_0^\infty \int_0^\infty  e^{-sx-ty} \mathbb{P}(X\geq u \text{ or }Y\geq v) dudv, \quad s,t\geq 0.
$$
In particular,
$$
1-\mathbb{E}_1^{p(n)}(e^{-\frac{s}{r(n)}T_0-\frac{t}{r(n)p(n)}M_1}) = st \int_0^\infty  \int_0^\infty e^{-su-tv} \bar{\mu}_n((r(n))^{-1}u,(r(n)p(n))^{-1}v)dudv, \quad s,t\geq 0,
$$
where $\bar{\mu}_n(x,y):=\mathbb{P}_1^{p(n)}(T_0>x \text{ or } M_1>y)$.
Hence by the branching property, 
\begin{equation} \label{ecprelim}
\mathbb{E}_{a(n)}^{p(n)}(e^{-\frac{s}{r(n)}T_0-\frac{t}{r(n)p(n)}M_1})  = \left(1-st \int_0^\infty  \int_0^\infty e^{-su-tv}  \bar{\mu}_n((r(n))^{-1}u,(r(n)p(n))^{-1}v) dudv \right)^{a(n)}.
\end{equation}
Now, according to (i) the left term in the previous display converges as $n\rightarrow\infty$ towards 
$$
\mathbb{E}\left( e^{ - (s+t)\tau_x^{\alpha} }  \right).  
$$
Besides the right term converges as $n\rightarrow\infty$ towards 
$$
 \exp \left( -x\int_0^\infty( 1-e^{- (s+t)y } )\nu^{\alpha}(dy) \right),
$$
here we use (i) in the Lemma \ref{L4estable}, together with (\ref{hip}). Therefore
\begin{equation} \label{limtL01anest}
\mathbb{E}\left( e^{ - (s+t)\tau_x^{\alpha} }  \right) 
= \exp \left( -x\int_0^\infty( 1-e^{- (s+t)y } )\nu^{\alpha}(dy) \right),
\end{equation} 
Taking logarithms in (\ref{ecprelim}) and (\ref{limtL01anest}) we obtain
$$
\lim_{n\rightarrow\infty} sta(n)\int_0^\infty\int_0^\infty e^{-su-tv}\bar{\mu}_n((r(n))^{-1}u,(r(n)p(n))^{-1}v)dudv  
= x \int_0^\infty (1-e^{-(s+t)y})\nu^{\alpha}(dy).
$$
Hence it only remains to see that the line above is equal to 
$$
xst\int_0^\infty\int_0^\infty e^{-sy-tz}\bar{\nu}^\alpha(y\wedge z)dydz.
$$ 
For that end we observe the equality
$$
\int_0^\infty\int_0^\infty e^{-sy-tz}\bar{\nu}^\alpha(y\wedge z)dydz
= \int_0^\infty \nu^{\alpha}(du) \int_0^\infty \int_0^\infty e^{-sy-tz} \mathbf{1}_{ \{ u>y \text{ or }  u>z\} }dydz, 
$$
and we obtain the claimed identity by uniqueness of the Laplace transform. 
\end{proof}

\subsubsection{Proof of Proposition \ref{Pro2estable}}
Observe it is enough to show the convergence of the Laplace transforms associated with the finite dimensional distributions of each processes. So, we will briefly deduce the Laplace transform of a CSBP (in discrete time) $\{Z_k:k\in\mathbb{Z}_+\}$. From \cite{Amaurynotes}, we know that the transition probabilities of this branching process are characterized, for every $k\in\mathbb{Z}_+$ and $\lambda,z\geq0$ by
\begin{equation} \label{chftcsbpdisc}
\mathbb{E}( e^{ - \lambda Z_{k+1} } | Z_{k} =z  ) = e^{-z\kappa(\lambda)}.
\end{equation}
where $\kappa$ is the cumulant of a subordinator without drift, that is
\begin{equation} \label{kappa}
\kappa(\lambda)= \int_{(0,\infty)} (1 - e^{-\lambda x})\vartheta(dx),
\end{equation}
where $\vartheta$ is the L\'evy measure. Applying successively the property (\ref{chftcsbpdisc}), we obtain 
$$
\mathbb{E}_x( e^{-s_1Z_1 - \cdots - s_k Z_k} ) =  e^{ -x \kappa ( l_{k-1}( s_1)  )  } \qquad s_i\geq 0, \,i=1,2,...,k.
$$
where by definition $l_0(s)=s$ and
\begin{equation} \label{ele}
l_{i}(s_{n-i})= s_{n-i}+\kappa(l_{i-1}(s_{n-i+1}))), \quad i\in\mathbb{Z}_+.  
\end{equation}
In particular, $\{ Z_k^{1/\alpha} : k\in\mathbb{Z}_+  \}$ is a CSBP whose transition probabilities are characterized by a subordinator $\tau_x^\alpha$, with cumulant given by (\ref{kappa}) with $\vartheta= \nu^\alpha$ defined in Theorem \ref{Tmainest}. Therefore we will prove by induction on $k$ the following convergence: 
\begin{equation} \label{limfindimks}
\mathbb{E}_{a(n)}^{p(n)}\left( \prod_{i=1}^k e^{ -\frac{s_{i-1}}{r(n)} T_{i-1}-\frac{t_i}{r(n)p(n)} M_i } \right)
\xrightarrow[n\rightarrow \infty] {}
\exp\{-x\kappa(l_{k-1}(s_0+t_1))\} , \quad \text{for all } s_i,t_i\geq 0, i=1,2,...,k.
\end{equation}
where $l$ is defined in (\ref{ele}), taking $\vartheta= \nu^\alpha$. We start observing that $Z_1^{1/\alpha}$ has the same distribution of $\tau_x^\alpha$ because of CSBP's construction. Thus the case $n=1$ follows from Lemma \ref{L4estable} (i). We next assume (\ref{limfindimks}) holds for $n=k$ and prove the convergence for $n=k+1$.  To this end, we use the Markov property of $\{ (T_n,M_{n+1}): n\in\mathbb{Z} \}$ and the fact that, conditionally to $M_n=j$, the pair $(T_n,M_{n+1})$ has the same distribution of $(T_0,M_1)$ under $\mathbb{P}_j$, to obtain 
\begin{align*}
\mathbb{E}_{a(n)}^{p(n)}&\left( \prod_{i=1}^{k+1} e^{ -\frac{s_{i-1}}{r(n)} T_{i-1}-\frac{t_i}{r(n)p(n)} M_i } \right) \\
& =  \mathbb{E}_{a(n)}^{p(n)} \left( e^{ -\frac{s_0}{r(n)} T_0 -\frac{t_1}{r(n)p(n)} M_1} \cdots  
    e^{-\frac{s_{k-1}}{r(n)}T_{k-1} -\left( \frac{t_k}{r(n)p(n)} - \frac{1}{r(n)p(n)} \log \mathbb{E}_{ r(n)p(n) }^{p(n)} \left( e^{ -\frac{s_k}{r(n)} T_0 -\frac{t_{k+1}}{r(n)p(n)} M_1 }\right)                           \right)M_k } \right).
\end{align*}   
Due to the assumption $r(n)p(n)\thicksim a(n) x$ in hypothesis (\ref{hipnew}), we obtain as consequence of Lemma \ref{L4estable} (i), that 
\begin{equation*}
\mathbb{E}_{a(n)}^{p(n)}\left( \prod_{i=1}^{k+1} e^{ -\frac{s_{i-1}}{r(n)} T_{i-1}-\frac{t_i}{r(n)p(n)} M_i } \right) 
 \thicksim  
    \mathbb{E}_{a(n)}^{p(n)} \left( e^{ -\frac{s_0}{r(n)} T_0 -\frac{t_1}{r(n)p(n)} M_1} \cdots  
    e^{-\frac{s_{k-1}}{r(n)}T_{k-1} -\frac{1}{r(n)p(n)}\left( t_k + \kappa(s_k+t_{k+1}) \right)M_k } \right).
\end{equation*}
Then using the induction hypothesis with
\begin{equation} \label{stprima}
s'_{i-1}+t'_i=
\left\{
\begin{array}{rl}
s_{i-1}+t_i & i<k, \\
l(s_{i-1}+t_i) &  i=k, \\
\end{array}
\right. 
\end{equation}
we get
\begin{multline*}
\mathbb{E}_{a(n)}^{p(n)} \left( e^{ -\frac{s_0}{r(n)} T_0 -\frac{t_1}{r(n)p(n)} M_1} \cdots  
      e^{-\frac{s_{k-1}}{r(n)}T_{k-1} -\frac{1}{r(n)p(n)}\left( t_k +  \kappa(s_k+t_{k+1}) \right)M_k } \right) \\
\xrightarrow[n\rightarrow \infty] {}
\exp\{-x\kappa(l_{k-1}(s'_0+t'_1))\}.    
\end{multline*} 
This concludes the proof because of the recursive definition of $l_i$ given in (\ref{ele}) together with the choice of $s'_{i-1}+t'_i$,
$$ 
\kappa(l_{k-2}(s'_1+t'_2)) =  \kappa(l_{k-1}(s_1+t_2)),
$$
as consequence of $l_{k-1}(s'_0+ct'_1) = l_k(s_0+t_1)$.  \begin{flushright}$\blacksquare$\end{flushright}

\subsubsection{Proof of Theorem \ref{Tmainest}}
We will establish
\begin{equation}\label{convfindimgen}
\mathcal{L}\left(  \left(   \left(   \left(  \frac{\mathcal{A}^{(n)}_u}{r(n)}, \frac{d^{(n)}_u}{r(n)p(n)}   \right) :  |u| \leq k  \right) : k\in\mathbb{Z}_+   \right) , \mathbb{P}_{a(n)}^{p(n)}  \right ) \Longrightarrow
\left( \left( \left( \mathcal{Z}_u, \mathcal{Z}_u \right) : |u| \leq k \right)  :  k\in\mathbb{Z}_+ \right). 
\end{equation}
Actually by the monotone class theorem, it is enough to show for non-negative measurable continuous functions $f_1,...,f_k$  
$$
\mathbb{E}_{a(n)}^{p(n)}  \left[  \prod_{i=1}^k  f_i( ( r(n) )^{-1} \mathcal{A}^{(n)}_{u},(r(n)p(n))^{-1}d^{(n)}_{u}  : |u| \leq i ) \right]
\xrightarrow[n\rightarrow \infty] {}
\mathbb{E}^{\mathbb{Q}_x }  \left[  \prod_{i=1}^k  f_i( \mathcal{Z}_{u},\mathcal{Z}_{u} : |u| \leq i ) \right], 
$$
where $\mathbb{Q}_x$ is the law of a tree-indexed CSBP started with an initial population of size $x$ contructed from a subordinator $\{ \tau_t^\alpha: t\geq 0 \}$. Let us prove it by induction on $k$. The case $k=1$ is given in the convergence (\ref{ler}). Assuming the case $k$, we will prove the convergence for $k+1$. Conditioning with respect to $\mathcal{F}_k= \sigma( \mathcal{A}^{(n)}_u,d^{(n)}_u :  | u |  \leq  k )$ and using Lemma \ref{L2B2} we have  
\begin{align*}
\mathbb{E}_{a(n)}^{p(n)}& \left[  \mathbb{E}_{a(n)}^{p(n)}  \left(  \prod_{i=1}^{k+1}  f_i( ( r(n) )^{-1} \mathcal{A}^{(n)}_{u},(r(n)p(n))^{-1}d^{(n)}_{u}  : |u| = i ) \left|   \right.  \mathcal{F}_k \right) \right] \\
& \qquad \qquad = \mathbb{E}_{a(n)}^{p(n)}   \left[ \prod_{i=1}^k   \right. f_i( ( r(n) )^{-1} \mathcal{A}^{(n)}_{u},(r(n)p(n))^{-1}d^{(n)}_{u}  : |u| \leq i ) \\
& \qquad \qquad \qquad \qquad \left. \mathbb{E}_{1}^{p(n)}  \left(  f_{k+1}( ( (r(n))^{-1} T^{(n)}_0,(r(n)p(n))^{-1}M^{(n)}_1 )^{ d_u^{(n)}\downarrow}  : |u| =k )  \right)   \right] 
\end{align*}
Besides, by induction hypothesis $d_u^{(n)}\thicksim r(n)p(n)\mathcal{Z}_u$ with $|u|=k$, therefore when $n\rightarrow \infty$ in the previous equality we obtain
$$
\mathbb{E}^{\mathbb{Q}_x }  \left[  \prod_{i=1}^k  f_i( \mathcal{Z}_{u},\mathcal{Z}_{u} : |u| \leq i ) \mathbb{E}_{1} (f_{k+1}( \left( \mathbf{a}'_1, \mathbf{a}'_2,...  \right)  ) ) \right] ,
$$
where $\mathbf{a}'_k=(a'_k,a'_k)$ are the atoms of a Poisson random measure on $(0,\infty)$ with intensity $b \mathcal{Z}_{u}\nu^{\alpha}$, repeated according to their multiplicity and ranked in the decreasing order. Due to the definition of a CSBP tree this concludes the proof. 
\begin{flushright}$\blacksquare$\end{flushright}

\subsection{The conditioned to non-extinction case}
This section is devoted to prove Theorem \ref{T1}. Following the same strategy of Proposition \ref{Pro2estable}, we shall establish by induction the convergence of Laplace transforms of the finite dimensional distributions associated with the processes involved. In this aim, we firstly deduce the Laplace transform of the finite dimensional distributions of a CSBP with immigration, with mechanism $(\vartheta, \iota)$, $\{ Z^I_n : n\in\mathbb{N} \}$, defined for every $n\in\mathbb{N}$ as follows 
$$
Z^I_{n+1} = \tau_n(Z^I_n) + I_n, 
$$ 
where $\{I_n:n\in\mathbb{N}\}$ is a sequence of nonnegative random variables with common probability measure $\omega$, which determine the distribution of individual immigrants arriving in the population. Let us denote its Laplace transform of $\omega$ by $\iota$, i.e.
\begin{equation}\label{inmlt}
\iota(\lambda)=\int_0^\infty e^{-\lambda x}\omega(dx), \quad \lambda\geq0.
\end{equation}
Let $\{T_n(t):t\geq0\}_{n\geq0}$ be a sequence of independent subordinators (without drift) and also independents of $I_n$, with the same distribution and Laplace transform given in (\ref{kappa}). Thereby
$$
\mathbb{E} ( e^{-\lambda  T_n(Z^I_n) } | Z^I_n ) = e^{ - Z^I_n \kappa(\lambda) }.
$$
This previous equality together with the Markov property imply that
$$
\mathbb{E}_x( e^{-s_1Z^I_1 - \cdots - s_k Z^I_k} ) =  \prod_{i=0}^{k-1} \iota( l_i( s_{k-i} ) )e^{ -x \kappa( l_{k-1} (s_1) ) }, \quad \text{for all } s_i\geq 0, i=1,2,...,k.
$$
Thus the proof of statement (i) of Theorem \ref{T1} require to establish for all $s_i,t_i\geq 0$, $i=1,2,...,k$, the convergence below holds
\begin{equation} \label{limfindimk}
\mathbb{E}_{a_n}^{p(n)\uparrow}\left( \prod_{i=1}^k e^{ -\frac{s_{i-1}}{n^2} T_{i-1}-\frac{t_i}{n} M_i } \right)
\xrightarrow[n\rightarrow \infty] {}
\exp\{-x\kappa(l_{k-1}(s_0+c_\vartheta t_1))\} \prod_{i=0}^{k-1} \iota(l_{k-i}(s_{i-1}+\tilde{c}_\vartheta t_i)),
\end{equation} 
where $\kappa$ and $l$ are respectively defined in (\ref{kappa}) and (\ref{ele}), taking in partiular $\vartheta=\nu$ given in (\ref{fdpnu}). To obtain (ii) of Theorem \ref{T1}, the previous convergence is proved with $\vartheta=\nu^\alpha$ defined in (\ref{nuest}). The following Lemma establishes the above convergence in the case $k=1$. In their proof we use the reference \cite{B2} to justify the statement corresponding to $\vartheta=\nu$ and previous results for the associated to $\vartheta=\nu^\alpha$. 
\begin{lemma} \label{LconvT0M1nextsub}
If (\ref{hip}) holds, then we have the following convergence 
\begin{equation*} 
\mathcal{L}\left( ( b_1(n)T_0,b_2(n)M_{1} ),\mathbb{P}_{a(n)}^{p(n)\uparrow} \right) \Longrightarrow (\tau, c_\vartheta \tau), 
\end{equation*}
where $\tau$ is a random variable with Laplace transform $e^{-\kappa(s)}\iota(s)$, where $\kappa(s)$, $\iota(s)$ are given in (\ref{kappa}) and (\ref{inmlt}), according to
\begin{enumerate}
\item [a)] if the reproduction law has finite variance $\sigma^2$, $\vartheta=c^{-1}\nu$, where $\nu$ is the measure in (\ref{fdpnu}). Moreover $b_1(n)= n^{-2}$, $b_2(n)=n^{-1}$ and $c_\vartheta=c$;
\item [b)] otherwise, under the assumptions (\ref{hipcolapit}) and (\ref{hipnew}),
$b_1(n)= (r(n))^{-1}$, $b_2(n)=(r(n)p(n))^{-1}$,  $\vartheta$ is given by (\ref{nuest}) and $c_\vartheta=1$.
\end{enumerate}
\end{lemma}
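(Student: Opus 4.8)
The plan is to exploit the fact that, since the reproduction law $\xi^{(+)}$ is critical, the mutant process $\{M_n\}$ is itself critical, which collapses the $h$-transform of Theorem \ref{Tpcondextdistfut} into a plain size-biasing by $M_1$. Indeed, writing $m=\mathbb{E}_1(M_1)$, the branching property gives $\mathbb{E}_1(M_1)=\mathbb{E}_1(T_0)\,\mathbb{E}(\xi^{(m)})$ with $\mathbb{E}(\xi^{(m)})=p\,\mathbb{E}(\xi^{(+)})=p$ and $\mathbb{E}_1(T_0)=(1-\mathbb{E}(\xi^{(c)}))^{-1}=p^{-1}$, so that $m=1$ for every value of $p$. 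Consequently the extinction probability is $q=1$, $f'(q)=m=1$, and the martingale of Theorem \ref{Tpcondextdistfut} reduces at time $1$ to $Y_1=M_1$. Thus for any bounded measurable $F$,
\begin{equation*}
\mathbb{E}^{p(n)\uparrow}_{a(n)}\big(F(T_0,M_1)\big)=\frac{1}{a(n)}\,\mathbb{E}^{p(n)}_{a(n)}\big(M_1\,F(T_0,M_1)\big).
\end{equation*}

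Next I would apply this identity to $F(T_0,M_1)=\exp(-s\,b_1(n)T_0-t\,b_2(n)M_1)$ and extract one power of $M_1$ at the correct scale. Writing $M_1=b_2(n)^{-1}(b_2(n)M_1)$ gives
\begin{equation*}
\mathbb{E}^{p(n)\uparrow}_{a(n)}\big(e^{-s b_1(n)T_0-t b_2(n)M_1}\big)=\frac{1}{a(n)b_2(n)}\,\mathbb{E}^{p(n)}_{a(n)}\big((b_2(n)M_1)\,e^{-s b_1(n)T_0-t b_2(n)M_1}\big),
\end{equation*}
and in both regimes one checks directly that $a(n)b_2(n)\to x$ (using $a(n)\sim nx$, $b_2(n)=n^{-1}$ in case (a), and $a(n)\sim x\,r(n)p(n)$, $b_2(n)=(r(n)p(n))^{-1}$ in case (b)). For fixed $t>0$ the map $(u,v)\mapsto v\,e^{-su-tv}$ is bounded and continuous on $[0,\infty)^2$, so I can feed it directly into the \emph{unconditioned} weak convergence $(b_1(n)T_0,b_2(n)M_1)\Rightarrow(\tau_x,c_\vartheta\tau_x)$ --- which is Lemma \ref{L4estable} (i) in case (b) and the finite-variance statement of Lemma 4 of \cite{B2} in case (a). This sidesteps any uniform-integrability issue created by the size-biasing, since the exponential factor makes the test function bounded. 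The outcome is
\begin{equation*}
\mathbb{E}^{p(n)\uparrow}_{a(n)}\big(e^{-s b_1(n)T_0-t b_2(n)M_1}\big)\xrightarrow[n\rightarrow\infty]{}\frac{c_\vartheta}{x}\,\mathbb{E}\big(\tau_x\,e^{-(s+c_\vartheta t)\tau_x}\big).
\end{equation*}

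To finish I would evaluate the right-hand side and identify it with the announced joint Laplace transform. Since $\mathbb{E}(e^{-\lambda\tau_x})=e^{-x\kappa(\lambda)}$ with $\kappa$ as in (\ref{kappa}), differentiation gives $\mathbb{E}(\tau_x e^{-\lambda\tau_x})=x\,\kappa'(\lambda)e^{-x\kappa(\lambda)}$, whence the limit equals $c_\vartheta\,\kappa'(s+c_\vartheta t)\,e^{-x\kappa(s+c_\vartheta t)}$. The key algebraic observation is that $c_\vartheta\kappa'=\iota$: writing $\vartheta=c_\vartheta^{-1}\vartheta_0$ with $\vartheta_0=\nu$ in case (a) and $\vartheta_0=\nu^\alpha$ in case (b), one has $\kappa'(\lambda)=c_\vartheta^{-1}\int_0^\infty y\,e^{-\lambda y}\,\vartheta_0(dy)$, so $c_\vartheta\kappa'(\lambda)=\int_0^\infty e^{-\lambda y}\,y\,\vartheta_0(dy)=\iota(\lambda)$, precisely because the immigration measure of Theorem \ref{T1} is the size-biased reproduction measure $z\,\vartheta_0(dz)$. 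Hence the limit is $\iota(s+c_\vartheta t)\,e^{-x\kappa(s+c_\vartheta t)}$, which equals $\mathbb{E}(e^{-(s+c_\vartheta t)\tau})$ for $\tau$ with Laplace transform $e^{-x\kappa(\cdot)}\iota(\cdot)$; this is exactly the joint Laplace transform of $(\tau,c_\vartheta\tau)$, giving the claim.

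The main obstacle I anticipate is not analytic but conceptual bookkeeping: making sure the reduction to size-biasing is legitimate (that $m=1$, hence $q=1$ and $f'(q)=1$, genuinely hold under the standing criticality assumption, independently of $p$), and then correctly matching the immigration transform $\iota$ to $c_\vartheta\kappa'$ so that $c_\vartheta$ and the scaling factor $(a(n)b_2(n))^{-1}\to x^{-1}$ combine to reproduce the stated transform. A secondary point to handle with care is the value $t=0$: the boundedness trick requires $t>0$, but since Laplace transforms on $\{s\ge 0,\,t>0\}$ form a determining class the limiting law is pinned down, and continuity of the limit at $t=0$ closes the gap.
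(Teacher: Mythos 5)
Your proof is correct and reaches the same limit as the paper, but by a genuinely different route for the key computation. Both arguments start from the same reduction: since the reproduction law is critical one has $m=\mathbb{E}_1(M_1)=1$, hence $q=1$ and $f'(q)=1$, so the $h$-transform of Theorem \ref{Tpcondextdistfut} degenerates into plain size-biasing by $M_1/a$ --- you spell this out via Wald's identity $\mathbb{E}_1(M_1)=\mathbb{E}_1(T_0)\mathbb{E}(\xi^{(m)})=p^{-1}\cdot p$, whereas the paper leaves it implicit in the identity (\ref{tLflecha01}). From there the paths diverge. The paper uses the branching property to peel off a single ancestor, writing the size-biased transform as $\mathbb{E}_{a-1}^{p}(e^{-sT_0-tM_1})\,\mathbb{E}_1^{p}(e^{-sT_0-tM_1}M_1)$, and then computes the limit of the second factor by differentiating the tail-integral representation $1-st\int\int e^{-sx-ty}\bar\mu_n\,dxdy$ and invoking the tail convergence of Lemma \ref{L4estable}(ii) (resp.\ Lemma 4(ii) of \cite{B2}). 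You instead keep all $a(n)$ ancestors together, feed the bounded continuous test function $(u,v)\mapsto ve^{-su-tv}$ (bounded by $(et)^{-1}$ for $t>0$) into the unconditioned weak convergence of Lemma \ref{L4estable}(i) (resp.\ Lemma 4(i) of \cite{B2}), and evaluate the limit via $\mathbb{E}(\tau_x e^{-\lambda\tau_x})=x\kappa'(\lambda)e^{-x\kappa(\lambda)}$ together with the identification $c_\vartheta\kappa'=\iota$. Your route buys a real economy: it needs only part (i) of the convergence lemmas, bypasses the $\bar\mu_n$ double-integral manipulations entirely, and disposes of the integrability issue created by the size-biasing in one line; your handling of the $t=0$ boundary (Laplace transforms on $s\ge0$, $t>0$ are determining) is also adequate. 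What you lose is the explicit factorization (\ref{tLflecha01}) into a ``population'' factor and an ``immigrant'' factor, which is not needed for this lemma but is precisely the identity the paper recycles in the induction step of the proof of Theorem \ref{T1}; if you continued to the full theorem you would have to recover that single-ancestor factor anyway.
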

\begin{proof} 
To simplify the notation we just write $b_1$ and $b_2$. We prove the convergence of Laplace transform $( b_1T_0,b_2 M_{1} )$ under the measure $\mathbb{P}_{a(n)}^{p(n)\uparrow} $. First, recalling the definition of the conditional measure given in (\ref{pflecha}), an elementary calculation using the branching property implies
\begin{equation} \label{tLflecha01}
\mathbb{E}_a^{p\uparrow}\left( e^{ -s T_0-t M_1 } \right)
 = \mathbb{E}_{a-1}^p \left( e^{ -s T_0-t M_1 }   \right) \mathbb{E}_1^p \left( e^{ -s T_0-t M_1 } M_1 \right).
\end{equation}
Thanks to Lemma 4 of \cite{B2} and Lemma \ref{L4estable}.
\begin{equation} \label{limtL01an} 
\mathbb{E}_{a(n)}^{p(n)}\left( \exp \left( - sb_1T_0 - tb_2M_1  \right) \right) 
\xrightarrow[n\rightarrow \infty] {}
\exp \left( -x\int_0^\infty( 1-e^{- (s+c_\vartheta t)y } )c_\vartheta^{-1}\vartheta(dy) \right),
\end{equation}
so it remains to calculate the limit of the second factor in (\ref{tLflecha01}). Using again \cite{B2} together with the equality (\ref{ecprelim}), we have
\begin{equation*} \label{tL01}
\mathbb{E}_1^{p(n)}\left( \exp \left( - sb_1T_0 - tb_2 M_1  \right) \right) 
= 1-st\int_0^\infty \int_0^\infty e^{-sx-ty}\bar{\mu}_n(  b_1^{-1}x, b_2^{-1}y)dxdy, 
\end{equation*}
as before $\bar{\mu}_n(x,y):=\mathbb{P}_1^{p(n)}(T_0>x \text{ or } M_1>y)$. This implies due to Lemma 4 (ii) in \cite{B2} and Lemma \ref{L4estable} (ii) 
\begin{align} \label{qinmigracion}
\mathbb{E}_1^{p(n)}\left( \exp \left( - sb_1T_0 - t b_2 M_1  \right)M_1 \right) 
\xrightarrow[n\rightarrow \infty] {} &
s\int_0^\infty \int_0^\infty e^{-sx-ty}  \bar{\vartheta}\left(x\wedge \frac{y}{c_\vartheta} \right) dxdy \nonumber \\
& \quad \quad \quad \quad -st\int_0^\infty \int_0^\infty ye^{-sx-ty}\bar{\vartheta}\left(x\wedge \frac{y}{c_\vartheta} \right) dxdy.  \nonumber 
\end{align}
Computing the integrals we get
\begin{equation} \label{inmigracion}
\mathbb{E}_1^{p(n)}\left( \exp \left( - sb_1T_0 - tb_2 M_1  \right)M_1 \right) 
\xrightarrow[n\rightarrow \infty] {}  \int_0^\infty  e^{-(s+c_\vartheta t)z} z \vartheta(dz).
\end{equation}
This finishes the proof.
\end{proof}
We can now continue with the proof of Theorem \ref{T1}, assuming that (\ref{limfindimk}) holds for $k$ and to verify the case $k+1$. Let be $\mathcal{F}_k=\sigma((M_{j-1},T_j),j\leq k)$ and $(T'_0,M'_1)$ an independent copy of $(T_0,M_1)$. Recalling the definition of the measure $\mathbb{P}^\uparrow$ we have, 
\begin{align*}
\mathbb{E}_{a}^{p\uparrow}& \left( e^{ -\lambda_0 T_0 -\theta_1 M_1 } \cdots e^{ -\lambda_k T_k -\theta_{k+1} M_{k+1} } \right) \\
& \qquad \qquad\qquad\qquad\qquad\quad= \mathbb{E}_{a}^{p} \left( e^{ -\lambda_0 T_0 -\theta_1 M_1 } \cdots e^{ -\lambda_{k-1} T_{k-1} -\theta_k M_k } \frac{1}{a} 
                   \mathbb{E}_{M_k}^{p} \left( e^{ -\lambda_k T'_0 -\theta_{k+1} M'_1 } M'_1  \right) \right).   
\end{align*}        
Then applying the identity (\ref{tLflecha01}) we get           
\begin{align*} 
\mathbb{E}_{a}^{p\uparrow}& \left( e^{ -\lambda_0 T_0 -\theta_1 M_1 } \cdots e^{ -\lambda_k T_k -\theta_{k+1} M_{k+1} } \right)  \\                 
& =\mathbb{E}_{a}^{p} \left( e^{ -\lambda_0 T_0 -\theta_1 M_1 } \cdots e^{ -\lambda_{k-1} T_{k-1} -\theta_k M_k } \frac{M_k}{a} 
    \left( \mathbb{E}_1^{p} \left( e^{ -\lambda_k T'_0 -\theta_{k+1} M'_1 } \right) \right)^{M_k-1} 
    \mathbb{E}_1^{p} \left( e^{ -\lambda_kT'_0-\theta_{k+1} M'_1 } M'_1 \right) \right) .
\end{align*}
Now using the Markov property of $\{ (T_n,M_{n+1}),n\in\mathbb{Z}_+ \}$ and writing the terms suitably, we get
\begin{align*}
\mathbb{E}_{a}^{p\uparrow}&\left( e^{ -\lambda_0 T_0 -\theta_1 M_1 } \cdots e^{ -\lambda_k T_k -\theta_{k+1} M_{k+1} } \right) \\
& = e^{- \frac{1}{n} \log \mathbb{E}_n^{p} \left( e^{ -\lambda_k T'_0 -\theta_{k+1} M'_1 } \right) } 
    \mathbb{E}_1^{p} \left( e^{ -\lambda_kT'_0-\theta_{k+1} M'_1 } M'_1 \right) \\
& \qquad \qquad \qquad \qquad \qquad \quad 
  \times  \mathbb{E}_a^{p} \left( e^{-\lambda_0 T_0 -\theta_1 M_1 } \cdots e^{ -\lambda_{k-1} T_{k-1}} 
    e^{-\left( \theta_k - b_2 \log \mathbb{E}_{b^{-1}_2}^{p} \left( e^{ -\lambda_k T'_0 -\theta_{k+1} M'_1 } \right) \right)M_k } \frac{M_k}{a} \right). 
\end{align*}
\noindent Taking in the previous equality $(\lambda_{i-1},\theta_{i})=( b_1s_{i-1}, b_2 t_i)$, $i=1,...,k+1$ and $(a,p)=(a(n),p(n))$, we use hypothesis (\ref{hip}) and (\ref{hipnew}) to obtain 
\begin{align*}
\mathbb{E}_{a(n)}^{p(n)\uparrow}&\left( \prod_{i=1}^{k+1} e^{ - b_1 s_{i-1}T_{i-1}- b_2 t_i M_i } \right) \\
& \thicksim  e^{- b_2 \log \mathbb{E}_{b_2^{-1}}^{p(n)} \left( e^{ - b_1s_kT'_0 - b_2 t_{k+1} M'_1 } \right) }  
    \mathbb{E}_1^{p(n)} \left( e^{ - b_1s_kT'_0- b_2 t_{k+1} M'_1 } M'_1 \right) \\ 
&  \qquad \qquad \qquad \qquad \times
    \mathbb{E}_{a(n)}^{p(n)\uparrow} \left( e^{ - b_1 s_0 T_0 - b_2 t_1 M_1} \cdots  
    e^{- b_1s_{k-1}T_{k-1} - b_2\left( t_k +  \kappa(s_k+ct_{k+1}) \right)M_k } \right). 
\end{align*}
Now we have to calculate the limit of each factor. The first one converges towards to $1$ thanks to (\ref{limtL01an}). Besides, to get  
\begin{equation} \label{limfact2}
\mathbb{E}_1^{p(n)} \left( e^{ - b_1 s_k T'_0 - b_2t_{k+1} M'_1 } M'_1 \right)
\xrightarrow[n\rightarrow \infty] {}
\iota(\kappa_0(s_k+c_\vartheta t_{k+1})),
\end{equation}
we use the convergence (\ref{inmigracion}) together the convention $\kappa_0(s)=s$. As in Proposition \ref{Pro2estable}, in order to conclude we use the induction hypothesis with $s'_{i-1}+t'_i$, $1\leq i \leq k$ defined in (\ref{stprima}).

\appendix
\section{Proof of Lemma \ref{convmedidas}} \label{prn}
A consequence of (\ref{hipcolapit}) is that the measure defined on $[0,\infty)$ by
\begin{equation} \label{muasint}
\mu(x):= \int_0^x z\bar{\pi}^{+}(z)dz, \quad x\geq 0. 
\end{equation}
is such that $x\mapsto \mu(x)$ is $RV_\infty^{2-\alpha}$. Then from the Tauberian-Abelian Theorem (see \cite{Teugles}), its Laplace transform  $\mathcal{L}_\mu\in RV_ 0^{-(2-\alpha)}$ and 
\begin{equation*} \label{mlmta}
\mu(x) \thicksim \frac{1}{\Gamma(3-\alpha)} \mathcal{L}_\mu \left( 1/x \right), \quad x\rightarrow\infty.
\end{equation*}
Observe that  
\begin{equation*}
\lambda^2 \mathcal{L}_\mu(\lambda) = \int_0^\infty ( 1-e^{-\lambda y} -\lambda y )\pi^{+}(dy) +  \int_0^\infty \lambda z(1-e^{-\lambda z})\pi^{+}(dz).
\end{equation*}
Then
\begin{equation} \label{intasint}
\lambda^2 \mathcal{L}_\mu(\lambda) = \mathbb{E}\left( 1- e^{-\lambda \xi^+} - \lambda \xi^+ e^{-\lambda \xi^+} \right), \quad \lambda \rightarrow 0.    
\end{equation}
As consequence of the definition of the measure $\mu$ and the approximations above,
\begin{equation} \label{aproxcolatff} %
\bar{\pi}^{+}\left( 1/\lambda \right) \thicksim c_\alpha \mathbb{E}\left( 1- e^{-\lambda \xi^+} - \lambda \xi^+ e^{-\lambda \xi^+}\right), \quad \lambda \rightarrow 0,
\end{equation}
where $c_\alpha=1/\Gamma(3-\alpha)$. Hence for all $x>0$,
\begin{equation} \label{fspsiasint1} 
\frac{1}{\mathbb{E}\left( 1- e^{-\lambda \xi^+} - \lambda \xi^+ e^{-\lambda \xi^+}\right )} \bar{\pi}^{+} \left( x/\lambda \right) \thicksim  \frac{ \bar{\pi}^+ \left( x/\lambda \right) }{ c_\alpha \bar{\pi}^+\left( 1/\lambda \right) }  \xrightarrow[\lambda \rightarrow 0] {} c_\alpha x^{-\alpha}.
\end{equation}
We set $r(n)=\left(\mathbb{E}\left[ 1- e^{- \xi^+/n} - \xi^+ e^{- \xi^+/n}/n\right] \right)^{-1}$ and define the measure on $(0,\infty)$, $m_n(dy) = r(n)\bar{\pi}^{+}(n dy)$. The convergence in (\ref{fspsiasint1}) implies 
$$
m_n(x,\infty) \xrightarrow[n \rightarrow \infty] {}  \int_x^\infty \frac{c_\alpha}{\alpha} \frac{dy}{y^{1+\alpha}}, \quad \text{for all } x\geq 0.
$$
Therefore, for all $0<x\leq y \leq \infty$
$$
m_n(x,y] \xrightarrow[n \rightarrow \infty] {}  \int_x^y \frac{c_\alpha}{\alpha} \frac{dz}{z^{1+\alpha}}. 
$$
This implies that the measure on $(0,\infty)$ defined by $m_n(dy) = r(n)\bar{\pi}^{+}(n dy)$ converges vaguely towards $c_\alpha \frac{dy}{y^{1+\alpha}}$. We also have  
\begin{equation*} \label{finlabel}
\int y^2 \mathbb{1}_{\{ y\leq x \}}r(n) \pi^{+}(ndy) 
 \xrightarrow[n \rightarrow \infty] {} c_\alpha \int  y^2 \mathbb{1}_{\{ y\leq x \}} \frac{dy}{y^{1+\alpha}}.
\end{equation*}
Using an argument of monote class to deduce the above convergence over intervals $I\subset (0,\infty)$. Thus we obtain the convergence of the Laplace transform of the measure $\mu$. This complete the proof because $\mu$ is regularly varying at infinity with indice $2-\alpha$ and its Laplace transform satisfies the identity (\ref{intasint}).

\section{Proof of Proposition \ref{convfunsta} }

Before proving Proposition \ref{convfunsta}, we would like to present some basic aspect of functional convergence of stochastic process, further details can be found in \cite{Shiryaev}. Is well know that the law of a L\'evy process $\{ X_t : t\geq 0 \}$ on $\mathbb{R}^d$ is determined by that of random variable $X_1$, which is infinitely divisible random variable, and according to the L\'evy-Khintchine formula has characteristic exponent  
$$
\Psi( \mathbf{u} ) = i \mathbf{u} \cdot \mathbf{b} - \frac{1}{2} \mathbf{u}\cdot c\mathbf{u}^T + \int \left( e^{i \mathbf{u}\cdot \mathbf{x} } -1 -i \mathbf{u} \cdot \mathbf{h}(\mathbf{x}) \right)F(d\mathbf{x})
$$ 
where $\mathbf{b}\in\mathbb{R}^d$, $c$ is a $d\times d$ symmetric nonnegative matrix, F is a positive measure on $\mathbb{R}^d$ with $F(\{ \mathbf{0} \})=0$ and $\int(1\wedge |\mathbf{x}|^2) F(d\mathbf{x})<\infty$, $\mathbf{h}$ is a truncation function from $\mathbb{R}^d$ to $\mathbb{R}^d$, that is, bounded measurable satisfying 
$$ 
\mathbf{h}(\mathbf{x})= o(|\mathbf{x}|), \quad |\mathbf{x}|\rightarrow 0.
$$
Hence an infinitely divisible distribution, and therefore a L\'evy process, is uniquely characterized by the triple $(\mathbf{b}, c, F)$.
Another useful related equality is a $d\times d$ symmetric nonnegative matrix, called the modified second characteristic, and defined as follows 
$$
\tilde{c}^{ij}= c^{ij} + \int h^i(\mathbf{x}) h^j (\mathbf{x}) F(d\mathbf{x}), \quad i,j=1,2,...,d. 
$$
According to Theorem VII.2.9 of \cite{Shiryaev}, if $\{F_n\}_{n\geq1}$ is a sequence of infinitely divisible distributions on $\mathbb{R}^d$. Then $F_n\rightarrow F$ weakly if and only if
\begin{align*}
\mathbf{b}&_n \rightarrow \mathbf{b}\\
\tilde{c}&_n \rightarrow \tilde{c}\\
F&_n(\mathbf{g})  \rightarrow F(\mathbf{g}) \quad \text{for all } \mathbf{g}\in C_1(\mathbb{R}^d),
\end{align*}
where $C_1(\mathbb{R}^d)$ is a convergence-determing class for the weak convergence induced by all continuous bounded non-negative functions $\mathbb{R}^d\rightarrow \mathbb{R}$, vanishing at the origin and with limit at infinity.  
We stress that here we consider characteristics relatives to a continuous truncation function, $\mathbf{h}$.

In a more general sense, a $d$-dimensional semimartingale $W$, has associated a characteristic triplet $(B,C,\nu)$ consisting in:
\begin{itemize}
\item [-] $B= (B^i)_{i\leq d}$ a predictable process with components of finite variation over each interval $[0,t]$.
\item [-] $C=(C^{ij})_{i,j\leq d}$ a continuous process, namely
$$
C^{ij} = \langle W^{i,c}, W^{j,c}  \rangle,
$$
where $W^c$ is the continuous martingale part of $W$.
\item [-] $\nu$ a predictable random measure on $\mathbb{R}_+\times\mathbb{R}^d$. 
\end{itemize}
A second modified characteristic $\tilde{C}$ is also defined, 
$$
\tilde{C}^{ij}_t = C^{ij}_t + (h^i h^j) * \nu_t - \sum_{s\leq t} \left( \int h^i (\mathbf{x})\nu( \{ s\} \times d\mathbf{x} ) \right) \left( \int h^j (\mathbf{w})\nu( \{ s\} \times d\mathbf{w} )   \right).
$$
If $W$ has no fixed times of discontinuity, in which case $B$ is continuous, and $|h(x)|^2* \nu_t<\infty$, it reduces to
$$
\tilde{C}^{ij}_t = C^{ij}_t + (h^i h^j) * \nu_t. 
$$
According to Theorem VII.3.4 of \cite{Shiryaev}, the necessary and sufficient conditions to assure the functional convergence of a sequence of semimartingales $W^n$ towards $W$ are given also in terms of their characteristics: \begin{align} \label{condconvfun}
&\sup_{s\leq t}  | B_s^n -B_s| \rightarrow 0,  \text{ for all } t\geq 0, \nonumber \\
&\tilde{C}^n  \rightarrow \tilde{C},  \text{ for all } t\in D, \nonumber \\
&\mathbf{g} *\nu^n_t \rightarrow \mathbf{g} *\nu_t , \text{ for all } t\in D, \mathbf{g}\in C_1(\mathbb{R}^d) ,
\end{align}
where $D$ is a dense subset of $\mathbb{R}_+$.

We now turn to prove the convergence claimed in Proposition \ref{convfunsta}. In order to apply the Theorem VII.3.4 of \cite{Shiryaev}, first we will prove the convergence of the characteristics of the process
\begin{equation} \label{keyproc}
\tilde{ \mathbf{S} }^n_{N_{r(n)t}} - (  r(n)t /n  ,0 ), \quad t\geq0,
\end{equation}
where 
$$
\tilde{ \mathbf{S} }^n_k =\sum_{i=1}^{k} \left( \xi_i^{(cn)} /n,  \xi_i^{(mn)} / r(n)p(n) \right) , \quad k\in\mathbb{N},
$$ 
and $\{N_t,t\geq 0\}$ is a Poisson process with parameter one, independent of the sequence $$\xi^{(n)} = \left\{ \left( \xi_k^{(cn)} ,  \xi_k^{(mn)} \right) : k\in \mathbb{Z}_+ \right\}.$$
The following Lemma establishes the previous statement. We will use this result as a device to study the characteristics of $\bar{ \mathbf{S} }^n_{ \lfloor r(n)t \rfloor} $, which are closely related to those of $\tilde{ \mathbf{S} }^n_{N_{r(n)t}} - (  r(n)t /n  ,0 )$.

\begin{lemma}\label{solVic}
The process defined in (\ref{keyproc}) is a semimartingale with characteristics relatives to a continuous truncation function $\mathbf{h}$ given by
\begin{align} \label{chawrp}
&\mathbf{b}^n_t   =  r(n)t \mathbb{E}\left[   \mathbf{h}\left(  b(n) \xi^{(n)}  \right)   \right] - (  r(n)t /n  ,0 ),  \nonumber \\
&c_t^{n,ij}   =0,  \quad \tilde{c}_t^{n,ij}=  r(n)t \mathbb{E}\left[ h_i \left( b(n)\xi^{(n)}  \right)  h_j \left(  b(n)\xi^{(n)}   \right) \right], \quad i,j=1,2, \nonumber \\
&F^n_t  (d\mathbf{x}) =  r(n)t \pi(d\mathbf{x}),
\end{align}
where $b(n)\xi^{(n)} = \left( \xi^{(cn)}/n ,  \xi^{(mn)}/r(n)p(n) \right)$, $ \pi^{(n)}(d\mathbf{x})=\mathbb{P}( \xi^{(cn)}\in dx_1, \xi^{(mn)}\in dx_2 )$.
Moreover, in the regime (\ref{hipcolapit}) and (\ref{hipnew}) we have the following weak convergence in the sense of Skorohod topology  
\begin{equation}\label{convppc}
\left( \left(  \tilde{ \mathbf{S} }^n_{N_{r(n)t}} - (  r(n)t /n  ,0 )  : t\geq0 \right), \mathbb{P}_{a(n)}^{p(n)} \right) \Longrightarrow \left(X_t,t : t\geq0 \right),
\end{equation}
where $X_t$ is a spectrally positive $\alpha$-stable process with parameter $\alpha\in(1,2)$. In particular, we obtain the convergence of the characteristics in (\ref{chawrp}) towards those relatives to $\left((X_t,t): t\geq0\right)$ and characteristic exponent $c_\alpha |\lambda|^\alpha$, that is 
\begin{align} \label{chasta}
& \mathbf{b}_t = \left(t \left( \int_{(0,\infty)} \lambda (h(y)-y)  c_\alpha y^{-(\alpha+1)}dy\right),t \right), \nonumber \\
& c_t^{ij} =0,  \quad  \tilde{c}_t^{ij}= \mathbb{E}\left[ h_i \left( X_t  \right)  h_j \left( X_t  \right) \right]  \quad i,j=1,2, \nonumber \\
& F_t(d\mathbf{x}) = t c_\alpha x_1^{-(\alpha+1)}dx_1\delta_0(dx_2),
\end{align}
where $h$ is a continuous truncation function from $\mathbb{R}$ to $\mathbb{R}$. 
\end{lemma}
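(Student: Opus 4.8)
The plan is to recognise the process in (\ref{keyproc}) as an explicit L\'evy process, read off its characteristics, and then obtain the functional convergence (\ref{convppc}) from the general semimartingale criterion recalled in (\ref{condconvfun}) (Theorem VII.3.4 of \cite{Shiryaev}). For the first assertion I would note that $\tilde{\mathbf{S}}^n_{N_{r(n)t}}$ is a compound Poisson process: the walk $\tilde{\mathbf{S}}^n$ has i.i.d.\ increments distributed as $b(n)\xi^{(n)}=(\xi^{(cn)}/n,\xi^{(mn)}/r(n)p(n))$ and is time-changed by the independent Poisson process $N$ run at rate $r(n)$. Hence it is a L\'evy process, and subtracting the deterministic drift $(r(n)t/n,0)$ leaves a semimartingale. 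Its L\'evy--Khintchine triple is read off directly, giving (\ref{chawrp}): no Gaussian part, so $c^n_t=0$; jump intensity $F^n_t=r(n)t\,\mathrm{Law}(b(n)\xi^{(n)})$; drift $r(n)t\,\mathbb{E}[\mathbf{h}(b(n)\xi^{(n)})]-(r(n)t/n,0)$ relative to the continuous truncation $\mathbf{h}$; and, since $c^n=0$ and there are no fixed discontinuities, $\tilde c^{n,ij}_t=(h^ih^j)*\nu^n_t=r(n)t\,\mathbb{E}[h_i(b(n)\xi^{(n)})h_j(b(n)\xi^{(n)})]$. This part is bookkeeping.

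For the convergence, since every characteristic is linear in $t$ it is enough to verify the three conditions of (\ref{condconvfun}) at each fixed $t$ (the $\sup_{s\le t}$ in the drift condition then follows from linearity). The decisive one is the L\'evy--measure condition: for each $\mathbf{g}\in C_1(\mathbb{R}^2)$ I must show
\[
\mathbf{g}*\nu^n_t = r(n)t\,\mathbb{E}\!\left[\mathbf{g}\!\left(\frac{\xi^{(cn)}}{n},\frac{\xi^{(mn)}}{r(n)p(n)}\right)\right]
\xrightarrow[n\to\infty]{} t\int_0^\infty \mathbf{g}(y,0)\,c_\alpha\,y^{-(\alpha+1)}\,dy .
\]
Two ingredients drive this: (i) the clone coordinate carries the $\alpha$-stable profile, since by Proposition \ref{colaaproxcmn} and the argument behind Lemma \ref{convmedidas} --- using $\mathbb{E}(\xi^{(cn)})=1-p(n)\to1$ --- the measures $r(n)\,\mathrm{Law}(\xi^{(cn)}/n)$ converge vaguely on $(0,\infty)$ to $c_\alpha y^{-(\alpha+1)}dy$; and (ii) the mutant coordinate of a jump is negligible, because conditionally on $\xi^{(+)}$ the variable $\xi^{(mn)}$ is binomial$(\xi^{(+)},p(n))$ and concentrates around $p(n)\xi^{(+)}$, so on the event producing an order-one jump in the clone coordinate (where $\xi^{(+)}\asymp n$) one has $\xi^{(mn)}/r(n)p(n)\asymp\xi^{(+)}/r(n)\asymp n/r(n)\to0$. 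This forces the limit jump measure onto $\{x_2=0\}$, producing the factor $\delta_0(dx_2)$; since $\mathbf{g}$ vanishes at the origin and has a limit at infinity, the mass near $0$ is controlled and $\mathbf{g}(y,\cdot)$ may be replaced by $\mathbf{g}(y,0)$.

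It then remains to treat the drift and the modified second characteristic. The second-coordinate drift converges to the coefficient $1$ of the limit drift $t$: the mean of the second coordinate equals $r(n)t\,\mathbb{E}(\xi^{(mn)})/r(n)p(n)=t\,\mathbb{E}(\xi^{(mn)})/p(n)=t$ by criticality ($\mathbb{E}(\xi^{(+)})=1$ yields $\mathbb{E}(\xi^{(mn)})=p(n)$), and with the vanishing of the second-coordinate jump measure and of $\tilde c^{n,22}_t$ this pins that coordinate to the deterministic drift $t$. The first-coordinate drift follows by combining the truncated mean with the compensator $-r(n)t/n$ and passing to the limit via the estimate in (i), giving $t\int(h(y)-y)c_\alpha y^{-(\alpha+1)}dy$, while $\tilde c^n_t\to\tilde c_t$ follows from the same vague convergence together with the boundedness of $h$ and the integrability of $h_ih_j$ near $0$. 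Once the three conditions hold, Theorem VII.3.4 of \cite{Shiryaev} yields (\ref{convppc}) towards $(X_t,t)$, and the convergence of the characteristics (\ref{chawrp}) to (\ref{chasta}) is precisely what has been checked.

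I expect the main obstacle to be the L\'evy--measure condition, and within it the genuinely two-dimensional point: simultaneously retaining the heavy-tailed $\alpha$-stable profile in the clone coordinate and collapsing the mutant coordinate onto $\{x_2=0\}$. This needs careful control of the joint law of $(\xi^{(cn)},\xi^{(mn)})$, whose coordinates are strongly dependent through $\xi^{(+)}$ and are rescaled by the different orders $n$ and $r(n)p(n)$, together with uniform control of the binomial fluctuations so that the crude bound $\xi^{(mn)}\le\xi^{(+)}$ is replaced by the correct typical size $p(n)\xi^{(+)}$.
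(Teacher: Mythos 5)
Your proposal is correct in its first part and essentially identical to the paper there: both of you read off the triple (\ref{chawrp}) from the compound Poisson structure of $\tilde{\mathbf{S}}^n_{N_{r(n)t}}$. For the convergence, however, you take a genuinely different route. The paper never verifies the three conditions of (\ref{condconvfun}) on the characteristics directly; instead it writes the characteristic exponent explicitly via the compound Poisson identity $\mathbb{E}(e^{\mathrm{i}\mathbf{u}\cdot\tilde{\mathbf{S}}^n_{N_{r(n)t}}})=e^{r(n)t(\psi_n(\lambda/n,\theta/(r(n)p(n)))-1)}$ and then uses Corollary \ref{corTLlrlclm}, i.e.\ the exact relation $\psi_n(\lambda,\theta)=\phi^+(-\log(1-\cdots))$ coming from the conditional binomial structure, to reduce everything to the one-dimensional law $\pi^+$ and to Lemma \ref{convmedidas}; the limit exponent comes out as $t\int(e^{\mathrm{i}\lambda y}-1-\mathrm{i}\lambda y)c_\alpha y^{-(\alpha+1)}dy+\mathrm{i}t\theta$, and the convergence of the characteristics is then \emph{deduced} from the weak convergence via Theorem VII.2.9 of \cite{Shiryaev} — the opposite direction of the equivalence from the one you invoke. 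What the paper's route buys is that the collapse of the mutant coordinate onto $\{x_2=0\}$ and the emergence of the deterministic drift $t$ happen automatically inside the integral transform (the $\theta$-dependence of the limit exponent is purely linear), so no two-dimensional vague convergence of the jump measures is ever needed. What your route buys is a more transparent probabilistic picture: the identification of which events produce macroscopic jumps ($\xi^{(+)}\asymp n$, hence $\xi^{(mn)}\approx p(n)\xi^{(+)}=O(1)$ and $\xi^{(mn)}/(r(n)p(n))\asymp n/r(n)\to0$ since $r(n)\in RV_\infty^{\alpha}$ with $\alpha>1$) and of why the limit Lévy measure carries the factor $\delta_0(dx_2)$. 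You correctly flag that the delicate step is the joint vague convergence of $r(n)\,\mathrm{Law}(b(n)\xi^{(n)})$; note that the crude bound $\xi^{(mn)}\le\xi^{(+)}$ is indeed useless there (it gives $\xi^{(+)}/(r(n)p(n))\asymp n^2/r(n)\to\infty$), but a conditional first-moment bound $\mathbb{E}[\xi^{(mn)}\mid\xi^{(+)}=k]=p(n)k$ plus Markov's inequality suffices, so your plan closes. Both arguments ultimately rest on the same equivalence theorem in \cite{Shiryaev}, and both are valid.
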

\begin{proof}[Proof of Lemma \ref{solVic}]
Note that for $\mathbf{u}=(\lambda,\theta)\in\mathbb{R}^2$,
\begin{equation}\label{expchappc} 
\mathbb{E}\left( e^{ \text{i} \mathbf{u} \cdot \tilde{ \mathbf{S} }^n_{N_{ r(n)t }} } \right) 
= e^{  r(n)t   \left(  \psi _n\left(  \frac{ \lambda } { n } , \frac{ \theta } { r(n)p(n) }  \right) -1 \right) } , \quad t\geq 0.
\end{equation}
Then the exponent in the righthand side of the previous equality can be written as follows
\begin{equation} \label{expsum0}
t \left[ \text{i} \int_{ (0,\infty) } \int_{ (0,\infty) }  \mathbf{u}  \cdot \mathbf{h}( b(n) \mathbf{x})r(n)  \pi(d\mathbf{x}) +  \int_{ (0,\infty) } \int_{ (0,\infty) } \left(  e^{ \text{i} \mathbf{u}  \cdot b(n) \mathbf{x} } -1 - \text{i} \mathbf{u} \cdot \mathbf{h}(b(n) \mathbf{x})  \right)  r(n)  \pi^{(n)}(d\mathbf{x}) \right], 
\end{equation} 
where $b(n) \mathbf{x}= (x_1/n,x_2/r(n)p(n))$, $\pi^{(n)}(d\mathbf{x}) = \mathbb{P}( \xi^{(cn)}\in dx_1, \xi^{(mn)}\in dx_2 )$. From here $\tilde{\mathbf{S}}_{N_{ r(n)t }}$ is infinitely divisible, also we can deduce that the characteristics of the process $\{\tilde{ \mathbf{S} }^n_{N_{r(n)t}} - (  r(n)t /n  ,0 ) : t\geq 0\}$ are given by (\ref{chawrp}). Thanks to Theorem II.3.11 of \cite{Shiryaev} this process is a L\'evy process and a semimartingale.

Besides, to get the convergence in (\ref{convppc}) we shall prove the convergence of the characteristic functions. This fact is verified using Corollary \ref{corTLlrlclm}, together with the fact that conditionally to $\xi^{(+)}=k$ the distribution of $\xi^{(cn)}$ is Binomial with parameter $(k,1-p(n))$, as well as the assumption that $\xi^+$ has mean $1$.
\begin{align*}
t &\int_{ (0,\infty) }  \left( e^{ - ( (1-p(n) )( 1- e^{\text{i} \lambda/n } ) - p(n)(1-e^{\text{i} \theta / r(n)p(n) } ) ) y} -1 - \text{i}\mathbf{u} \cdot \mathbf{h} \left(  \frac{y}{n}(1-p(n)) ,\frac{y}{r(n)p(n)}p(n)  \right) \right) r(n) \pi^+(dy) \\
& + t\text{i} \int_{ (0,\infty) } \left( \mathbf{u} \cdot \mathbf{h} \left(  \frac{y}{n}(1-p(n)), \frac{y}{r(n)p(n)}p(n) \right) -\left( \frac{\lambda}{n}(1-p(n)) +  \frac{\theta}{r(n)p(n)} p(n) \right)y \right)r(n) \pi^+(dy) \\
& + \text{i} \left( \frac{\lambda}{n}(1-p(n)) +  \frac{\theta}{r(n)p(n)} p(n)  \right) r(n)t.
\end{align*}
Then by the continuity of $\mathbf{h}$ and the assumptions (\ref{hip}) and (\ref{rpro}), the previous display becomes 
\begin{align*}
\thicksim t\int_{ (0,\infty) } & \left( e^{ - ( (1-p(n) )( 1- e^{\text{i} \lambda/n } ) - p(n)(1-e^{\text{i} \theta / r(n)p(n) } ) ) y} -1 - \text{i} \lambda h \left(  \frac{y}{n} \right) \right) r(n)\pi^+(dy) \\
& \quad \quad  \quad  +t \text{i} \int_{ (0,\infty) } \left( \lambda h \left(  \frac{y}{n} \right) - \left( \frac{\lambda}{n}(1-p(n)) +  \frac{\theta}{r(n)p(n)} p(n) \right)y \right) r(n)\pi^+(dy) \\
&  \quad \quad  \quad + \text{i}  \left( \frac{\lambda}{n}(1-p(n)) +  \frac{\theta}{r(n)p(n)} p(n) \right) r(n)t.
\end{align*}
where $h$ is the truncation function from $\mathbb{R}$ to $\mathbb{R}$ obtained as projection of $\mathbf{h}$ in the second coordinate, that is,
\begin{align}\label{aprofin}
\thicksim t\int_{ (0,\infty) } & \left( e^{ - ( (1-p(n) )( 1- e^{\text{i} \lambda/n } ) - p(n)(1-e^{\text{i} \theta / r(n)p(n) } ) ) ny} -1 - \text{i}\lambda h (  y  ) \right)r(n) \pi^+(ndy) \nonumber \\
& + t\text{i} \int_{ (0,\infty) } \left( \lambda h( y )- \left(\lambda(1-p(n)) +  \frac{\theta}{r(n)p(n)} n p(n) \right)y \right)r(n) \pi^+(ndy)  \nonumber \\
& + \text{i}  \left( \frac{\lambda}{n}(1-p(n)) +  \frac{\theta}{r(n)p(n)} p(n) \right) r(n)t.
\end{align}
Finally we have the convergence  
\begin{equation}\label{autreconv}
\mathbb{E}\left( e^{ \text{i}  \mathbf{u} \cdot (\tilde{ \mathbf{S} }^n_{N_{ r(n)t }} -  (r(n) t /n,0)  ) } \right)\xrightarrow [n\rightarrow\infty]{} 
e^{t( \int_{ (0,\infty) } \left( e^{\text{i} \lambda y} -1- \text{i} \lambda y  \right)c_\alpha y^{-(\alpha+1)}dy) +\text{i} t \theta},
\end{equation}
where $c_\alpha$ is a constant depending on $\alpha$ that appears in Lemma  \ref{convmedidas}. Indeed, this is a consequence of the arguments in this  Lemma together with the hypothesis (\ref{hip})
\begin{align*} 
\int_{ (0,\infty) } & \left( e^{ - ( (1-p(n) )( 1- e^{\text{i} \lambda/n } ) - p(n)(1-e^{\text{i} \theta / r(n)p(n) } ) ) ny} -1 - \text{i}  \lambda h(  y ) \right) r(n)\pi^+(ndy) \\
& \xrightarrow [n\rightarrow\infty]{} t \int_{ (0,\infty) } \left( e^{\text{i} \lambda y} -1- \text{i}  \lambda h( y )  \right)c_\alpha y^{-(\alpha+1)}dy,
\end{align*}
while the second adding converges to $$\text{i} t \int_{(0,\infty)} \lambda (h(y)-y)  c_\alpha y^{-(\alpha+1)}dy. $$ To end we observe that $r(n)\in RV_{\infty}^\alpha$ with $\alpha\in(1,2]$ and $p(n)\thicksim cn^{-1}$ implies that
$$
\frac{r(n) p(n)}{n } \xrightarrow [n\rightarrow\infty]{} 0.
$$
Then 
$$
\text{i}  \left( \frac{\lambda}{n}(1-p(n)) +  \frac{\theta}{r(n)p(n)} p(n) \right) r(n)t - \text{i}  \frac{\lambda }{n}r(n)t,
$$
converges to $\text{i}\theta t$. From (\ref{autreconv}) the characteristic of $(X_t,t)$ are given by (\ref{chasta}). As a consequence of Theorem VII.2.9 of \cite{Shiryaev} we have the convergence of the characteristics. Finally the characteristic exponent of $X_t$ is $c_\alpha \lambda^\alpha$.\end{proof} 

We have now all the elements to prove Proposition \ref{convfunsta}.

\begin{proof}[Proof of Proposition \ref{convfunsta}]
Thanks to Theorem 2.3.11 of \cite{Shiryaev}, $\tilde{\mathbf{S}}_{\lfloor r(n)t \rfloor}^{(n)}$ is a semimartingale with characteristics relatives to $\mathbf{h}:\mathbb{R}^2\rightarrow \mathbb{R}^2$, given by
\begin{align*}
&B_t^n  = \lfloor r(n)t \rfloor
\mathbb{E}\left[ \mathbf{h}\left(  b(n)\xi^{(n)}  \right)   \right]  - (\lfloor r(n)t \rfloor /n,0) \\
&C_t^{n,ij}  =0, \quad \tilde{C}_t^{n,ij}= \lfloor r(n)t \rfloor \left( \mathbb{E} \left[  h_i\left(  b(n)\xi^{(n)}  \right) h_j\left(  b(n)\xi^{(n)} \right) \right]- \mathbb{E}\left[ h_i\left(b(n)\xi^{(n)} \right) \right] \mathbb{E}\left[h_j( b(n)\xi^{(n)} ) \right] \right) \quad i,j=1,2.  \\
&\mathbf{g} *\nu^n_t=  \lfloor r(n)t \rfloor \mathbb{E}\left( \mathbf{g}\left( b(n)\xi^{(n)}   \right)  \right) \quad \text{for } \mathbf{g}\geq 0 \text{ Borel},
\end{align*}
Now only remains to verify the conditions (\ref{condconvfun}) where the limit characteristics agree with these in (\ref{chasta}). In this direction we recall that $\mathbf{b}^n_t   =  r(n)t \mathbb{E}\left[   \mathbf{h}\left(  b(n) \xi^{(n)}  \right)   \right] - (  r(n)t /n  ,0 )$ and observe
$$
|B_s^n -\mathbf{b}_s |  \leq \left| \lfloor  r(n) s\rfloor-r(n)s \right| \mathbb{E}\left[ \mathbf{h}\left(  b(n)\xi^{(n)}  \right)   \right] + | ( r(n)s/n,0 ) - (\lfloor r(n)s \rfloor/n,0)  | +\left| \mathbf{b}^n_s  -\mathbf{b}_s  \right|.
$$
Then using the properties of the floor function we obtain
$$
|B_s^n -\mathbf{b}_s|\leq \mathbb{E}\left[ \mathbf{h}\left(  b(n)\xi^{(n)}  \right)   \right]
+  | (s/n,0) |  + \left| \mathbf{b}^n_s  -\mathbf{b}_s  \right|.
$$
Thus by the convergence of $\mathbf{b}_t^n$ established in the previous lemma, together with $r(n)\to\infty$
$$
\sup_{s\leq t}|B_s^n -\mathbf{b}_s|
\leq 
\mathbb{E}\left[ \mathbf{h}\left(  b(n)\xi^{(n)}  \right)  \right] +  | (t/n,0) |  +  \left| \mathbf{b}^n_t  -\mathbf{b}_t  \right|
\xrightarrow[n\to\infty]{}0,
$$
hence we have the first condition in (\ref{condconvfun}). In order to determine the second one, let  $b_t^{n,i}$ be the $i$-th coordinate of $\mathbf{b}^n_t$, $i=1,2$. Once more, applying the properties of the floor function we have 

\begin{align*}
 \left(1-\frac{1}{r(n)t}\right)\tilde{c}_t^{n,ij}  - \frac{1}{r(n)t} b_t^{n,i}b_t^{n,j} - & \frac{r(n)t}{n} \mathbb{E}\left[ h_2\left(  b(n)\xi^{(n)}  \right)  \right] \\
 & \leq \tilde{C}_t^{n,ij}  \leq \tilde{c}_t^{n,ij} + \frac{ 1- r(n)t }{ (r(n)t)^2  } b_t^{n,i}b_t^{n,j}  +\frac{1-r(n)t}{n} \mathbb{E}\left[ h_2\left(  b(n)\xi^{(n)}  \right)  \right]. 
\end{align*}
Also
$$
\left(1-\frac{1}{r(n)t}\right)\tilde{c}_t^{n,22} - \frac{ 1 }{ r(n)t  } (b_t^{n,2})^2  \leq \tilde{C}_t^{n,22}\leq \tilde{c}_t^{n,22} +\frac{ 1- r(n)t }{ (r(n)t)^2  }  (b_t^{n,2})^2,
$$
and
$$
\left(1-\frac{1}{r(n)t}\right) \tilde{c}_t^{n,11} - \frac{1}{ r(n)t }  (b_t^{n,1})^2   \leq \tilde{C}_t^{n,11}\leq \tilde{c}_t^{n,11} + \frac{ 1- r(n)t }{ (r(n)t)^2  } (b_t^{n,1})^2 +  b_t^{n,1} + \frac{1}{n^2}[ r(n)-1].
$$
As consequence of the convergence $\mathbf{b}^n_t\rightarrow \mathbf{b}_t$, $b^{n,i}_t$ converges for $i=1,2$. Then the above inequalities impliy $\tilde{C}_t^{n,ij} \rightarrow \tilde{c}_t^{ij}$, because $r(n)\in RV_{\infty}^\alpha$ and $\alpha\in(1,2)$. It is easily prooved that also $\mathbf{g} *\nu^n_t \rightarrow \mathbf{g} * F_t$ for all $g$ using that  $F^n_t=  r(n)t \pi(d\mathbf{x})$ converges to $F_t$, as we proved in the previous lemma, together with properties of the floor function.
\end{proof}


\bibliographystyle{abbrv}
\bibliography{bibliomath.bib}

\end{document}